\newtheorem{Theorem}{Theorem}[section]
\newtheorem{Definition}{Definition}[section]
\newtheorem{Lemma}{Lemma}[section]
\begin{document}

\title{Implicit monotone difference methods
for scalar conservation laws with source terms}

\author{Michael Breu\ss{} \thanks{Institute for Mathematics, 
Brandenburg University of Technology Cottbus-Senftenberg, Platz der Deutschen Einheit 1, 03046 Cottbus, Germany ({\tt breuss@b-tu.de})} 
\and Andreas Kleefeld \thanks{Forschungszentrum J\"ulich GmbH, Institute for Advanced Simulation, J\"ulich Supercomputing Centre,
Wilhelm-Johnen-Stra\ss{}e, 52425 J\"ulich, Germany ({\tt a.kleefeld@fz-juelich.de})}}

\maketitle

\begin{abstract}
In this article, a concept of implicit 
methods for scalar conservation laws in one or 
more spatial dimensions allowing also for
source terms of various types is presented. This material is a significant extension 
of previous work of the first author \cite{breuss02}. Implicit notions are developed that are
centered around a monotonicity criterion. 
We demonstrate a connection between a numerical scheme and a discrete entropy inequality, 
which is based on a classical approach by Crandall and Majda.
Additionally, three implicit methods are investigated using 
the developed notions. Next, we conduct a convergence proof which is not based on a classical compactness argument.
Finally, the theoretical results are confirmed by various numerical tests.
\end{abstract}

\section{Introduction}
\label{breuss-section-1}

This article deals with the entropy solution of hyperbolic conservation laws
in the sense of Kru\v{z}kov.
Specifically, we allow the numerical methods
to act within the two most general
settings, that is (i) smooth fluxes together with
non-linear sources and (ii) continuous fluxes and
sources depending both on space and time.
The corresponding analytical existence and uniqueness
results for these cases are given
within a number of papers of Kru\v{z}kov and
his co-workers, see for example \cite{benkru96,kruhil74,krupan91}
and the references therein. 

This paper represents a significant extension 
of the work by Breu\ss{} \cite{breuss02}, where implicit methods are
considered for homogeneous scalar equations in one dimension.
To our knowledge, the combination of the developed
concept of implicit methods
for both mentioned general problems
together with the application of corresponding schemes
on problems belonging to both classes is new.
Accordingly, the main contribution of this paper
is the extension of the rigorously validated range
of applicability of finite difference methods.

The encountered difficulties for the described task have already been discussed 
in the introduction of Breu\ss{} \cite{breuss02}. Summarizing, information that is propagated 
with infinite speed may take place provided that a flux function of a nonlinear 
conservation law is not Lipschitz continuous as it is accepted in setting (ii). A detailed 
one-dimensional example is given by Kru\v{z}kov and Panov in \cite{krupan91} (see also \cite{breuss02}), 
where the exact solution is known. This example shows that 
a rarefaction wave extending to infinity after 
arbitrarily small time takes place. Additionally, this example has a pole for $u=0$ and the solution
domain is infinite although an initial condition with compact support is given.

Two direct conclusions emerge from this example. At first,
the Courant-Friedrichs-Lewy (CFL) number would be effectively zero provided an 
explicit scheme is used. Additionally, the Kuznetsov approach for convergence \cite{kuznetsov76} is not employable, because it
relies on a suitable error estimation which explicitly 
uses the Lipschitz continuity of the flux and 
the boundedness of the domain of the solution. At second, a variety of other well-established approaches for 
the convergence of numerical methods are not applicable. For instance, one approach is based on Helly's theorem
which uses the compactness
of the function space of bounded variations (BV). This is employed in the convergence proofs of Total Variation Diminishing (TVD)
methods. But using the BV concept, this function space is only compact (see LeVeque \cite{leveque92}) provided 
a fixed compact space-time-domain
containing the solution is used. Hence, the
compactness property of this function space is 
unfortunately not applicable in the discussed case. The same is true for explicit monotone methods as it is the case in the fundamental work of 
Crandall and Majda \cite{cramaj80}. They used the properties
of this function space to obtain a compactness
argument. Especially, in that work the sources are
also assumed to be essentially bounded and
BV-stable integrable functions depending on space and time.
In another important approach introduced by DiPerna \cite{diperna85} measure valued
solutions are used, where the
compactness of the domain of the solution both
in space and time is assumed which has already been discussed in \cite{coqflo93}. 

From this discussion it should be clear that we need to employ implicit schemes such that
the convergence strategy is different from the before mentioned 
ones. Therefore, we use the monotonicity of 
implicit methods to obtain a discrete comparison principle. This suffices to 
guarantee the convergence of such methods to the entropy solution
in the sense of Kru\v{z}kov.

Therefore, in this paper the monotonicity property of an implicit scheme is investigated (see \cite{godrav91,leveque92} for the discussion for explicit schemes).
Hence, it is indispensable to avoid any derivative of the flux. As we will show, we construct a monotonicity notion that is based 
on a comparison of data sets using an induction principle. 

The application of this monotonicity notion on three
implicit variations of well-known monotone explicit schemes
is investigated. One would expect, that implicit
schemes are generally capable to capture all effects described
by a conservation law even for continuous fluxes
and general sources, because 
in the implicit case the numerical characteristics include all the
characteristics of the differential equation.
However, while our monotonicity 
investigations of an implicit upwind scheme and an
implicit Godunov-type method yield the expected
results, the investigation of the implicit variation of
the traditional Lax-Friedrichs scheme shows, that the scheme
is only monotone even in the full implicit case
if the flux is Lipschitz continuous. Furthermore, the restriction
on the admissible Lipschitz constant of the flux is not
depending on the number of spatial dimensions.
This interesting result which is new to our
knowledge is explored via a simple experiment 
using a two-dimensional linear advection equation.

This article consists of five additional sections. In Section \ref{breuss-section-2},
we briefly review the
two most general theoretical results on solutions of
conservation laws available
to our knowledge, namely the existence and
uniqueness results established in \cite{benkru96} 
and \cite{kruzhkov70}.
In the next section, we introduce the notions for implicit methods that are
centered around monotonicity. 
The given detailed convergence proof is an extension of the strategy given in Breu\ss{} \cite{breuss02}.
Section \ref{breuss-section-4} presents the investigation of three numerical methods with respect to their
monotonicity. Additionally, for these
methods the proofs of convergence towards 
the entropy solution are given.
Finally, we present the results
of various numerical tests in Section \ref{breuss-section-5} followed by a short summary and conclusive remarks in Section \ref{breuss-section-6}.

\section{The setting}
\label{breuss-section-2}

Within this section, we define the two mathematical
scenarios of interest, i.e. we briefly review the type
of problems considered in \cite{benkru96} and \cite{kruzhkov70}.

\subsection*{Scenario 1}
\label{breuss-subsection-2-1}

The Cauchy problem under consideration is
\begin{eqnarray}
\frac{\partial}{\partial t} u \left( \mathbf{x}, t \right)
+
\sum_{l=1}^d
\frac{\partial}{\partial x_l}
f_l \left( u \left( \mathbf{x}, t \right) \right)
\, = \, q
& \, \textrm{ on } \, & \mathbb{R}^d \times \left(0,T\right)
\,,
\label{breuss-1}\\
u \left( \mathbf{x}, 0 \right)
\, = \, u_0 \left( \mathbf{x} \right)
& \, \textrm{ on } \, & \mathbb{R}^d
\,,
\label{breuss-2}
\end{eqnarray}
where $T$ is a fixed positive number. 
Concerning the flux functions we generally assume
\begin{equation}
f_l(u) \in C \left( \mathbb{R}; \, \mathbb{R} \right)
\,, \quad
l=1, \, \ldots, \, d
\,.
\label{breuss-3}
\end{equation}
In order to apply 
the uniqueness theorem given in \cite{benkru96}, the fluxes
are additionally supposed to satisfy the growth conditions
\begin{displaymath}
\left|
f_l(u)-f_l(\hat u)
\right|
\leq
\omega_l (u-\hat u)
\;
\textrm{ a.e. for }\;
u \geq \hat u \;
\textrm{ and for }\;
l=1, \ldots, d
\,,
\end{displaymath}
with the moduli of continuity $\omega_l$ featuring
\begin{displaymath}
\omega_1(0)= \ldots = \omega_d(0)=0
\quad \textrm{and} \quad
\liminf_{r \to 0}
r^{1-d} \prod_{l=1}^d \omega_l(r) < \infty
\,.
\end{displaymath}
Note that these conditions on the fluxes are more general
than the usually assumed Lipschitz continuity.
The initial condition shall satisfy
\begin{equation}
u_0 \in L^\infty_{loc} \left( \mathbb{R}^d; \, \mathbb{R} \right)
\,,
\label{breuss-4}
\end{equation}
and for the source term we consider
\begin{equation}
q \equiv q \left( \mathbf{x}, t \right)
\in L^1_{loc} \left( \mathbb{R}^d \times (0,T); \, \mathbb{R} \right)
\,,
\label{breuss-5}
\end{equation}
\begin{equation}
q (\cdot,t) \in L^\infty \left( \mathbb{R}^d; \, \mathbb{R} \right)
\; \textrm{ for a.e. } \; t \in (0,T)
\; \textrm{ and } \;
\int_0^T \| q(\cdot, t) \|_\infty \, \mathrm{d}t<\infty
\,.
\label{breuss-6}
\end{equation}
Under the conditions (\ref{breuss-3}) -- (\ref{breuss-6}),
B\'enilan and Kru\v{z}kov \cite{benkru96}
proved uniqueness of the entropy solution of 
(\ref{breuss-1}) -- (\ref{breuss-2}).

Because the solution of the Cauchy problem
generally develops discontinuities even if 
$u_0$ is smooth, it is often considered in its weak form, i.e.
\begin{eqnarray}
\lefteqn{
\int_0^\infty
\int_{\mathbb{R}^d}
\left[
u \left( \mathbf{x},t \right)
\phi_t \left( \mathbf{x},t \right)
+
\sum_{l=1}^d
f_l(u \left( \mathbf{x},t \right) )
\frac{\partial}{\partial x_l}
\phi \left( \mathbf{x},t \right)
\right] \,
\mathrm{d}\mathbf{x}\,\mathrm{d}t
}\nonumber\\
\quad \quad & = &
-\int_{\mathbb{R}^d}
u_0 \left( \mathbf{x}\right)
\phi_0 \left( \mathbf{x} \right) \, \mathrm{d}\mathbf{x}
-
\int_0^\infty
\int_{\mathbb{R}^d}
q \left( \mathbf{x},t \right)
\phi \left( \mathbf{x},t \right) \, \mathrm{d}\mathbf{x}\,\mathrm{d}t
\; \; \;
\forall \phi \in C_0^\infty \left( \mathbb{R}^{d+1}; 
\, \mathbb{R}\right)
\,.
\label{breuss-7}
\end{eqnarray}
It is well-known that weak solutions are in general not unique, 
see for example \cite{leveque92} and the references therein.
In order to ensure uniqueness, a so-called entropy condition
has to be introduced. The already mentioned entropy condition
due to Kru\v{z}kov \cite{benkru96}
which guarantees the uniqueness of a solution of 
(\ref{breuss-1}) -- (\ref{breuss-2}) takes the form
\begin{eqnarray}
\lefteqn{
\int_0^\infty
\int_{\mathbb{R}^d}
\Biggl[
\mathopen{|}
u \left( \mathbf{x},t \right)
 - k
\mathclose{|}
\phi_t \left( \mathbf{x},t\right)
\Biggr. }
\nonumber\\
\lefteqn{ 
\quad \quad
\Biggl.
+
\sum_{l=1}^d
\mathrm{sgn} \left( u\left( \mathbf{x},t \right)
-k \right)
\left[ f_l (u \left( \mathbf{x},t \right) )
-f_l (k) \right]
\frac{\partial}{\partial x_l}
\phi \left( \mathbf{x},t \right)
\Biggr] \,
\mathrm{d}\mathbf{x}\,\mathrm{d}t
}
\nonumber\\
& \geq &
-
\int_{\mathbb{R}^d}
\mathopen{|}
u_0 \left( \mathbf{x} \right)
 - k
\mathclose{|}
\phi_0 \left( \mathbf{x} \right)
\, \mathrm{d}\mathbf{x} \nonumber\\
&& \quad
-
\int_0^\infty
\int_{\mathbb{R}^d}
\mathrm{sgn} \left[ u \left( \mathbf{x},t \right)
-k \right]
q \left( \mathbf{x},t \right)
\phi \left( \mathbf{x},t \right)
\, \mathrm{d}\mathbf{x}\,\mathrm{d}t \label{breuss-8}\\
&&
\textrm{for all } 
\phi \in C_0^\infty \left( \mathbb{R}^{d+1}; \, \mathbb{R} \right)
\textrm{ with }
\phi \geq 0
\textrm{ and for all } k \in \mathbb{R} 
\,.
\nonumber
\end{eqnarray}

\subsection*{Scenario 2}
\label{breuss-subsection-2-2}

The {\it Scenario 2} deals with the Cauchy problem
\begin{eqnarray}
\frac{\partial}{\partial t} u \left( \mathbf{x}, t \right)
+
\sum_{l=1}^d
\frac{d}{d x_l}
f_l \left( \mathbf{x} ,t, u \left( \mathbf{x}, t \right) \right)
\, = \, q
& \, \textrm{ on } \, & \mathbb{R}^d \times \left(0,T\right)
\,,
\label{breuss-9}\\
u \left( \mathbf{x}, 0 \right)
\, = \, u_0 \left( \mathbf{x} \right)
& \, \textrm{ on } \, & \mathbb{R}^d
\,,
\label{breuss-10}
\end{eqnarray}
where $T$ is a fixed positive number
and with
\begin{displaymath}
\frac{d}{d x_l} f_l 
\equiv
f_{l_{x_l}}+f_{l_u}u_{x_l}
\,.
\end{displaymath} 
In comparison to {\it Scenario 1},
we impose different assumptions on the fluxes and 
the source terms. As in (\ref{breuss-4}),
there is no particular condition imposed
on the initial data.
The flux functions are now assumed to satisfy
\begin{equation}
f_l(\mathbf{x},t,u) 
\in
C^1 \left( 
\mathbb{R}^d \times \mathbb{R}_+ \times \mathbb{R}; \, \mathbb{R}\right)
\,, \quad
l=1, \, \ldots, \, d
\,.
\label{breuss-11}
\end{equation}
As source terms we consider functions
\begin{equation}
q \equiv q \left( \mathbf{x}, t , u(\mathbf{x},t) \right)
\in C^1
\left(
\mathbb{R}^d \times \mathbb{R}_+ \times \mathbb{R}; \, \mathbb{R}
\right)
\,.
\label{breuss-12}
\end{equation}
Under the conditions (\ref{breuss-11}) and
(\ref{breuss-12}), Kru\v{z}kov \cite{kruzhkov70}
proved the uniqueness of the entropy solution of
(\ref{breuss-9}) -- (\ref{breuss-10}).
Comparing the weak formulation of this problem with
the weak formulation (\ref{breuss-7}), we have to substitute
\begin{equation}
\int_0^\infty
\int_{\mathbb{R}^d}
q \left( \mathbf{x},t , u(\mathbf{x},t) \right)
\phi \left( \mathbf{x},t \right) \, \mathrm{d}\mathbf{x}\,\mathrm{d}t
\quad
\textrm{for}
\quad
\int_0^\infty
\int_{\mathbb{R}^d}
q \left( \mathbf{x},t\right)
\phi \left( \mathbf{x},t \right) \, \mathrm{d}\mathbf{x}\,\mathrm{d}t
\,.
\label{breuss-13}
\end{equation}
The assumptions (\ref{breuss-11}) and (\ref{breuss-12}) yield the form of the
Kru\v{z}kov entropy condition as
\begin{eqnarray}
\lefteqn{
\int_0^\infty
\int_{\mathbb{R}^d}
\Biggl[
\mathopen{|}
u \left( \mathbf{x},t \right)
 - k
\mathclose{|}
\phi_t \left( \mathbf{x},t\right)
\Biggr. }
\nonumber\\
\lefteqn{ 
\quad
\Biggl.
+
\sum_{l=1}^d
\mathrm{sgn} \left( u\left( \mathbf{x},t \right)
-k \right)
\left[ 
f_l (\mathbf{x},t, u \left( \mathbf{x},t \right) )
-f_l ( \mathbf{x},t,k) 
\right]
\frac{\partial}{\partial x_l}
\phi \left( \mathbf{x},t \right)
\Biggr] \,
\mathrm{d}\mathbf{x}\,\mathrm{d}t
}
\nonumber\\
& \quad \quad \geq &
-
\int_{\mathbb{R}^d}
\mathopen{|}
u_0 \left( \mathbf{x} \right)
 - k
\mathclose{|}
\phi_0 \left( \mathbf{x} \right)
\, \mathrm{d}\mathbf{x} \nonumber\\
&& \quad
-
\int_0^\infty
\int_{\mathbb{R}^d}
\sum_{l=1}^d
\mathrm{sgn} \left[ u \left( \mathbf{x},t \right)
-k \right]
\left[
q \left( \mathbf{x},t, u(\mathbf{x},t) \right)
-
f_{l_{x_l}} \left( \mathbf{x}, t, k \right)
\right]
\phi \left( \mathbf{x},t \right)
\, \mathrm{d}\mathbf{x}\,\mathrm{d}t 
\label{breuss-14} \\
&&
\textrm{for all } 
\phi \in C_0^\infty \left( \mathbb{R}^{d+1}; \, \mathbb{R} \right)
\textrm{ with }
\phi \geq 0
\textrm{ and for all } k \in \mathbb{R} 
\,.
\nonumber
\end{eqnarray}

\section{Numerical methods}
\label{breuss-section-3}

We first describe the implicit notions, followed
by the proofs of the involved
Lemmas and Theorems in a separate section.
For the sake of brevity,
we discuss only {\it Scenario 1} in detail, since 
the techniques which have to be used with respect to 
{\it Scenario 2} are identical.
The proper conceptual extension to {\it Scenario 2}
is described within additional remarks.

\subsection{A concept of implicit methods}
\label{breuss-subsection-3-1} 

Since we want to describe numerical methods
in $d$ spatial dimensions, we spend some effort
on a general notation.

Because we investigate finite difference methods, we have to
introduce grid points. For simplicity, we consider grids which are
equidistant with respect to the individual
$d$ spatial dimensions as well as to time,
i.e. we employ grid spacings $\Delta x_l$ corresponding
to the space dimensions $l=1, \ldots, d$,
and $\Delta t$ corresponding to time.

Since this results in a countable number of grid points, we
introduce a {\it linear numbering} $J$ of the spatial grid points
\begin{displaymath}
J= \left\{
0,1,2, \ldots \right\}
\,.
\end{displaymath}
We also define a bijective mapping
\begin{eqnarray}
\tilde J \quad : 
\quad J & \longrightarrow & \mathbb{R}^d \nonumber\\
i & \longrightarrow & 
\left(
i_1 \Delta x_1, \,
i_2 \Delta x_2, \,
\ldots, \,
i_d \Delta x_d 
\right)^\mathrm{T} 
\quad
\textrm{with}
\quad
\left(
i_1, \, i_2, \, \ldots, \, i_d 
\right)^\mathrm{T}
\in 
\mathbb{Z}^d
\,.
\nonumber
\end{eqnarray}
In order to describe the indices within the stencil of
a numerical method, we define the index $i \pm \delta l$ via
\begin{displaymath}
i \pm \delta l
\; 
\stackrel{\displaystyle{\tilde J}}{\longrightarrow}
\;
\left( 
i_1 \Delta x_1, \,
i_2 \Delta x_2, \,
\ldots, \,
\left(i_l \pm 1 \right) \Delta x_l, \,
\ldots, \,
i_d \Delta x_d
\right)^\mathrm{T}
\,.
\end{displaymath}
Let $u_j^k$ and $q_j^k$ denote the value of the numerical solution
and the value of the source term
at the point with the index $j \in J$ at the time level
$k \Delta t$, respectively.
With these notations, we consider {\it conservative}
implicit methods in the form
\begin{equation}
u_j^{n+1}
=
u_j^n
-
\sum_{l=1}^d
\frac{\Delta t}{\Delta x_l}
\left\{
g_l \left( u_j^{n+1}, \, u_{j + \delta l}^{n+1} \right)
-
g_l \left( u_{j-\delta l}^{n+1}, \, u_j^{n+1} \right)
\right\}
+
\Delta t q_j^{n+1}
\,.
\label{breuss-15}
\end{equation}
We assume that the numerical flux functions $g_l$ introduced
in (\ref{breuss-15}) are {\it consistent}, i.e.
\begin{displaymath} 
g_l(v, \, v) = f_l(v) \textrm{ holds for all } v \in \mathbb{R}
\textrm{ and for all } l=1, \, \ldots, \, d
\,.
\end{displaymath}
In the case of {\it Scenario 2}, we simply add
arguments $(\mathbf{x}_j,t^{n+1})$ within the fluxes;
we will not do this explicitly in the following.

The key to nonlinear stability is the notion
of monotonicity.\vspace{1.0ex}

\begin{Definition}[{\rm Monotonicity}]
\label{definition-breuss-1}
Let two data sequences
\begin{displaymath}
v^n = \left\{ v_j^n \right\}_{j \in J}
\quad \textrm{and} \quad
w^n = \left\{ w_j^n \right\}_{j \in J}
\end{displaymath}
be given. Let the investigated consistent and conservative
numerical method produce 
new sequences of data $v^{n+1}$ and $w^{n+1}$
from the given data $v^n$ and $w^n$, respectively.
Then the numerical method is monotone iff
the implication
\begin{equation}
v^n \geq w^n \quad
\Rightarrow \quad
v^{n+1} \geq w^{n+1}
\label{breuss-16}
\end{equation}
holds in the sense of the comparison
of components.\vspace{1.0ex}\\
\end{Definition}
It is useful to define $H$ and $\tilde H_l$ 
using $\underline{d}=\left\{ 1, \, \ldots, \, d \right\}$ via
\begin{eqnarray}
u_j^{n+1}
& = &
H \left( 
l \in \underline{d}, \,
u_{j-\delta l}^{n+1}, \,
u_j^{n+1}, \,
u_{j+\delta l}^{n+1}, \,
u_j^n \right) \nonumber\\
& = &
u_j^n
-
\sum_{l=1}^d
\frac{\Delta t}{\Delta x_l}
\left\{
g_l \left( u_j^{n+1}, \, u_{j + \delta l}^{n+1} \right)
-
g_l \left( u_{j-\delta l}^{n+1}, \, u_j^{n+1} \right)
\right\}
+
\Delta t q_j^{n+1}
\nonumber\\
& = &
u_j^n
+
\sum_{l=1}^d
\tilde H_l 
\left( u_{j-\delta l}^{n+1}, \, u_j^{n+1},
\, u_{j + \delta l}^{n+1} \right)
+
\Delta t q_j^{n+1}
\,.
\label{breuss-17}
\end{eqnarray}

\begin{Theorem}[{\rm Monotonicity of implicit methods}]
\label{theorem-breuss-1}
Let $a$, $b$ and $c$ be arbitrarily chosen
but fixed real numbers.
A consistent and conservative implicit method
of type (\ref{breuss-15}) 
is monotone iff  
for all spatial dimensions 
$l \in \left\{ 1, \, \ldots, \, d \right\}$ holds
\begin{eqnarray}
&&
\tilde H_l \left( a + \Delta a,b,c \right)
\quad \geq \quad
\tilde H_l \left( a,b,c \right)
\quad  \forall \, \Delta a \geq 0
\,,
\label{breuss-18}\\
&&
\tilde H_l \left( a,b,c+ \Delta c \right)
\quad \geq \quad
\tilde H_l \left( a,b,c \right)
\quad \forall \, \Delta c \geq 0
\,.
\label{breuss-19}
\end{eqnarray}
\end{Theorem}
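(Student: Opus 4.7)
The theorem is a characterization, so the plan is to prove each direction in turn. A useful preliminary observation is that conditions (\ref{breuss-18}) and (\ref{breuss-19}) are equivalent to the classical monotone numerical-flux inequalities on $g_l$: since $\tilde H_l(a,b,c) = -(\Delta t/\Delta x_l)[g_l(b,c)-g_l(a,b)]$, the first condition says $g_l$ is non-decreasing in its first argument and the second that it is non-increasing in its second. This reformulation is the workhorse for both implications.

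For the sufficiency direction, my plan is to set up a Jacobi-type iteration and apply induction on the iterate number, which I read as the \emph{induction principle} referenced in the introduction. Freezing the neighbour values $u_{j\pm\delta l}^{n+1,k}$ in (\ref{breuss-15}) reduces the $(k+1)$-iterate to a scalar equation for each $u_j^{n+1,k+1}$; the coefficient of the unknown on the left-hand side is at least $1$ by the sign conditions on $g_l$, so the scalar solve is uniquely solvable, and the implicit function theorem combined with (\ref{breuss-18})--(\ref{breuss-19}) shows the solution is non-decreasing in $u_j^n$ and in both frozen neighbours. Starting the iterations for $v$ and $w$ from pairwise-ordered initial guesses and using $v^n\geq w^n$, induction on $k$ preserves $v^{n+1,k}\geq w^{n+1,k}$, and passing to the limit (obtained under a CFL-type smallness ensuring contraction) yields (\ref{breuss-16}).

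For the necessity direction, my plan is a contrapositive test-data construction. Assume (\ref{breuss-18}) fails for some $l$, $a,b,c$ and $\Delta a\geq 0$. Pick $(n+1)$-level sequences $v^{n+1},w^{n+1}$ that agree everywhere except at $j-\delta l$, where $v^{n+1}_{j-\delta l}=a+\Delta a$ and $w^{n+1}_{j-\delta l}=a$, and set $v^{n+1}_j=w^{n+1}_j=b$, $v^{n+1}_{j+\delta l}=w^{n+1}_{j+\delta l}=c$. Since (\ref{breuss-15}) determines $u^n$ from $u^{n+1}$ by direct substitution, the corresponding $v^n,w^n$ are explicitly computable, and evaluation at the target index $j$ yields $v^n_j-w^n_j=\tilde H_l(a,b,c)-\tilde H_l(a+\Delta a,b,c)>0$ under the failure hypothesis. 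Supplementing the construction with an appropriate far-field background so that $v^n\geq w^n$ holds pointwise, the monotonicity hypothesis forces $v^{n+1}\geq w^{n+1}$ throughout, which is inconsistent with the sign detected at a suitable neighbour of $j$. Condition (\ref{breuss-19}) is handled symmetrically.

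The main obstacle is maintaining sign-coherence in the necessity direction: because the implicit stencil propagates a single $(n+1)$-level perturbation to several $n$-level nodes, the naive ``perturb one index'' construction must be supplemented with auxiliary adjustments so that the resulting $v^n$ dominates $w^n$ componentwise. My plan is to absorb these scattered perturbations into a sufficiently flat background state; the failure of (\ref{breuss-18}) at $(a,b,c)$ is preserved under small translations by continuity, so the background state can be tuned to enforce the pointwise ordering of $v^n$ and $w^n$ without erasing the crucial local discrepancy at $j$. On the sufficiency side, the only delicate technical point is the convergence of the iteration, which I expect to follow from a straightforward contraction estimate under the standing smallness on $\Delta t$.
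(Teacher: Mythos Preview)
Your sufficiency argument proves a strictly weaker statement than the theorem claims. You rely on a Jacobi iteration whose convergence you obtain ``under a CFL-type smallness ensuring contraction'' and a ``standing smallness on $\Delta t$''. But the theorem carries no such hypothesis, and the entire point of the paper is that implicit monotone schemes are meant to cover fluxes that are merely continuous (not Lipschitz), where no meaningful CFL number exists. The paper's proof never iterates: it takes two \emph{given} solutions $v^{n+1}$, $w^{n+1}$ of the implicit system and compares them directly. First (Lemma~\ref{lemma-breuss-2}) it treats the case where $v^n$ and $w^n$ differ at a single index, and shows by induction over growing connected index sets $J_m$ that the inequality $v^{n+1}_i \ge w^{n+1}_i$ propagates outward from that index, using only (\ref{breuss-18})--(\ref{breuss-19}); then a second induction over the number of indices where $v^n>w^n$ handles the general case. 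No contraction, no $\Delta t$ restriction.

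Your necessity argument has a logical gap. You construct $v^{n+1}$ and $w^{n+1}$ that agree everywhere except at $j-\delta l$, where $v^{n+1}_{j-\delta l}=a+\Delta a>a=w^{n+1}_{j-\delta l}$. Thus $v^{n+1}\ge w^{n+1}$ holds by construction, and the monotonicity implication $v^n\ge w^n\Rightarrow v^{n+1}\ge w^{n+1}$ is vacuously satisfied --- there is nothing for it to contradict. Your proposed ``far-field supplement'' would have to break the ordering $v^{n+1}\ge w^{n+1}$ somewhere while simultaneously forcing $v^n\ge w^n$, and you have not shown how to do this. The paper avoids this trap: it works \emph{forward} rather than backward. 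If (\ref{breuss-18}) fails, take $v^n=w^n$ except possibly near index $i$, with $v^{n+1}_{i-\delta l}=a_1>a_2=w^{n+1}_{i-\delta l}$ and common right neighbour $c$. Monotonicity forces $b_1:=v^{n+1}_i\ge w^{n+1}_i=:b_2$, but subtracting the two implicit equations at index $i$ gives $b_1-b_2=\tilde H_l(a_1,b_1,c)-\tilde H_l(a_2,b_2,c)<0$, a direct contradiction.
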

Note that we have omitted the condition
\begin{displaymath}
H \left( 
l \in \underline{d}, \,
u_{j-\delta l}^{n+1}, \,
u_j^{n+1}, \,
u_{j+\delta l}^{n+1}, \,
s + \Delta s \right)
\geq
H \left( 
l \in \underline{d}, \,
u_{j-\delta l}^{n+1}, \,
u_j^{n+1}, \,
u_{j+\delta l}^{n+1}, \,
s \right)
\end{displaymath}
for all $j \in J$ and all $\Delta s \geq 0$,
since this condition is redundant. This is due to the form of
the method (\ref{breuss-15}).
Additionally, note that the monotonicity property 
does not depend on the exact nature of the source terms,
i.e. both {\it Scenario 1} and {\it Scenario 2}
are included within the range of applicability of
Theorem \ref{theorem-breuss-1}.
\vspace{1.0ex}

\begin{Theorem}[{\rm $L_\infty$-Stability}]
\label{theorem-breuss-2}
Let an implicit method of the form (\ref{breuss-15})
be given, which is also conservative and monotone.
Then the numerical solution is $L_\infty$-stable
over any finite time interval $[0, T]$.\vspace{0.5ex}\\
\end{Theorem}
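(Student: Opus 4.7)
The plan is to use Theorem \ref{theorem-breuss-1} (monotonicity) together with consistency of the numerical fluxes to establish a discrete maximum principle, and then iterate the resulting one-step estimate. Concretely, given a bounded data sequence $u^n$, set $M_n := \sup_j u_j^n$ and $Q_{n+1} := \sup_j q_j^{n+1}$, and compare the scheme's output with a spatially constant test sequence.

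The key observation driving the construction is that, because every numerical flux $g_l$ is consistent, the flux differences $g_l(C,C) - g_l(C,C)$ vanish identically for any constant $C$. Hence a spatially constant sequence $w^{n+1} \equiv C$ is produced by the implicit scheme (\ref{breuss-15}) exactly from the sequence $w_j^n = C - \Delta t\, q_j^{n+1}$ (no nonlinear solve is required for this test pair, which disposes of any well-posedness issue for the auxiliary step). Choosing $C := M_n + \Delta t\, Q_{n+1}$ yields $w_j^n = M_n + \Delta t (Q_{n+1} - q_j^{n+1}) \geq M_n \geq u_j^n$ for every $j \in J$. The monotonicity of the scheme (Definition \ref{definition-breuss-1}, available under Theorem \ref{theorem-breuss-1}) then forces $u_j^{n+1} \leq w_j^{n+1} = M_n + \Delta t\, Q_{n+1}$ for all $j$. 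The mirror construction with $\inf$s in place of $\sup$s produces the corresponding lower bound, and the two combine into the one-step estimate
\[
\|u^{n+1}\|_\infty \,\leq\, \|u^n\|_\infty + \Delta t\, \|q^{n+1}\|_\infty .
\]
Iterating this inequality for $n$ with $n\Delta t \leq T$ and interpreting the accumulated source contribution as a Riemann sum for $\int_0^T \|q(\cdot,t)\|_\infty\,\mathrm{d}t$ gives a bound that is finite by assumption (\ref{breuss-6}) and independent of the number of time steps.

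For \emph{Scenario 2} the source in (\ref{breuss-15}) is $q(\mathbf{x}_j, t^{n+1}, u_j^{n+1})$, so the same comparison applies but the right-hand side depends on the still-unknown $\|u^{n+1}\|_\infty$; the induction is then closed by a discrete Gr\"onwall argument using the regularity (\ref{breuss-12}) to estimate $q$ linearly in $u$ on compact $u$-intervals. I anticipate that the main subtlety is not the monotonicity step itself, but rather the base of the induction: assumption (\ref{breuss-4}) provides only $u_0 \in L^\infty_{\mathrm{loc}}(\mathbb{R}^d)$, so a genuinely global $L_\infty$-bound either needs the tacit extra hypothesis $u^0 \in L^\infty(\mathbb{R}^d)$ or a localized version of the comparison (exploiting that a constant test sequence still dominates $u^n$ on any sublevel set). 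Once bounded initial data is granted, the whole argument is driven by the two ingredients already in hand, monotonicity and consistency, with no compactness or CFL input required.
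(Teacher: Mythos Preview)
Your proposal is correct and follows essentially the same route as the paper: compare the numerical solution against spatially constant test sequences, exploit consistency so that the flux differences vanish on constants, and invoke monotonicity to obtain an upper/lower bound that grows only by the accumulated source contribution $\int_0^T\|q(\cdot,t)\|_\infty\,\mathrm{d}t$. The paper's proof is extremely terse---it jumps directly to the global bound $a-M\le u^n\le b+M$ without writing out the one-step comparison you give---so your version is in fact a more careful rendering of the same argument, and your observations about the $L^\infty$ versus $L^\infty_{\mathrm{loc}}$ hypothesis and the Gr\"onwall step needed in \emph{Scenario~2} are points the paper simply passes over.
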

The following Definition is useful for
proving convergence towards the entropy solution.
\vspace{1.0ex}

\begin{Definition}[{\rm Consistency with the entropy condition}]
\label{definition-breuss-2}
An implicit numerical scheme of type (\ref{breuss-15})
is consistent
with the entropy condition of Kru\v{z}kov, if there exist
for all $l= 1, \, \ldots, \, d$
numerical entropy fluxes $G_l$ which satisfy for all
$k \in \mathbb{R}$ the following assertions: 
\begin{enumerate}
\item
Consistency with the entropy flux of Kru\v{z}kov
\begin{equation}
G_l(v,v;k) = F_l(v;k) \textrm{ } \forall v 
\;
\textrm{with} 
\;
F_l(v;k)=\mathrm{sgn} (v-k)
\left[ f_l(v)-f_l(k) \right] 
\,.
\label{breuss-20}
\end{equation}
\item
Validity of a discrete entropy inequality
\begin{eqnarray}
\lefteqn{
\frac{U \left( u_j^{n+1};k \right)
-U \left( u_j^n;k \right)}{\Delta t}}\nonumber\\
& \leq &
-
\sum_{l=1}^d
\frac{G_l \left( u_j^{n+1},u_{j+\delta l}^{n+1};k \right)
-G_l \left( u_{j-\delta l}^{n+1},u_j^{n+1};k \right)}{\Delta x_l}
\nonumber\\
&&
+
\mathrm{sgn}
\left[
u_j^{n+1} - k
\right]
q_j^{n+1}\,,
\label{breuss-21}
\end{eqnarray}
where $U(v;k)= \left| v-k \right|$ is chosen
due to Kru\v{z}kov.\vspace{0.5ex}\\
\end{enumerate}
\end{Definition}
In the sequel, we define
\begin{equation}
a \vee b := \max(a,b) \quad \textrm{and} \quad
a \wedge b := \min(a,b)\,.
\label{breuss-22}
\end{equation}
The important connection 
between the numerical entropy fluxes $G_l$
and the numerical flux functions $g_l$ is now established which are
based on a variation of a procedure employed by Crandall
and Majda \cite{cramaj80}.
\vspace{1.0ex}

\begin{Lemma}\label{lemma-breuss-1}
Let a consistent and conservative numerical scheme 
of type (\ref{breuss-15}) be given
with numerical flux functions $g_l$, $l=1, \, \ldots, \, d$.
Then the numerical entropy fluxes defined by
\begin{equation}
G_l(v,w;k) \; := \;
g_l(v \vee k, w \vee k)-g_l(v \wedge k, w \wedge k)
\label{breuss-23}
\end{equation}
are consistent with the entropy 
fluxes of Kru\v{z}kov.\vspace{0.5ex}\\
\end{Lemma}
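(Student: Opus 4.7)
The plan is to verify the identity $G_l(v,v;k) = F_l(v;k)$ by direct substitution into the definition (\ref{breuss-23}) and splitting into the two cases dictated by the sign of $v-k$. The key ingredient is the consistency of $g_l$, namely $g_l(w,w)=f_l(w)$ for every $w\in\mathbb{R}$, which is exactly what collapses the two-argument numerical flux to the analytic flux once both slots carry the same value.

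First I would set $w=v$ in (\ref{breuss-23}). In the case $v\geq k$, the maxima and minima simplify as $v\vee k=v$ and $v\wedge k=k$, so the right-hand side of (\ref{breuss-23}) reduces to $g_l(v,v)-g_l(k,k)$. Applying the consistency of $g_l$ to each term yields $f_l(v)-f_l(k)$, which coincides with $\mathrm{sgn}(v-k)\left[f_l(v)-f_l(k)\right]$ on this range (with both sides being zero when $v=k$). In the complementary case $v<k$, one has $v\vee k=k$ and $v\wedge k=v$, hence the right-hand side becomes $g_l(k,k)-g_l(v,v)=f_l(k)-f_l(v)$, and again this equals $\mathrm{sgn}(v-k)\left[f_l(v)-f_l(k)\right]=-\left[f_l(v)-f_l(k)\right]$. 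Combining the two cases gives $G_l(v,v;k)=F_l(v;k)$ for every $v,k\in\mathbb{R}$.

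There is no real obstacle here: the lemma is essentially a definitional check, and the only subtlety is remembering to treat the equality $v=k$ as part of either case (it is consistent with both, since both sides vanish). The reason the definition (\ref{breuss-23}) is natural and worth stating is precisely that this split into $v\vee k$ and $v\wedge k$ mirrors Kru\v{z}kov's identity $|v-k|=(v\vee k)-(v\wedge k)$ together with $\mathrm{sgn}(v-k)\left[f_l(v)-f_l(k)\right]=\left[f_l(v\vee k)-f_l(v\wedge k)\right]$, so that the construction automatically yields the required consistency. This structural observation is also what will make the same $G_l$ useful later on for establishing the discrete entropy inequality (\ref{breuss-21}), but for the present lemma no additional machinery beyond the two-case computation is required.
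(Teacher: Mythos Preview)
Your proof is correct and follows essentially the same approach as the paper: both verify $G_l(v,v;k)=F_l(v;k)$ by collapsing $g_l$ via consistency and then identifying $f_l(v\vee k)-f_l(v\wedge k)$ with $\mathrm{sgn}(v-k)\left[f_l(v)-f_l(k)\right]$. The paper states this in a single line, while you spell out the case split $v\geq k$ versus $v<k$, but there is no substantive difference.
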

One can now prove the following result, partly by a
variation of the procedure given in \cite{cramaj80}.
We introduce the source term within the proof.
\vspace{0.5ex}
\begin{Theorem}\label{theorem-breuss-3}
Let an implicit scheme of the form (\ref{breuss-15})
be given, which is also 
consistent, conservative, and monotone.
Then the scheme is also consistent
with the entropy condition of Kru\v{z}kov.\vspace{1.0ex}\\
\end{Theorem}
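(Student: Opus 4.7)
My plan is to derive the discrete entropy inequality (\ref{breuss-21}) by a pointwise Crandall--Majda-type manipulation of the implicit equation (\ref{breuss-15}), adapted to the implicit setting by multiplying the scheme by $\mathrm{sgn}(u_j^{n+1}-k)$ rather than by applying an update operator to truncated data.

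First I would subtract the constant sequence $k$ from both sides of (\ref{breuss-17}) to obtain
\[
u_j^{n+1}-k \;=\; u_j^n-k + \sum_{l=1}^d \tilde H_l\bigl(u^{n+1}_{j-\delta l},u^{n+1}_j,u^{n+1}_{j+\delta l}\bigr) + \Delta t\,q_j^{n+1},
\]
multiply through by $\mathrm{sgn}(u_j^{n+1}-k)$, and bound $\mathrm{sgn}(u_j^{n+1}-k)(u_j^n-k)\le|u_j^n-k|$. This produces
\[
|u_j^{n+1}-k| \;\le\; |u_j^n-k| + \mathrm{sgn}(u_j^{n+1}-k)\sum_{l=1}^d \tilde H_l + \Delta t\,\mathrm{sgn}(u_j^{n+1}-k)\,q_j^{n+1},
\]
with $\tilde H_l$ still evaluated at the actual data $(u^{n+1}_{j-\delta l},u^{n+1}_j,u^{n+1}_{j+\delta l})$. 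Observe that the source term already appears in precisely the form required by the right-hand side of (\ref{breuss-21}), so no separate treatment of $q$ is needed.

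The heart of the proof is then the pointwise-in-$l$ inequality
\[
\mathrm{sgn}(u_j^{n+1}-k)\,\tilde H_l\bigl(u^{n+1}_{j-\delta l},u^{n+1}_j,u^{n+1}_{j+\delta l}\bigr) \;\le\; \tilde H_l(v_{j-\delta l},v_j,v_{j+\delta l}) - \tilde H_l(w_{j-\delta l},w_j,w_{j+\delta l}),
\]
where $v_m:=u^{n+1}_m\vee k$ and $w_m:=u^{n+1}_m\wedge k$. I would prove this by a case split on the sign of $u_j^{n+1}-k$, in each case invoking only monotonicity of $\tilde H_l$ in its first and third arguments (Theorem~\ref{theorem-breuss-1}) together with the identity $\tilde H_l(k,k,k)=0$ forced by consistency $g_l(k,k)=f_l(k)$. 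For $u_j^{n+1}>k$, the middle argument of the $v$-term coincides with $u_j^{n+1}$, so (\ref{breuss-18})--(\ref{breuss-19}) bound the $v$-term from below by $\tilde H_l(u^{n+1}_{j-\delta l},u^{n+1}_j,u^{n+1}_{j+\delta l})$, while the $w$-term has middle argument $k$ and outer arguments $\le k$, so it is bounded above by $\tilde H_l(k,k,k)=0$. The case $u_j^{n+1}<k$ is symmetric with the roles of $v$ and $w$ interchanged, and the equality case is immediate since the left-hand side vanishes and $\tilde H_l(v)\ge 0 \ge \tilde H_l(w)$.

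Finally, expanding $\tilde H_l(v_{j-\delta l},v_j,v_{j+\delta l})-\tilde H_l(w_{j-\delta l},w_j,w_{j+\delta l})$ via the definition of $\tilde H_l$ in (\ref{breuss-17}) regroups its four flux evaluations into precisely the combination that Lemma~\ref{lemma-breuss-1} packages as $G_l$, yielding
\[
\tilde H_l(v_{j-\delta l},v_j,v_{j+\delta l}) - \tilde H_l(w_{j-\delta l},w_j,w_{j+\delta l}) = -\frac{\Delta t}{\Delta x_l}\bigl[G_l(u^{n+1}_j,u^{n+1}_{j+\delta l};k) - G_l(u^{n+1}_{j-\delta l},u^{n+1}_j;k)\bigr].
\]
Substituting back and dividing by $\Delta t$ delivers (\ref{breuss-21}); the passage to Scenario~2 is automatic since the extra arguments $(\mathbf x_j,t^{n+1})$ ride along unchanged through every step. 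The main but rather mild obstacle I anticipate is the bookkeeping in the three-way case analysis establishing the pointwise-in-$l$ inequality; once monotonicity (\ref{breuss-18})--(\ref{breuss-19}) and the consistency-driven $\tilde H_l(k,k,k)=0$ are in hand, each case reduces to a single application of them.
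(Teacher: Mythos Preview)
Your proposal is correct, and the core argument coincides with the paper's: both rely on the same case split on the sign of $u_j^{n+1}-k$, the monotonicity conditions (\ref{breuss-18})--(\ref{breuss-19}) in the outer arguments, and the consistency fact $\tilde H_l(k,k,k)=0$.

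The organizational difference you flagged is real but minor. The paper proceeds in the classical Crandall--Majda fashion by evaluating the full $H$ (source term included) at the truncated data, establishing the identity
\[
-\sum_{l}\tfrac{\Delta t}{\Delta x_l}\bigl[G_l(u_j^{n+1},u_{j+\delta l}^{n+1};k)-G_l(u_{j-\delta l}^{n+1},u_j^{n+1};k)\bigr]
= H(\cdot\vee k)-H(\cdot\wedge k)-|u_j^n-k|,
\]
and then bounding $H(\cdot\vee k)$ from below and $H(\cdot\wedge k)$ from above via four cases; the source contribution survives in two of the four cases and is only afterwards reassembled into $\mathrm{sgn}(u_j^{n+1}-k)\,q_j^{n+1}$. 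By multiplying the scheme by $\mathrm{sgn}(u_j^{n+1}-k)$ at the outset, you peel off the source term in its final form immediately and reduce the case analysis to the source-free quantities $\tilde H_l$, which is slightly cleaner bookkeeping. Otherwise the two routes are the same argument read in opposite directions: the paper starts from $G_l$ and arrives at the scheme, you start from the scheme and arrive at $G_l$.
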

Under the same assumptions, we prove convergence of the
corresponding numerical approximation
to the entropy solution. We want to do this later
when we concretely investigate numerical schemes. 

\subsection{Proofs}
\label{breuss-subsection-3-2}

We first want to prove Theorem \ref{theorem-breuss-1}.
The idea of the proof can be sketched as follows.
Let two sequences $w^n$ and $w^{n+1}$ be
given, where $w^{n+1}$ results
from an application of a considered method on $w^n$.
Then, a positive change in a given value $w_j^n$
inspires a positive change in $w_j^{n+1}$.
Secondly, a positive change in $w_j^{n+1}$ 
inspires positive changes in $w_{j \pm \delta l}^{n+1}$
for all $l$, thus creating no oscillations.
Thirdly, concerning an arbitrary index $i$, positive
changes in $w_{i \pm \delta l}^{n+1}$ result in positive
changes in $w_i^{n+1}$. Since the index $j$ used in the second
argument is chosen arbitrarily,
this is the same argument as the third one for
$j \in \left\{ i \pm \delta l; \, l=1, \, \ldots, \, d \right\}$. 
If and only if these conditions are fulfilled by 
a considered method, the method is monotone.

In order to give the proof of
Theorem \ref{theorem-breuss-1}
a convenient structure, we first give the
following Lemma.\vspace{1.0ex}

\begin{Lemma}\label{lemma-breuss-2}
Let a consistent and conservative implicit
method of the form (\ref{breuss-15})
be given, which satisfies the
conditions (\ref{breuss-18}) and (\ref{breuss-19}).
Furthermore, let two sequences 
$v^n = \left\{ v_j^n \right\}_{j \in J}$
and $w^n = \left\{ w_j^n \right\}_{j \in J}$
be given. Then from
\begin{displaymath}
\exists \, i \in J: \; v^n_i >w^n_i
\quad \textrm{and}
\quad \forall \, j \in \mathbf{J} \: (j \neq i):
\; v^n_j = w^n_j
\end{displaymath}
follows $v^{n+1} \geq w^{n+1}$ 
in the sense of the comparison of components.\vspace{0.5ex}
\end{Lemma}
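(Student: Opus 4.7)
The plan is to realise the three-step propagation sketch given at the opening of Section~\ref{breuss-subsection-3-2} as a direct monotonicity argument on the implicit defining equation (\ref{breuss-15}), exploiting the hypotheses (\ref{breuss-18}) and (\ref{breuss-19}) to propagate the positive data change at the single index $i$ through the implicit coupling, and then to close by a discrete minimum principle.

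Concretely, I would introduce $e_j := v_j^n - w_j^n$ and $d_j := v_j^{n+1} - w_j^{n+1}$; by hypothesis $e_i > 0$ and $e_j = 0$ for $j \in J \setminus \{i\}$. Subtracting the two instances of (\ref{breuss-15}) (the source contributions cancel in \emph{Scenario 1} since they depend only on the space--time node) yields the pointwise comparison equation
\begin{displaymath}
d_j \;=\; e_j \;+\; \sum_{l=1}^{d}\Bigl[\tilde H_l\bigl(v_{j-\delta l}^{n+1},v_j^{n+1},v_{j+\delta l}^{n+1}\bigr) - \tilde H_l\bigl(w_{j-\delta l}^{n+1},w_j^{n+1},w_{j+\delta l}^{n+1}\bigr)\Bigr]\,, \qquad j \in J.
\end{displaymath}
Splitting each bracket into three telescoping pieces --- one per argument of $\tilde H_l$ --- the slot-$1$ and slot-$3$ pieces carry the signs of $d_{j-\delta l}$ and $d_{j+\delta l}$ respectively, by (\ref{breuss-18}) and (\ref{breuss-19}), while the slot-$2$ piece combines with the $d_j$ on the left-hand side so that the net coefficient of $d_j$ is at least $1$; this is exactly the implicit middle-slot monotonicity that the authors flag as redundant after Theorem~\ref{theorem-breuss-1}, and it is automatic from the $+u_j^{n+1}$ on the left of (\ref{breuss-15}) together with (\ref{breuss-18})--(\ref{breuss-19}).

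The proof would then be completed by contradiction: assume $\inf_j d_j < 0$ and pick an index $j^\star$ realising this infimum (or an $\varepsilon$-approximate minimiser, using the $L^\infty$-boundedness of the two implicit solutions when the infimum is not attained, and letting $\varepsilon \to 0$ at the end). At $j^\star$ we have $d_{j^\star \pm \delta l} \geq d_{j^\star}$, so the slot-$1$ and slot-$3$ contributions controlled via (\ref{breuss-18}) and (\ref{breuss-19}), together with the middle-slot absorption, rearrange the comparison equation at $j^\star$ into $d_{j^\star} \geq e_{j^\star} \geq 0$, contradicting $d_{j^\star} < 0$. Hence $v^{n+1} \geq w^{n+1}$ component-wise.

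The main obstacle I expect is the first of these ingredients: carrying out the slot-by-slot telescoping so that each piece has a definite and usable sign under the combined hypotheses (\ref{breuss-18}), (\ref{breuss-19}) and the automatic middle-slot monotonicity, and in particular being precise about the order in which the three arguments of $\tilde H_l$ are varied so that slot-$1$ and slot-$3$ contributions that could in principle be of either sign are dominated by $d_{j^\star \pm \delta l} - d_{j^\star} \geq 0$. A secondary, purely technical point is the handling of the infimum over the countably infinite index set $J$: either a genuine minimiser exists outright or an $\varepsilon$-approximate minimiser suffices once uniform $L^\infty$-boundedness of the two implicit solutions is in hand.
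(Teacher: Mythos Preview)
Your approach via a discrete minimum principle is genuinely different from the paper's inductive propagation over growing index sets $J_m$, but in its present form it has a real gap at the step where you claim the comparison equation at the minimiser $j^\star$ rearranges to $d_{j^\star}\geq e_{j^\star}$.

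The conditions (\ref{breuss-18}), (\ref{breuss-19}) and the derived middle-slot monotonicity are purely \emph{qualitative}: they tell you the sign of each telescoping piece relative to the sign of $d_{j^\star-\delta l}$, $-d_{j^\star}$, $d_{j^\star+\delta l}$, but give no quantitative bound allowing the (positive) slot-2 piece to absorb the (possibly negative) slot-1 and slot-3 pieces. Your phrase ``dominated by $d_{j^\star\pm\delta l}-d_{j^\star}\geq 0$'' suggests a Lipschitz-type estimate that is simply not available under the hypotheses. Concretely, take $d=1$, the implicit upwind flux $g(a,b)=a^2/2$ (so $\tilde H(a,b,c)=-\tfrac{\lambda}{2}(b^2-a^2)$), and values $w_{j^\star-1}=2$, $w_{j^\star}=1$, $w_{j^\star+1}=2$ together with $v_{j^\star-1}=1.5$, $v_{j^\star}=0$, $v_{j^\star+1}=1.5$. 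Then $d_{j^\star}=-1$ is a strict local minimum of $d$, yet the total $\tilde H$-difference equals $-0.375\,\lambda<0$, so the comparison equation reads $d_{j^\star}=e_{j^\star}-0.375\,\lambda<e_{j^\star}$ and your contradiction does not fire. Of course such a local configuration cannot arise from genuine global solutions with $v^n\geq w^n$ (that is precisely the content of the lemma), but your argument is purely local at $j^\star$ and therefore cannot distinguish this.

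The paper avoids this by not working pointwise at all: it fixes the single perturbed index, uses the explicit occurrence of $u_j^n$ with unit coefficient in (\ref{breuss-15}) to get $v_0^{n+1}\geq w_0^{n+1}$, and then propagates the inequality outward one neighbour at a time via (\ref{breuss-18})--(\ref{breuss-19}), growing the index set $J_m$ by induction. That propagation uses the \emph{global} coupling of the implicit system (``the sequences $v^{n+1}$ and $w^{n+1}$ are identical outside the considered subsets'') rather than a local comparison at an extremal index, which is exactly the ingredient your minimum-principle plan lacks.
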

\begin{proof}(of Lemma \ref{lemma-breuss-2})\\
By the assumption of the Lemma there exists
an index $i \in J$ so that $v_i^n > w_i^n$ holds.
Without restriction of generality we choose $i=0$.
The proof of the assertion follows by induction
over suitable subsets of $J$.
\vspace{1.0ex}\\
Let us introduce these subsets. Therefore, let
$J_m$ denote a subset of $J$ containing
$m$ elements with
\begin{displaymath}
\forall \, m_0 \in J_m \, 
\exists \, m_1 \in J_m \,(m_0 \neq m_1):
\;
\Bigl[
\left\{ 
m_0
\right\} 
\cap
\left\{
p \in J_m \; ; \;
p = m_1 \pm \delta l
\textrm{, }
l = 1, \, \ldots, \, d
\right\}
\Bigr]
\neq \emptyset
\end{displaymath}
for $m \geq 2$, thus the elements of $J_m$ are
indices of neighboring points.
\vspace{1.0ex}\\
{\it Beginning of the induction:} $m=1$\\
As indicated, we choose without restriction of
generality $J_1= \left\{ 0 \right\}$.
The statement is true because of the
form of the method (\ref{breuss-15}), so that
\begin{eqnarray}
\lefteqn{
H \left( 
l \in \underline{d}, \,
w_{-\delta l}^{n+1}, \,
w_0^{n+1}, \,
w_{\delta l}^{n+1}, \,
s + \Delta s \right)
} \nonumber\\
& \geq &
H \left(
l \in \underline{d}, \,
w_{-\delta l}^{n+1}, \,
w_0^{n+1}, \,
w_{\delta l}^{n+1}, \,
s \right)
\quad \forall \, \Delta s \geq 0
\quad
\textrm{holds.}
\nonumber
\end{eqnarray}
\vspace{0.5ex}\\
{\it Assumption:}\\
The statement is true for arbitrary but fixed $m>1$.
\vspace{0.5ex}\\
{\it Induction step:} $m \mapsto m+1$\\
Let the statement be true for the subsets
$\left\{ v_i^{n+1} \right\}_{i \in J_m}$ and
$\left\{ w_i^{n+1} \right\}_{i \in J_m}$
of the sequences $v^{n+1}$ and $w^{n+1}$. 
In particular, it holds
\begin{eqnarray}
&&
v_{\tilde m}^{n+1} \geq w_{\tilde m}^{n+1}
\quad
\textrm{for an index}
\quad
\tilde m \in J_m
\nonumber\\
& \textrm{with} &
\Bigl[
\left\{ i \in J ; \, i= \tilde m \pm \delta l \textrm{, }
l=1,\, \ldots, \, d \right\} 
\cap
\left( J \setminus J_m \right)
\Bigr]
\;
\neq 
\;
\emptyset
\nonumber
\end{eqnarray}
which is otherwise chosen arbitrarily, i.e.
we consider an index $\tilde m$ corresponding
to a grid point with at least one
neighbor having an index not in $J_m$.

Without restriction on generality, let us choose
a particular index $l_m$ corresponding to the
situation
\begin{displaymath}
\tilde m \in J_m
\quad
\textrm{and}
\quad
\tilde m + \delta l_m \notin J_m
\,.
\end{displaymath}
Since by construction the sequences $v^{n+1}$ and $w^{n+1}$
are identical outside the considered subsets, it holds
\begin{displaymath}
\tilde H_{l_m} 
\left( 
v_{\tilde m}^{n+1}, \,
w_{\tilde m + \delta l_m}^{n+1}, \,
w_{\tilde m + 2\delta l_m}^{n+1}
\right)
\;
\geq
\;
\tilde H_{l_m} 
\left( 
w_{\tilde m}^{n+1}, \,
w_{\tilde m + \delta l_m}^{n+1}, \,
w_{\tilde m + 2\delta l_m}^{n+1}
\right)
\end{displaymath}
by (\ref{breuss-18}). If the index 
$\tilde m + 2 \delta l_m$ is already in $J_m$, we
estimate
\begin{displaymath}
\tilde H_{l_m} 
\left( 
v_{\tilde m}^{n+1}, \,
w_{\tilde m + \delta l_m}^{n+1}, \,
v_{\tilde m + 2\delta l_m}^{n+1}
\right)
\geq 
\tilde H_{l_m} 
\left( 
w_{\tilde m}^{n+1}, \,
w_{\tilde m + \delta l_m}^{n+1}, \,
w_{\tilde m + 2\delta l_m}^{n+1}
\right)
\end{displaymath}
by also using (\ref{breuss-19}).
The case $\tilde m \in J_m$
and $\tilde m - \delta l_m \notin J_m$
can be handled analogously.
 
By defining 
\begin{displaymath}
J_{m+1}:= J_m \cup \left\{ \tilde m +\delta l_m \right\}
\quad
\textrm{or}
\quad
J_{m+1}:= J_m \cup \left\{ \tilde m -\delta l_m \right\}
\end{displaymath}
corresponding to the situation under consideration,
it follows $v_i^{n+1} \geq w_i^{n+1}$
for all $i \in J_{m+1}$.
Since $\tilde m$ and $l_m$ were chosen arbitrarily
within the framework of the construction,
the procedure is well-defined and the proof is finished.
\end{proof}

\begin{proof}(of Theorem \ref{theorem-breuss-1})\\
Let again two sequences $v^n,w^n$ be given, which are mapped
on sequences $v^{n+1}$ and $w^{n+1}$
by application of the considered 
consistent and conservative numerical method,
respectively. 
\vspace{1.0ex}\\
"$\Rightarrow$":\\
Let the method be monotone in the sense of 
Definition \ref{definition-breuss-1}.
Let $v^n \geq w^n$ hold in the sense of
comparison of components.
By the assumed monotonicity of the scheme
follows $v^{n+1} \geq w^{n+1}$.
It remains to verify the validity of the conditions
(\ref{breuss-18}) and (\ref{breuss-19}).
\vspace{0.5ex}\\
{\it To condition (\ref{breuss-18}):}\\
Let $l \in \left\{ 1, \, \ldots, \, d \right\}$
be chosen arbitrarily but fixed. Accordingly, let
an arbitrarily chosen but fixed index $i$ 
and a corresponding set of values
\begin{displaymath}
\left\{ a,b,c \right\}
\subset
w^{n+1}
\quad
\textrm{be given with}
\quad
\left( w_{i-\delta l}^{n+1}, \,
w_i^{n+1}, \,
w_{i+\delta l}^{n+1} \right)
=
\left( a,b,c \right)
\,.
\end{displaymath}
Assume that for $\Delta a \geq 0$ it does not hold in general
\begin{displaymath}
\tilde H_l \left( a+\Delta a,b,c \right)
\quad \geq \quad 
\tilde H_l \left(a,b,c \right)
\,.
\end{displaymath}
Then there exist two tuples $(a_1,b_1,c)$
and $(a_2,b_2,c)$ with
$a_1 > a_2$ and
\begin{equation}
\tilde H_l \left( a_1,b_1,c \right)
<
\tilde H_l \left( a_2,b_2,c \right)
\,.
\label{breuss-24}
\end{equation}
Since we investigate the general situation, we may well
assume equality of the remainder of the sequences under 
consideration, thus the only resulting change by application
of the method originates from (\ref{breuss-24}).
By (\ref{breuss-15})
it follows that $b_1<b_2$ has in general to be valid.
On the other hand there is $(a_1,b_1) \geq (a_2,b_2)$
in the sense of comparison of components by the assumed monotonicity
of the method, and so the assumption
is wrong and the validity of (\ref{breuss-18}) is
verified.\vspace{0.5ex}\\
{\it To condition (\ref{breuss-19}):}\\
The proof can be done analogously.
\vspace{1.0ex}\\
"$\Leftarrow$":\\
Next, the validity of the monotonicity condition
(\ref{breuss-16}) under the assumptions
(\ref{breuss-18}) and (\ref{breuss-19})
is proven.
Therefore, we define the set
\begin{displaymath}
\hat J^n
:=
\left\{
i \in J \; ; \;
v_i^n > w_i^n, \,
v_i^n \in v^n, \,
w_i^n \in w^n
\right\}
\,.
\end{displaymath}
There are only a few possibilities for the
composition of $\hat J^n$:
It may consist of the empty set or a finite or
infinite subset of the index set $J$ containing the
indices of all spatial grid points.
Since we have to take into account all these cases,
we define
\begin{displaymath}
\hat J_m^n
:=
\left\{
\hat J^n
\; ; \;
\sharp \left( \hat J^n \right) = m
\right\}
\,.
\end{displaymath}
The proof of the assertion follows by 
induction over $m \geq 1$ concerning these sets. Note that the case $m=0$ is trivial.
\vspace{1.0ex}\\
{\it Beginning of the induction:} $\hat J^n= \hat J^n_1$.\\
Let $i$ be the index in the arbitrarily chosen but fixed index
set $\hat J^n_1$. Then the validity of the monotonicity
condition follows by application
of Lemma \ref{lemma-breuss-2}.
\vspace{1.0ex}\\
{\it Assumption:} The assertion holds for all subsets of
$\hat J^n = \hat J^n_m$ for an arbitrarily chosen but fixed
number $m>1$.
\vspace{1.0ex}\\
{\it Induction step:} $m \mapsto m+1$\\ 
Now we consider
$\hat J^n_{m+1}$ with
$\hat J_m^n \subset \hat J^n_{m+1}$.
We define two particular indices $m_1$, $m_2$ with
\begin{displaymath}
m_1 \in \hat J_m^n 
\quad
\textrm{and}
\quad
m_2 \in 
\left( \hat J^n_{m+1} \setminus \hat J^n_m \right)
\,.
\end{displaymath}
Thereby, the index $m_1$ is chosen arbitrarily but fixed.
By the assumption of the induction,
the scheme is monotone with respect to positive
changes in values corresponding to
the index set $\hat J^n_m$.
This means in particular that a positive change
in $v_{m_1}^n$ together with positive changes in
other values corresponding to $\hat J^n_m$
leads to non-negative changes in the sequence $v^{n+1}$.

Now a simultaneous positive change in
in $v_{m_1}^n$ and $v_{m_2}^n$ is considered
while in the background there are arbitrary but fixed
positive changes in the values corresponding to
$\hat J^n_{m+1} \setminus \left\{ m_1, \, m_2 \right\}$.

Let the data resulting from positive changes in 
$v_i^n$, $i \in \hat J^n_m \setminus \left\{ m_1, \, m_2 \right\}$,
be denoted by $\bar v^{n+1}$,
i.e. $\bar v^{n+1} \geq w^{n+1}$ holds by the
assumption of the induction step.

Moreover, let $\Delta_j^1$ be a change in $\bar v_j^{n+1}$
induced by a positive change in $v_{m_1}^n$.
Thus $\Delta_j^1$ is always non-negative
by the assumption of the induction.
Analogously, let $\Delta_j^2$
a change in $\bar v_j^{n+1}$ induced by 
a positive change in $v_{m_2}^n$.
The change $\Delta_j^2$ is also non-negative
which follows analogously to the proof
of Lemma \ref{lemma-breuss-2}.

There are two possibilities to investigate
for the mutual effects of such changes in
data corresponding to an
arbitrary but fixed index $\tilde i$ and 
an accordingly arranged index 
$l_i \in \left\{ 1, \, \ldots, \, d \right\}$:
\begin{displaymath}
\tilde H_{l_i}
\left(
\bar v_{\tilde i-\delta l_i}^{n+1}+
\Delta_{\tilde i-\delta l_i}^1, \,
\bar v_{\tilde i}^{n+1}, \,
\bar v_{\tilde i+\delta l_i}^{n+1}+
\Delta_{\tilde i+\delta l_i}^2 
\right)
\stackrel{(\ref{breuss-18}), (\ref{breuss-19})}{\geq}
\tilde H_{l_i}
\left(
\bar v_{\tilde i-\delta l_i}^{n+1}, \,
\bar v_{\tilde i}^{n+1}, \,
\bar v_{\tilde i+\delta l_i}^{n+1} 
\right)
\end{displaymath}
and
\begin{displaymath}
\tilde H_{l_i}
\left(
\bar v_{\tilde i-\delta l_i}^{n+1}+
\Delta_{\tilde i-\delta l_i}^2, \,
\bar v_{\tilde i}^{n+1}, \,
\bar v_{\tilde i+\delta l_i}^{n+1}+
\Delta_{\tilde i+\delta l_i}^1
\right)
\stackrel{(\ref{breuss-18}), (\ref{breuss-19})}{\geq}
\tilde H_{l_i}
\left(
\bar v_{\tilde i-\delta l_i}^{n+1}, \,
\bar v_{\tilde i}^{n+1}, \,
\bar v_{\tilde i+\delta l_i}^{n+1}
\right)
\,.
\end{displaymath}
Note the arbitrary choice of $m_1$ and $m_2$
by a simultaneous change in the data 
corresponding to the index set 
$\hat J^n_m \setminus \left\{ m_1, \, m_2 \right\}$.
Since there are also no limitations
concerning the choices of $\hat J^n_m$ and $l_i$,
the procedure is well defined
and the proof is finished.
\end{proof}
\vspace{0.5ex}
\begin{proof}(of Theorem \ref{theorem-breuss-2})\\
Let a sequence $u^0 \in L_\infty$ be given.
We then identify the finite values
\begin{displaymath}
a := \inf_{j \in J} u_j^0
\quad
\textrm{and}
\quad
b := \sup_{j \in J} u_j^0
\,.
\end{displaymath}
Since the source terms are pointwise bounded over
the time interval $(0,T)$ --- see assumptions
(\ref{breuss-6}) and (\ref{breuss-12}), respectively ---
they are in both scenarios of interest
especially bounded by a finite number $M$ with
\begin{displaymath}
\int_0^T \| q \|_\infty \, \mathrm{d}t 
\, < \, M
\,.
\end{displaymath}
Consequently, by the assumed monotonicity follows
that the numerical solution obtained via given data
$u_0$ is bounded for all $n$ with $n \Delta t < T$
by $a^n \leq u^n \leq b^n$ with
\begin{displaymath}
a^n_j := a-M \, (>-\infty) \; \forall j \in J
\quad
\textrm{and}
\quad
b^n_j := b+M \, (<\infty) \; \forall j \in J
\,.
\end{displaymath}
\end{proof}
\vspace{0.5ex}
\begin{proof}(of Lemma \ref{lemma-breuss-1})\\
Because the numerical scheme is consistent
and conservative, the statement
\begin{displaymath}
G_l(v,v;k) =
g_l(v \vee k, v \vee k)-g_l(v \wedge k, v \wedge k) =
\mathrm{sgn}(v-k)[f_l(v)-f_l(k)]
\end{displaymath}
holds by (\ref{breuss-22})
for all $l=1, \, \ldots, \, d$ and all $k \in \mathbb{R}$.
\end{proof}
\vspace{1.0ex}

\begin{proof}(of Theorem \ref{theorem-breuss-3})\\
Since the method is assumed to be consistent and conservative,
there exist numerical flux functions $g_l$,
$l=1, \, \ldots, \, d$, so that
one can construct numerical entropy fluxes $G_l$
by applying Lemma \ref{lemma-breuss-1}.
Thereby, the consistency with the entropy fluxes
due to Kru\v{z}kov is given.
It is left to show the validity of
a discrete entropy inequality.
Therefore, let $k \in \mathbb{R}$ be chosen
arbitrarily but fixed. By using the definition of
$G_l$, we derive 
\begin{eqnarray}
\lefteqn{ -
\sum_{l=1}^d
\frac{\Delta t}{\Delta x_l}
\biggl\{
G_l \left(
u_j^{n+1}, \, u_{j+\delta l}^{n+1}; k
\right)
-
G_l \left(
u_{j-\delta l}^{n+1}, \, u_j^{n+1};k
\right)
\biggr\}
}\nonumber\\ 
& = &
H
\left( 
l \in \underline{d}, \,
u_{j-\delta l}^{n+1} \vee k, \,
u_j^{n+1} \vee k, \,
u_{j+\delta l}^{n+1} \vee k, \,
u_j^n \vee k 
\right) \nonumber\\
&&
- 
H
\left(
l \in \underline{d}, \,
u_{j-\delta l}^{n+1} \wedge k, \,
u_j^{n+1} \wedge k, \,
u_{j+\delta l}^{n+1} \wedge k, \,
u_j^n \wedge k
\right)
-
\left| u_j^n - k \right|
\,. \label{breuss-25}
\end{eqnarray}
Now we estimate the terms involving $H$
by using the monotonicity properties of the method.
It is necessary to employ a diversion of the cases
$u_j^{n+1} \geq k$ and $u_j^{n+1}<k$.
\vspace{1.0ex}\\
(a) {\it Case } $u_j^{n+1} \geq k$:
\begin{eqnarray}
\lefteqn{
H
\left( 
l \in \underline{d}, \,
u_{j-\delta l}^{n+1} \vee k, \,
u_j^{n+1} \vee k, \,
u_{j+\delta l}^{n+1} \vee k, \,
u_j^n \vee k 
\right)
}\nonumber\\
& \stackrel{\textrm{(a)}}{=} &
u_j^n \vee k
-
\sum_{l=1}^d
\frac{\Delta t}{\Delta x_l}
\left\{
g_l \left( u_j^{n+1}, \, u_{j + \delta l}^{n+1} \vee k \right)
-
g_l \left( u_{j-\delta l}^{n+1} \vee k, \, u_j^{n+1} \right)
\right\}
+
\Delta t
q_j^{n+1}
\nonumber\\
& \geq &
u_j^n
-
\sum_{l=1}^d
\frac{\Delta t}{\Delta x_l}
\left\{
g_l \left( u_j^{n+1}, \, u_{j + \delta l}^{n+1} \right)
-
g_l \left( u_{j-\delta l}^{n+1}, \, u_j^{n+1} \right)
\right\}
+
\Delta t
q_j^{n+1}
\nonumber\\
& = &
u_j^{n+1} 
\quad
\stackrel{\textrm{(a)}}{=} 
\quad
u_j^{n+1}\vee k
\,.
\nonumber
\end{eqnarray}
(b) {\it Case } $u_j^{n+1} < k$:
\begin{eqnarray}
\lefteqn{
H
\left( 
l \in \underline{d}, \,
u_{j-\delta l}^{n+1} \vee k, \,
u_j^{n+1} \vee k, \,
u_{j+\delta l}^{n+1} \vee k, \,
u_j^n \vee k 
\right)
}\nonumber\\
& \stackrel{\textrm{(b)}}{=} &
u_j^n \vee k
-
\sum_{l=1}^d
\frac{\Delta t}{\Delta x_l}
\left\{
g_l \left( k, \, u_{j + \delta l}^{n+1} \vee k \right)
-
g_l \left( u_{j-\delta l}^{n+1} \vee k, \, k \right)
\right\}
+
\Delta t
q_j^{n+1}
\nonumber\\
& \geq &
k
-
\sum_{l=1}^d
\frac{\Delta t}{\Delta x_l}
\left\{
g_l \left( k, \, k \right)
-
g_l \left( k, \, k \right)
\right\}
+
\Delta t
q_j^{n+1}
\nonumber\\
& = &
k+ \Delta t q_j^{n+1}
\quad
\stackrel{\textrm{(b)}}{=} 
\quad
u_j^{n+1}\vee k
+
\Delta t
q_j^{n+1}
\,.
\nonumber
\end{eqnarray}
(c) {\it Case } $u_j^{n+1} \geq k$:
\begin{eqnarray}
\lefteqn{
H
\left( 
l \in \underline{d}, \,
u_{j-\delta l}^{n+1} \wedge k, \,
u_j^{n+1} \wedge k, \,
u_{j+\delta l}^{n+1} \wedge k, \,
u_j^n \wedge k 
\right)
}\nonumber\\
& \stackrel{\textrm{(c)}}{=} &
u_j^n \wedge k
-
\sum_{l=1}^d
\frac{\Delta t}{\Delta x_l}
\left\{
g_l \left( k, \, u_{j + \delta l}^{n+1} \wedge k \right)
-
g_l \left( u_{j-\delta l}^{n+1} \wedge k, \, k \right)
\right\}
+
\Delta t
q_j^{n+1}
\nonumber\\
& \leq &
k
-
\sum_{l=1}^d
\frac{\Delta t}{\Delta x_l}
\left\{
g_l \left( k, \, k \right)
-
g_l \left( k, \, k \right)
\right\}
+
\Delta t
q_j^{n+1}
\nonumber\\
& = &
k + \Delta t q_j^{n+1}
\quad
\stackrel{\textrm{(c)}}{=} 
\quad
u_j^{n+1}\wedge k
+
\Delta t
q_j^{n+1}
\,.
\nonumber
\end{eqnarray}
(d) {\it Case } $u_j^{n+1} < k$:
\begin{eqnarray}
\lefteqn{
H
\left( 
l \in \underline{d}, \,
u_{j-\delta l}^{n+1} \wedge k, \,
u_j^{n+1} \wedge k, \,
u_{j+\delta l}^{n+1} \wedge k, \,
u_j^n \wedge k 
\right)
}\nonumber\\
& \stackrel{\textrm{(d)}}{=} &
u_j^n \wedge k
-
\sum_{l=1}^d
\frac{\Delta t}{\Delta x_l}
\left\{
g_l \left( u_j^{n+1}, \,
u_{j + \delta l}^{n+1} \wedge k \right)
-
g_l \left( u_{j-\delta l}^{n+1} \wedge k, \,
u_j^{n+1} \right)
\right\}
+
\Delta t
q_j^{n+1}
\nonumber\\
& \leq &
u_j^n
-
\sum_{l=1}^d
\frac{\Delta t}{\Delta x_l}
\left\{
g_l \left( u_j^{n+1}, \, u_{j + \delta l}^{n+1} \right)
-
g_l \left( u_{j-\delta l}^{n+1}, \, u_j^{n+1} \right)
\right\}
+
\Delta t
q_j^{n+1}
\nonumber\\
& = &
u_j^{n+1} 
\quad
\stackrel{\textrm{(d)}}{=} 
\quad
u_j^{n+1}\wedge k
\,.
\nonumber
\end{eqnarray}
By combining all these cases, we 
obtain from (\ref{breuss-25}) the inequality
\begin{eqnarray}
&&
-
\sum_{l=1}^d
\frac{\Delta t}{\Delta x_l}
\biggl\{
G_l \left(
u_j^{n+1}, \, u_{j+\delta l}^{n+1}; k
\right)
-
G_l \left(
u_{j-\delta l}^{n+1}, \, u_j^{n+1};k
\right)
\biggr\}
+
\mathrm{sgn}
\left[
u_j^{n+1} - k
\right]
\Delta t
q_j^{n+1}
\nonumber\\
& \geq &
u_j^{n+1} \vee k - u_j^{n+1} \wedge k
-
\mathrm{sgn}
\left[
u_j^{n+1}
-
k
\right]
\Delta t
q_j^{n+1}
\nonumber\\
&&
+
\mathrm{sgn}
\left[
u_j^{n+1} - k
\right]
\Delta t
q_j^{n+1}
-
\left|
u_j^n - k
\right|
\nonumber\\
& = &
\left|
u_j^{n+1}
-
k
\right|
-
\left|
u_j^n
-
k
\right|
\,.
\nonumber
\end{eqnarray}
By construction, the procedure is well defined.
Division by $\Delta t$ gives the desired 
discrete entropy inequality.
\end{proof}

In the case of {\it Scenario 2}, the validity
of the corresponding discrete entropy inequality
can be proven in the same way, resulting essentially
from the monotonicity of the method.
The difference between {\it Scenario 1} and {\it Scenario 2}
is made up by substituting
\begin{displaymath}
\sum_{l=1}^d
\mathrm{sgn}
\left[
u_j^{n+1} - k
\right]
\left[
q_j^{n+1}
-
f_{l_{x_l}}(j, n+1)
\right]
\quad \textrm{for } \quad
\mathrm{sgn}
\left[
u_j^{n+1} - k
\right]
q_j^{n+1}
\end{displaymath}
\begin{displaymath}
\textrm{with} \quad
f_{l_{x_l}}(j, n+1)
:=
f_{l_{x_l}} \left( \tilde J (j), (n+1)\Delta t, u_j^{n+1} \right)
\,.
\end{displaymath}

\section{Implicit numerical methods}
\label{breuss-section-4}

This section contains the theoretical investigation of 
a few selected implicit methods. These are: (1) An implicit
upwind scheme, (2) an implicit version of the Lax-Friedrichs
scheme and (3) an implicit Godunov-type method.

\subsection{An implicit upwind method}
\label{breuss-subsection-4-1}

The implicit formulation of the upwind method reads
\begin{equation}
u_j^{n+1}
=
u_j^n
-
\sum_{l=1}^d
\frac{\Delta t}{\Delta x_l}
\left\{
f_l \left( u_j^{n+1} \right)
-
f_l \left( u_{j-\delta l}^{n+1} \right)
\right\}
+
\Delta t
q_j^{n+1}
\,.
\label{breuss-26}
\end{equation}
We now employ the developed implicit notion of
monotonicity.\\ 
{\it To condition (\ref{breuss-18}):}
\begin{eqnarray}
\lefteqn{
\tilde H_l
\left( 
a + \Delta a, \,
b, \,
c    
\right)
-
\tilde H_l
\left( 
a, \,
b, \,
c 
\right)
}\nonumber\\
& = &
\left[
-
\frac{\Delta t}{\Delta x_l}
\left[ f_l(b)- f_l(a + \Delta a) \right]
\right]
-
\left[
-
\frac{\Delta t}{\Delta x_l}
\left[ f_l(b)- f_l(a) \right]
\right] \nonumber\\
& = &
\frac{\Delta t}{\Delta x_l}
\left[
f_l(a + \Delta a)- f_l(a)
\right] 
\,.
\nonumber
\end{eqnarray}
The condition (\ref{breuss-18}) is fulfilled if
$f_l$ grows monotonically for all $l = 1, \, \ldots, \, d$.\\
{\it To condition (\ref{breuss-19}):}
\begin{eqnarray}
\lefteqn{
\tilde H_l 
\left( a,b,c + \Delta c
\right)
-
\tilde 
H_l
\left( a,b,c \right)} \nonumber\\
& = &
\left[
-
\frac{\Delta t}{\Delta x_l}
\left[ f_l(b)- f_l(a) \right]
\right]
-
\left[
- \frac{\Delta t}{\Delta x_l}
\left[ f_l(b) - f_l(a) \right] 
\right]
= 
0 \; \left( \geq 0 \right)
\,.
\nonumber
\end{eqnarray}
Thus, the condition (\ref{breuss-19}) is always fulfilled
and the implicit upwind scheme is monotone 
if all the fluxes $f_l$ grow
monotonically. This is a nice property of the developed
notions, since also the implicit scheme
respects the direction of the flow.
Note that the $f_l$ do not need
to be Lipschitz continuous to ensure the monotonicity
of the scheme.

\subsection{The implicit Lax-Friedrichs method}
\label{breuss-subsection-4-2}

We investigate the
implicit Lax-Friedrichs scheme
\begin{displaymath}
u_j^{n+1} = 
u_j^n
+
\sum_{l=1}^d
\left\{
\frac{1}{2}
\left[
u_{j-\delta l}^{n+1}-2 u_j^{n+1}+u_{j+\delta l}^{n+1}
\right]
- \frac{\Delta t}{2 \Delta x_l}
\left[
f_l(u_{j+\delta l}^{n+1})-f_l(u_{j-\delta l}^{n+1})
\right]
\right\}
\,.
\end{displaymath}
{\it To condition (\ref{breuss-18}):}
\begin{equation}
\tilde H_l(a+\Delta a,b,c)- \tilde H_l(a,b,c)
=
\frac{1}{2}
\Delta a 
+ \frac{\Delta t}{2 \Delta x_l}
\left[
f_l(a+\Delta a)-f_l(a)
\right]
\,.
\label{breuss-27}
\end{equation}
This expression is not positive or equal to zero
without additional requirements.\\
{\it To condition (\ref{breuss-19}):}
\begin{equation}
\tilde H_l(a,b,c+\Delta c)- \tilde H_l(a,b,c)
=
\frac{1}{2}
\Delta c 
- \frac{\Delta t}{2 \Delta x_l}
\left[
f_l(c+\Delta c)-f_l(c)
\right]
\,.
\label{breuss-28}
\end{equation}
Again this expression is not automatically positive
or equal to zero. The requirements
(\ref{breuss-27}) and (\ref{breuss-28})
can be combined to 
\begin{displaymath}
\frac{
\left|
f_l(x+\Delta x)-f_l(x)
\right|}{\Delta x_l}
\; \leq \;
\frac{\Delta x_l}{\Delta t}
\quad
\forall \, l=1, \, \ldots, \, d
\textrm{ and }
\forall \, \Delta x \geq 0
\,.
\end{displaymath}
Therefore, the implicit Lax-Friedrichs scheme is monotone only
for Lipschitz-con\-ti\-nu\-ous flux functions 
with Lipschitz constants $L_l \leq (\Delta x_l / \Delta t)$.
Note that this can also be read as a condition on the time step
size which does not depend on the dimension,
since each single one of the $2l$ conditions 
(\ref{breuss-18}) and (\ref{breuss-19})
has to be satisfied and no coupling is involved. 
This is quite surprising (a) because it is normally suggested
that the numerical cha\-rac\-te\-ris\-tics include
the whole domain in the case of implicit methods,
and (b) since no dimensional influence on the monotonicity
property is obtained. 
In order to illuminate point (a), we briefly
review the discussion of the situation for the 
case of the linear advection equation without sources in one dimension which
is done in \cite{breuss02} in much more detail.
With respect to point (b), we demonstrate numerically a
similar behavior in two dimensions in order to illustrate
the noted missing dimensional dependence of the
implicit monotonicity criterion.

In the case of a linear flux $f(u)=vu$, the nonlinear system
defined by the implicit Lax-Friedrichs scheme
degenerates to a linear system with $\lambda=\Delta t/\Delta x$ given through
\begin{equation}
\left[ - \frac{1}{2} - v \frac{\lambda}{2} \right] u_{j-1}^{n+1}
+ 2 u_j^{n+1}
+
\left[ - \frac{1}{2} + v \frac{\lambda}{2} \right] u_{j+1}^{n+1}
=
u_j^n
\,.
\label{breuss-29}
\end{equation}
We investigate the structure of the
tridiagonal matrix $A=\left( a_{ij} \right)$
defined by (\ref{breuss-29}). Therefore, let
$v$ be positive with $v>(1/\lambda)$ so that the formal
monotonicity property of the scheme is lost.
Then the entries in the lower diagonal $a_{i+1,i}$
always take on negative values while the entries in the
upper diagonal $a_{i,i+1}$ are always positive. 

We at first eliminate the entries
in the lower diagonal $a_{i+1,i}$. The diagonal entries of the
matrix have to be modified accordingly, i.e. the diagonal
entry in the $i$-th row is modified via
\begin{displaymath}
a_{ii}^{new} = a_{ii}^{old} -
\frac{a_{i,i-1} }{a_{i-1,i-1}}a_{i-1,i}
\,.
\end{displaymath}
Thereby, note that we always have the situation
\begin{displaymath}
a_{i,i-1}<0\textrm{, }
a_{i-1,i-1}>0 \;, \textrm{ and } \;
a_{i-1,i}>0
\,,
\end{displaymath}
so that $a_{ii}^{new}>a_{ii}^{old}$ is always satisfied.
Since the right hand side $(b_i)$ of the investigated system
incorporating the given data is modified via
\begin{displaymath} 
b_i = u_i^n - \frac{a_{i,i-1} }{a_{i-1,i-1}} b_{i-1}
\,,
\end{displaymath}
data sets with $u_k^n \geq 0$ $\forall k$ imply
only positive possible changes in the values $b_i$.
In particular, the values in the upper diagonal
$a_{i,i+1}$ remain unchanged and positive.

We now investigate what happens at a jump in
given data $u_k^n$ from values $0$ to $1$ 
when backward elimination is applied in order to
solve the system. 
Therefore, we fix $u_j^n:=0 \, \forall j<i$ and 
$u_j^n:=1 \, \forall j \geq i$.
By the described procedure, it is clear that the corresponding
entries on the right hand side also show a jump from 
$0$ to $1$ after the modification due to elimination of the
lower diagonal since $b_{i-1}=u_{i-1}^n=0$, so that no positive
update in $b_i$ takes place.
Backward elimination results in
\begin{displaymath}
u_{i-1}^{n+1} = 
\underbrace{
\frac{1}{a_{i-1,i-1}^{new}}}_{>0}
\bigl(
\underbrace{u_{i-1}^n}_{=0}
-
\underbrace{a_{i-1,i}}_{>0}
\underbrace{u_i^n}_{=1}
\bigr)
\, <0
\,,
\end{displaymath}
so that the monotonicity is violated, as expected. 
The violation of the monotonicity
property can also be observed at jumps from high to lower
values within given data. 

Concerning the two-dimensional situation, we consider
the linear advection equation
\begin{displaymath}
\frac{\partial}{\partial t}
u(x,y,t)
+
\frac{\partial}{\partial x}
\left(
v u(x,y,t)
\right)
+
\frac{\partial}{\partial y}
\left(
v u(x,y,t)
\right)
= 
0
\end{displaymath}
with grid parameters $\Delta x=\Delta y=0.1$ and the
initial condition
\begin{displaymath}
u(x,y,0)=
\left\{
\begin{array}{ccc}
1 & \textrm{for} & 
(x,y)\in \left[ 0,1 \right] \times \left[0,1 \right]
\,, \\
0 & \textrm{else.}
\end{array}
\right.
\end{displaymath}
The monotonicity condition yields that the chosen
time step size $\Delta t = 0.1$ is the largest one
allowed for $v=1.0$ in order
to preserve the monotonicity of the scheme,
the same as would be in the one-dimensional case. See
Figure \ref{breuss-figure-lf1}
for a visualization of the monotone and monotonicity-violating
property of the method. Figure \ref{breuss-figure-lf2}
gives a more detailed picture of the latter case.

\begin{figure}[!ht] 
\centering
\includegraphics[height=4.5cm]{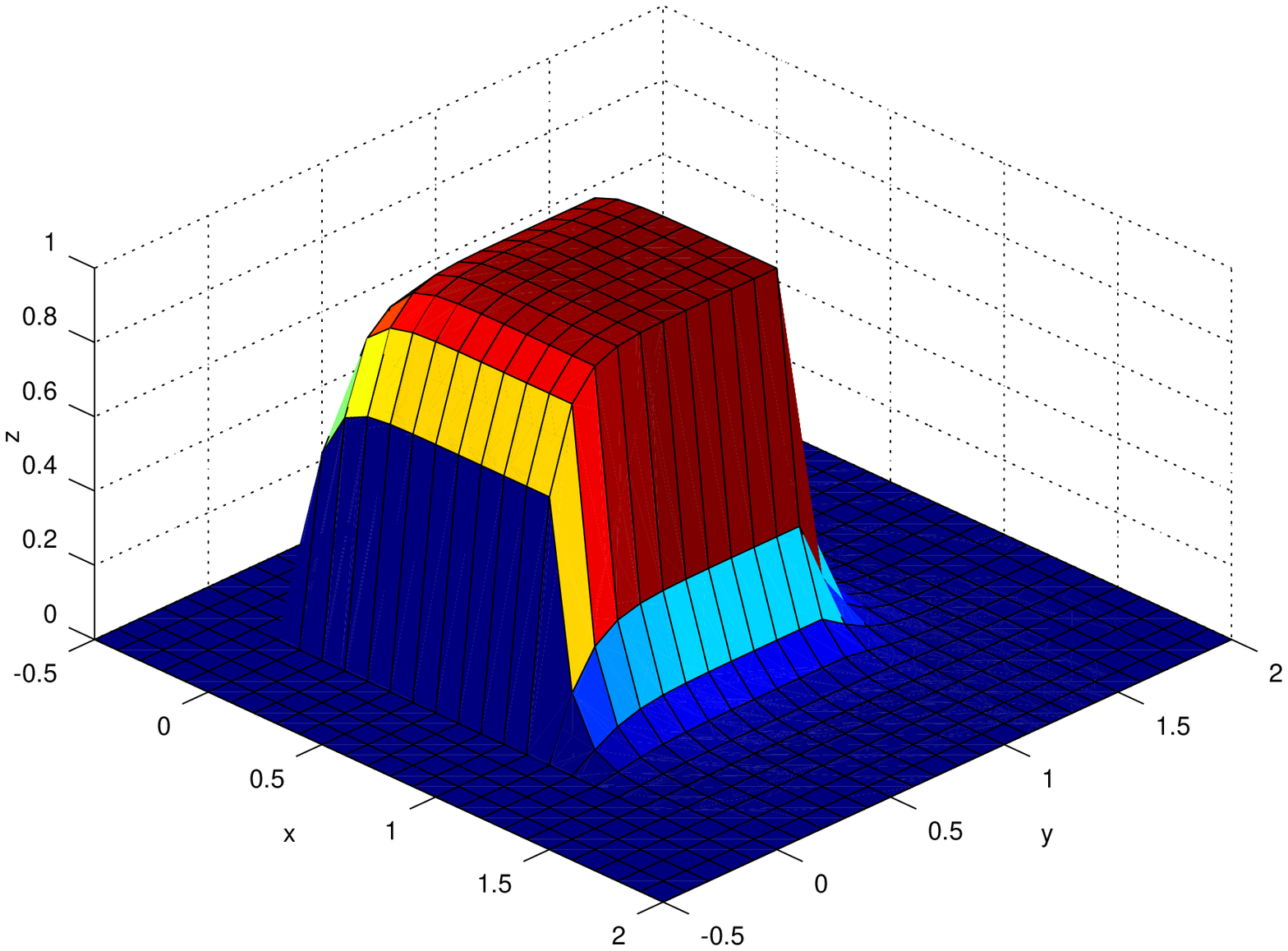}
\includegraphics[height=4.5cm]{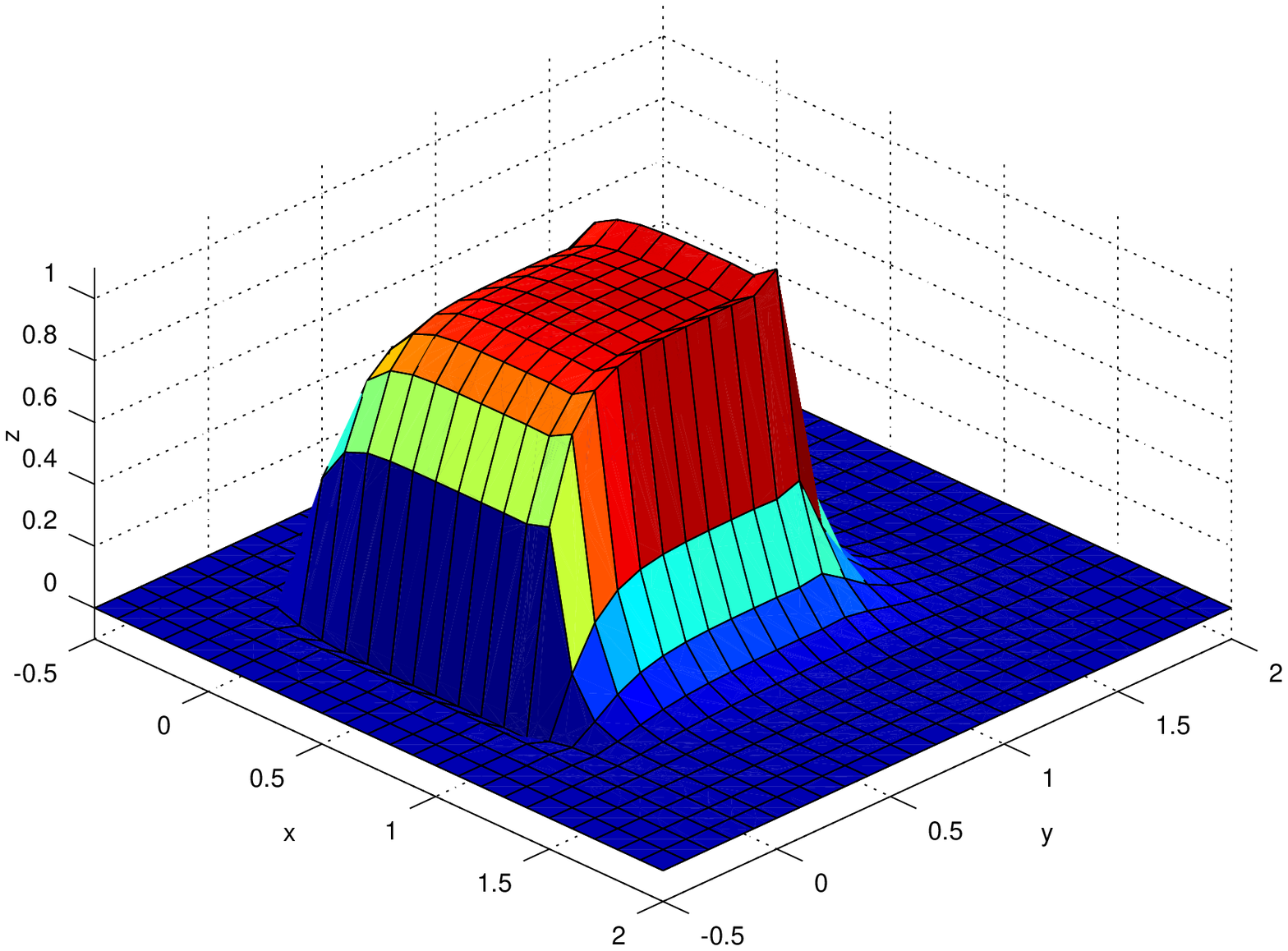}
\caption{Numerical solutions of the linear two-dimensional problem
after one time step with $v=1$ (left) satisfying
the monotonicity condition and with $v=1.5$ (right), 
resulting in a monotonicity violation as in the one-dimensional case.
The same behavior also occurs for
velocities $1<v<1.5$, resulting in
much less amplitudes of the violations.}
\label{breuss-figure-lf1}       
\end{figure}

\begin{figure}[!ht] 
\centering
\includegraphics[height=4.5cm]{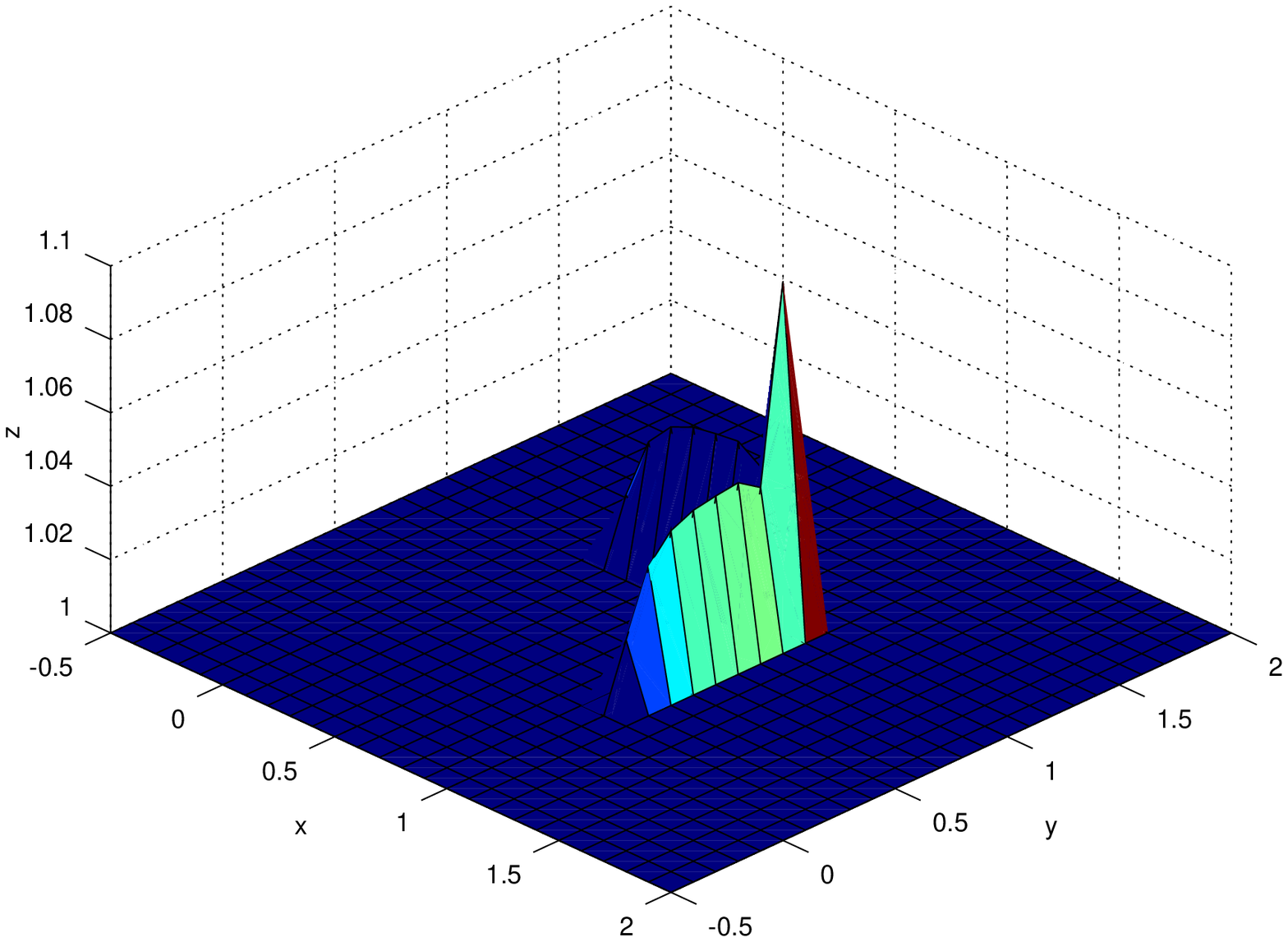}
\includegraphics[height=4.5cm]{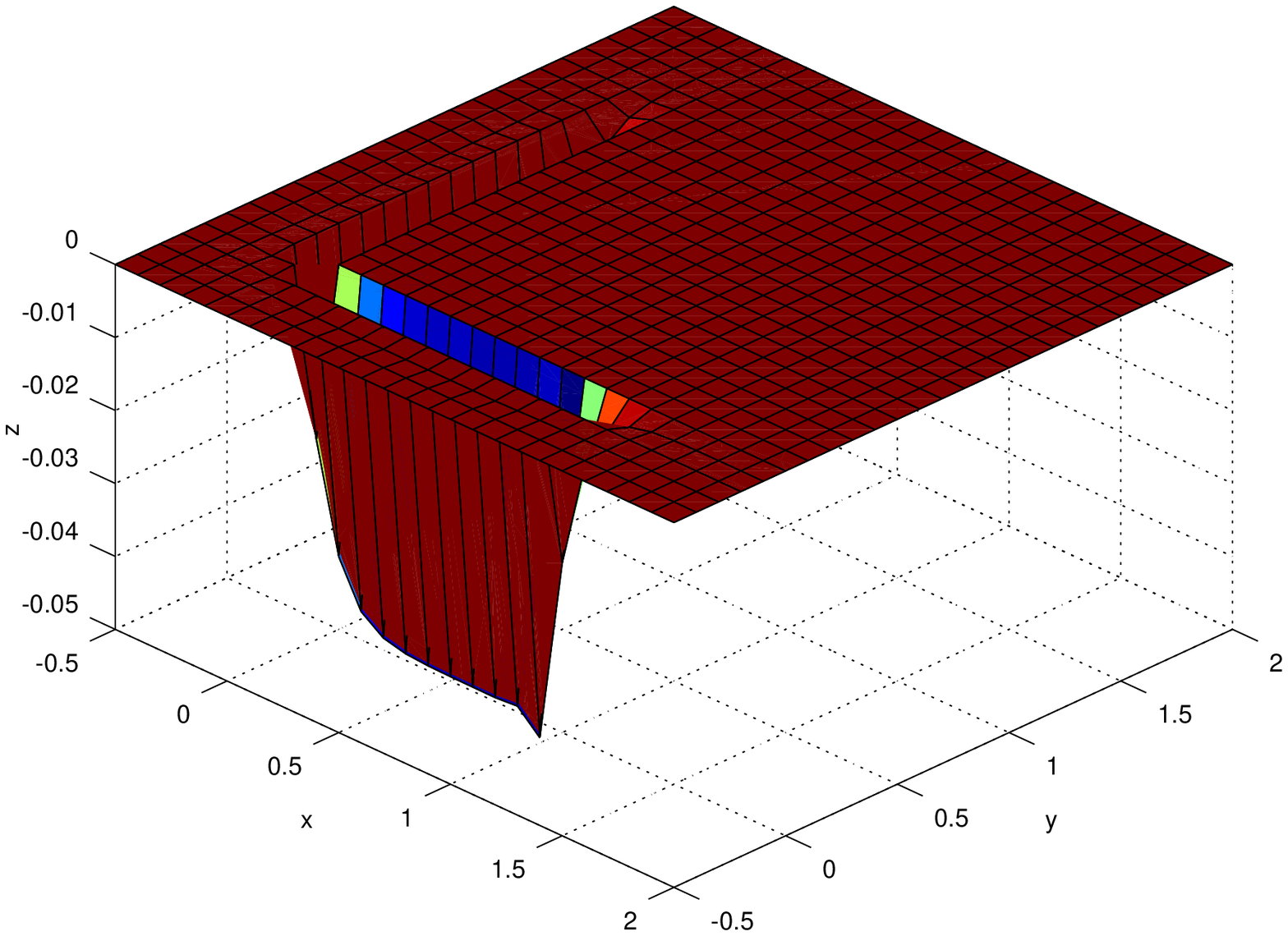}
\caption{Plots showing in detail the monotonicity violation
in the case $\Delta t=\Delta x=\Delta y=0.1$ and $v=1.5$,
obtained after one time step with the implicit Lax-Friedrichs scheme:
The maximum of $1.0$ and the numerical solution (left) and
the minimum of $0.0$ and the numerical solution (right).}
\label{breuss-figure-lf2}       
\end{figure}

\subsection{An implicit Godunov-type method}
\label{breuss-subsection-4-3}

In the scalar case, a closed form of the exact
solution of a Riemann-problem was described
by Osher \cite{osher84}.
Using this, a numerical
scheme can be defined via the $d$ numerical flux functions
\begin{displaymath}
g^G_l(v,w) \; = \;
\left\lbrace
\begin{array}{ccc}
\min_{v \leq u \leq w}f_l(u) & : & v \leq w \,,\\ 
\max_{w \leq u \leq v}f_l(u) & : & v > w \,.
\end{array}
\right.
\end{displaymath}
Since the relative values of the test variables have to be 
compared within the scheme, diversions by cases have to be
employed.\\
{\it To condition (\ref{breuss-18}):}\\
Generally, for $l=1, \, \ldots, \, d$,
\begin{displaymath}
\tilde H_l (a+\Delta a,b,c)
-
\tilde H_l (a,b,c) = 
\frac{\Delta t}{\Delta x_l}
\bigl[
g_l^G(a+\Delta a,b)
-
g_l^G(a,b)
\bigr]
\end{displaymath}
holds. Since only the values $b,\, a$ and $a+\Delta a$
are of importance, it is necessary
to investigate three cases for each $l \in \underline{d}$.
\begin{enumerate}
\item
Case: $b \leq a \leq a+\Delta a$
\begin{displaymath}
\frac{\Delta t}{\Delta x_l} 
\bigl[
g_l^G(a+\Delta a,b)-g_l^G(a,b) \bigr]
=
\frac{\Delta t}{\Delta x_l}
\left[ \,
\max_{b \leq u \leq a+\Delta a} f_l(u)
-
\max_{b \leq u \leq a} f_l(u) \,
\right] \; \geq 0
\,.
\end{displaymath}
\item
Case: $a \leq b \leq a+ \Delta a$
\begin{displaymath}
\frac{\Delta t}{\Delta x_l}
\bigl[
g_l^G(a+\Delta a,b)-g_l^G(a,b) \bigr]
=
\frac{\Delta t}{\Delta x_l} 
\left[ \,
\max_{b \leq u \leq a+\Delta a} f_l(u)
-
\min_{a \leq u \leq b} f_l(u) \,
\right] \; \geq 0
\,.
\end{displaymath}
\item
Case: $a \leq a+\Delta a \leq b$
\begin{displaymath}
\frac{\Delta t}{\Delta x_l}
\bigl[
g_l^G(a+\Delta a,b)-g_l^G(a,b) \bigr]
=
\frac{\Delta t}{\Delta x_l} 
\left[ \,
\min_{a+\Delta a \leq u \leq b} f_l(u)
-
\min_{a \leq u \leq b} f_l(u) \,
\right] \; \geq 0
\,.
\end{displaymath}
\end{enumerate}
Thus, the validity of the condition
(\ref{breuss-18}) is guaranteed without any additional condition
on the flux function. This can be verified analogously
for condition (\ref{breuss-19}), so that the investigated
Godunov-type scheme is monotone for general continuous flux
functions.

\subsection{Convergence}
\label{breuss-subsection-4-4}

Within this section, we prove convergence of the mentioned
schemes under the assumption that the conditions for
monotonicity are fulfilled.
We do this in some detail for the implicit upwind method,
since this is demonstrated in the easiest fashion, 
and we refer to the differences concerning the proofs 
of convergence with respect to the other methods afterwards.
The same holds true with respect to the type of sources
employed in {\it Scenario 2}. Since part of the convergence
proof is technically identical to the proofs in the one-dimensional
case without sources described in
\cite{breuss02}, we refer to that work for more details.

The basic idea of the convergence proofs is
the following. Corresponding to
sequences $\Delta x_l^k \downarrow 0$ for $k \to \infty$,
$l \in \underline{d}$, we construct
a monotonically growing
sequence of discrete initial data. Then by the monotonicity of
the method we get a monotonically growing sequence of
numerical solutions. Since we multiply
the initial function $u_0$ with an arbitrarily chosen but fixed
test function with compact support,
we only have to consider $u_0$ over a finite domain.
Because of the assumption $u_0 \in L_\infty$
and since we have
$L_\infty$-Stability, the corresponding function
sequence is integrable and
bounded from above. Then we can use the well-known
theorem of monotone convergence of Beppo Levi to show
convergence (almost everywhere) to a limit function.
More formally, we state the following
\vspace{0.5ex}

\begin{Theorem}\label{theorem-breuss-4}
Let $u_0(\mathbf{x})$ be in 
$L^{loc}_\infty \left( \mathbb{R}^d ; \, \mathbb{R} \right)$.
Consider a sequence of nested grids indexed by
$k=1,2, \ldots$, with mesh parameters
$\Delta t_k \downarrow 0$ and $\Delta x_l^k \downarrow 0$,
$l=1, \, \ldots, \, d$, as $k \to \infty$,
and let $u_k(\mathbf{x},t)$ denote the step function obtained via
the numerical approximation by a consistent, conservative
and monotone scheme in the form of the discussed methods.
Then $u_k(\mathbf{x},t)$ converges to the unique entropy solution of
the given conservation law as $k \to \infty$.\vspace{1.0ex}
\end{Theorem}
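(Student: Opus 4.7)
The plan is to avoid BV compactness entirely and obtain pointwise a.e.\ convergence of the step functions $u_k$ via the monotone convergence theorem of Beppo Levi, then identify the limit with the unique Kru\v{z}kov entropy solution through the discrete entropy inequality of Theorem \ref{theorem-breuss-3}. Since the weak and entropy formulations (\ref{breuss-7})--(\ref{breuss-8}) only test against $\phi \in C_0^\infty$ with compact support $K \subset \mathbb{R}^d \times [0,T]$, we may restrict attention from the outset to values of $u_0$ and of the solutions inside a fixed bounded set; together with assumption (\ref{breuss-4}) this effectively reduces us to $u_0 \in L^\infty$ on the relevant region.

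For each grid index $k$ I would define a piecewise constant approximation $\tilde u_k^0$ of $u_0$ by assigning to each cell of grid $k$ the essential infimum of $u_0$ over the union of that cell with all coarser cells containing it, so that $(\tilde u_k^0)_{k \geq 1}$ is a.e.\ nondecreasing and converges to $u_0$ a.e.\ by a standard nested-grid argument. Denoting by $u_k^0$ the corresponding discrete data on grid $k$, the monotonicity property (Theorem \ref{theorem-breuss-1}) must now be used to propagate this pointwise ordering forward in time, so that the extended step functions $u_k(\mathbf{x}, t)$ form an a.e.\ nondecreasing sequence at every time level within $[0,T]$. Combined with Theorem \ref{theorem-breuss-2}, which supplies a $k$-uniform $L^\infty$-bound, Beppo Levi then yields an a.e.\ limit $\bar u \in L^\infty$ on $K$ and, by dominated convergence, $u_k \to \bar u$ in $L^1_{loc}$.

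To identify $\bar u$ with the entropy solution I would, for every fixed $k \in \mathbb{R}$, multiply the discrete entropy inequality (\ref{breuss-21}) by the cell volume $\prod_l \Delta x_l^k \cdot \Delta t_k$ and by $\phi(\mathbf{x}_j, t^{n+1}) \geq 0$, then sum over $j \in J$ and over the time indices. A discrete summation by parts shifts the difference operators onto $\phi$, producing discrete analogues of $\phi_t$ and $\phi_{x_l}$ that converge uniformly to the exact derivatives because $\phi \in C_0^\infty$. The nonlinear contributions involving $G_l$ and $|u_{k,j}^n - k|$ pass to the limit by dominated convergence, using $L^\infty$-boundedness, a.e.\ convergence $u_k \to \bar u$, and continuity of $f_l$ from (\ref{breuss-3}) which transfers through (\ref{breuss-23}) to continuity of $G_l$. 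The initial datum term appears through the $n = 0$ boundary of the summation by parts, which couples $u_0$ to the limit via $\tilde u_k^0 \to u_0$ a.e. The resulting inequality is exactly (\ref{breuss-8}), so Kru\v{z}kov uniqueness (\cite{benkru96,kruzhkov70}) forces $\bar u$ to be \emph{the} entropy solution and, since every a.e.\ limit point of $(u_k)$ must coincide with $\bar u$, the entire family converges.

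I expect the main obstacle to lie not in the summation-by-parts bookkeeping but in the monotonicity transfer in the second step: Definition \ref{definition-breuss-1} and Theorem \ref{theorem-breuss-1} only compare two data sequences living on a \emph{single, common} grid, whereas what the Beppo Levi step needs is a comparison across solutions computed on \emph{different} grids $k$ and $k+1$. Reconciling this will likely require a common-refinement or injection argument: embed $u_k^0$ and $u_{k+1}^0$ as discrete data on a joint refinement, run the refined-grid scheme from both, apply Theorem \ref{theorem-breuss-1}, and then verify that the refined evolution of the injected coarse data controls the genuine coarse step function $\tilde u_k(\mathbf{x},t)$ in a manner compatible with nondecreasing limits. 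Making this comparison robust when $\Delta t_{k+1}/\Delta t_k$ is not an integer is where I would invest the most care. A secondary, milder issue is that in \emph{Scenario 1} the flux is merely continuous rather than Lipschitz; this causes no real trouble since $L^\infty$-boundedness confines all arguments to a compact interval on which continuity is automatically uniform.
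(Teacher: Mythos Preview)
Your proposal follows essentially the same route as the paper: infimum-based cell discretization of $u_0$ on nested grids, Beppo Levi monotone convergence to obtain an a.e.\ limit, then summation by parts on the discrete entropy inequality (\ref{breuss-21}) with dominated convergence to recover (\ref{breuss-8}) and conclude via Kru\v{z}kov uniqueness. The cross-grid monotonicity issue you single out is indeed the delicate point; the paper handles it only implicitly through the nested-grid hypothesis (deferring details to \cite{breuss02}), and also inserts an intermediate step of first passing to the limit in the weak form (\ref{breuss-7}) before treating the entropy condition.
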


\begin{proof}
At first, the convergence
to a weak solution of the conservation law
is established, followed by
the verification that this weak solution
is the entropy solution.
For brevity of the notation, we omit the
arguments $( \mathbf{x},t)$ when appropriate.

We employ sequences
$\Delta t_k \downarrow 0$ and $\Delta x_l^k \downarrow 0$,
assuming that the resulting grids are nested in order to compare
data sets of values, i.e. refined grids always
inherit cell borders.

The most important technical detail is
the special discretization of the initial condition 
$u_0 \in L^{loc}_\infty \left( \mathbb{R}^d ; \, \mathbb{R} \right)$.
After a suitable modification on a set of
Lebesgue measure zero,
the initial condition is discretized on cell $j \in J$, i.e.
for
\begin{displaymath}
\mathbf{x} \in 
\left(
(j_1-1) \Delta x_1^0, j_1 \Delta x_1^0
\right]
\times
\ldots
\times
\left(
(j_d-1)\Delta x_d^0, j_d \Delta x_d^0 
\right]
\,,
\end{displaymath}
by
\begin{equation}
u_j^0 \; := 
\;
\inf_{
\mathbf{x}  
\textrm{ in cell }j
}
u_0(\mathbf{x})
\,.
\label{breuss-30}
\end{equation}
Corresponding to the initial data we also define
a piecewise continuous function
\begin{equation} 
u_k (\mathbf{x},0) := u_j^0 \textrm{, }
\mathbf{x} \textrm{ in cell }j
\,.
\label{breuss-31}
\end{equation}
It is a simple matter of classical analysis
to verify that the discretization (\ref{breuss-30}) together with
(\ref{breuss-31}) gives on any compact spatial domain
a monotonically growing function sequence with
\begin{equation}
\lim_{k \to \infty} u_k(\mathbf{x},0) \; = \; u_0(\mathbf{x})
\textrm{ almost everywhere}
\label{breuss-32}
\end{equation}
by application of the theorem of monotone convergence.
In the classical fashion using point values,
we extract discrete test elements $\phi_j^0$
out of a given test function
$\phi \in C_0^\infty \left( \mathbb{R}^{d+1} ; \,\mathbb{R} \right)$.
Additionally, we define for $n \geq 1$ the step function
\begin{displaymath}
u_k(x,t)=u_j^n
\textrm{, }
\mathbf{x} \textrm{ in cell }j
\textrm{, }
t^{n-1}<t \leq t^n
\,.
\end{displaymath}
In the following,
let the test function $\phi$ be chosen arbitrarily but fixed.

Multiplication of the implicit upwind scheme (\ref{breuss-26})
with $\Delta t^k \prod_{l=1}^d \Delta x_l^k$ as well as
with the discrete test element
$\phi_j^{n+1}$, summation over the spatial indices
$j \in J$ and the temporal indices 
$n \geq 0$, and finally summation by parts yields
\begin{eqnarray}
&&
\Delta t^k
\prod_{l=1}^d
\Delta x_l^k 
\left\{
\sum_{j \in J} \sum_{n \geq 0}
\left[
u_j^n \frac{\phi_j^{n+1}-\phi_j^n}{\Delta t^k}
+
\sum_{l=1}^d
f_l(u_j^{n+1})
\frac{ \phi_{j+\delta l}^{n+1}- \phi_j^{n+1}}{\Delta x_l^k}
\right]
\right\} \nonumber\\
& = & 
-
\prod_{l=1}^d
\Delta x_l^k 
\sum_{j \in J} u_j^0 \phi_j^0
+
\Delta t^k
\prod_{l=1}^d
\Delta x_l^k 
\sum_{j \in J} q_j^{n+1} \phi_j^{n+1}
\,.
\label{breuss-33}
\end{eqnarray}  
By the definition of the introduced step functions,
(\ref{breuss-33}) is equivalent to
\begin{eqnarray}
&&
\int_{\mathbb{R}_+} \int_{\mathbb{R}^d}
\left[
u_k( \mathbf{x},t)
\frac{\phi_k (\mathbf{x}, t + \Delta t^k )
-
\phi_k( \mathbf{x},t)}{\Delta t^k}
\right.
\nonumber\\
&&
+
\left.
\sum_{l=1}^d
f_l(u_k( \mathbf{x},t+\Delta t^k))
\frac{\phi_k(\mathbf{x}+\Delta x_l^k, t+\Delta t^k)
- \phi_k(\mathbf{x},t+\Delta t^k)}{\Delta x_l^k}
\right]
\, \mathrm{d}\mathbf{x}\,\mathrm{d}t 
\nonumber\\
& = &
-
\int_{\mathbb{R}^d} u_k(\mathbf{x},0)
\phi_k(\mathbf{x},0) \, \mathrm{d}\mathbf{x}
+
\int_{\mathbb{R}_+} \int_{\mathbb{R}^d}
q_k(\mathbf{x},t+\Delta t^k)
\phi_k(\mathbf{x},t+\Delta t^k) \, \mathrm{d}\mathbf{x}\,\mathrm{d}t
\,.
\label{breuss-34}
\end{eqnarray}
We now prove convergence of (\ref{breuss-34})
to the form which implies that
$u$ is a weak solution of the original problem, 
see (\ref{breuss-7}).

We first investigate the right hand side of (\ref{breuss-34}).
Set $\tilde \Delta:=\max_{l \in \underline{d}}\Delta x_l^0$
and let
\begin{displaymath}
K:= 
\left\{ 
\,
(\mathbf{x},t) 
\, | \, 
\exists (\mathbf{y},t) \in \mathrm{supp}(\phi):
\,
t = 0
\textrm{ and }
y_l 
-
\tilde \Delta
\leq 
x_l
\leq 
y_l + \tilde \Delta 
\textrm{ for all }
l \in \underline{d}
\,
\right\}
\,. 
\end{displaymath}
By construction, $K$ is compact and gives the largest possible
spatial domain where non-zero discrete initial data
may occur. Adding zeroes, we now cast the problem
into a more suitable form, namely
\begin{eqnarray}
\lefteqn{
\int_{\mathbb{R}^d} 
u_k(\mathbf{x},0) \phi_k(\mathbf{x},0) \, \mathrm{d}\mathbf{x}
= 
\int_K
u_0(\mathbf{x})
\phi(\mathbf{x},0) \, \mathrm{d}\mathbf{x}
} \nonumber\\
& &
\quad
+
\int_K u_k(\mathbf{x},0) 
\left[
\phi_k(\mathbf{x},0) - \phi(\mathbf{x},0)
\right]
\, \mathrm{d}\mathbf{x} 
+
\int_K
\left[
u_k(\mathbf{x},0)
- u_0(\mathbf{x})
\right]
\phi(\mathbf{x},0) \, \mathrm{d}\mathbf{x} 
\,.
\label{breuss-35}
\end{eqnarray}
Because of $u_0 \in L^\infty(\mathbb{R}^d; \, \mathbb{R})$
and by our construction,
we can estimate the absolute value of the second right hand side term in
(\ref{breuss-35}) by the help of a constant
$M_u<\infty$:
\begin{equation}
\left|
\int_K u_k(\mathbf{x},0) 
\left[
\phi_k(\mathbf{x},0) - \phi(\mathbf{x},0)
\right]
\, \mathrm{d}\mathbf{x} \right|
\leq
M_u
\mathopen{|} K \mathclose{|}
\sup_{x \in K}
\left|
\phi_k(\mathbf{x},0) - \phi(\mathbf{x},0)
\right|
\,.
\label{breuss-36}
\end{equation}
Since $\phi$ is a smooth test function, it is a simple but
technical exercise to show
\begin{equation}
\| 
\phi_k(\mathbf{x},0)
-
\phi(\mathbf{x},0)
\|_\infty 
\to 0
\quad
\textrm{for}
\quad
k \to \infty
\,.
\label{breuss-37}
\end{equation}
By (\ref{breuss-36}) and (\ref{breuss-37}),
the investigated term tends to zero with $k \to \infty$.
Since $\phi$ is continuous and since
$u_k(\mathbf{x},0)$ approaches $u_0(\mathbf{x})$ from below
by construction, we can estimate
the absolute of the third right hand side
term in (\ref{breuss-35})
with the help of a constant $M_\phi<\infty$ by
\begin{displaymath}
\left|
\int_K
\left[
u_k(\mathbf{x},0)
- u_0(\mathbf{x})
\right]
\phi(\mathbf{x},0) \, \mathrm{d}\mathbf{x}
\right|
\leq
M_\phi
\int_K
u_0(\mathbf{x})
-
u_k(\mathbf{x},0)
\, \mathrm{d}\mathbf{x}
\,.
\end{displaymath}
The theorem of monotone convergence implies that
\begin{displaymath}
\int_K
u_0(\mathbf{x})
- u_k(\mathbf{x},0)
\, \mathrm{d}\mathbf{x}
\end{displaymath}
vanishes in the limit for $k\to \infty$, i.e.
the corresponding term in (\ref{breuss-35})
goes to zero for $k \to \infty$.
To condense these results, we obtain
\begin{displaymath}
\lim_{k \to \infty}
\int_{\mathbb{R}^d} 
u_k(\mathbf{x},0)
\phi_k(\mathbf{x},0) \, \mathrm{d}\mathbf{x}
=
\int_{\mathbb{R}^d} 
u_0(\mathbf{x})
\phi(\mathbf{x},0) \, \mathrm{d}\mathbf{x}
\,.
\end{displaymath}
It remains to show
\begin{displaymath}
\int_{\mathbb{R}_+} \int_{\mathbb{R}^d}
q_k(\mathbf{x},t+\Delta t^k)
\phi_k(\mathbf{x},t+\Delta t^k) \, \mathrm{d}\mathbf{x}\,\mathrm{d}t
\stackrel{k \to \infty}{\longrightarrow}
\int_{\mathbb{R}_+} \int_{\mathbb{R}^d}
q(\mathbf{x},t)
\phi(\mathbf{x},t) \, \mathrm{d}\mathbf{x}\,\mathrm{d}t
\,.
\end{displaymath}
This result can easily be achieved 
by analogously introducing a compact domain 
$S \subset \mathbb{R}^d$
including the support of $\phi$ in space and time,
setting for $n \geq 1$
($n=0$ is not relevant since $q(\cdot, 0)\equiv 0$)
\begin{displaymath}
q_j^n \; := 
\;
\inf_{
(\mathbf{x},t) 
\textrm{ with }
\mathbf{x}
\textrm{ in cell }j
\textrm{ and $t$ in }
\left( t^n-\Delta t^0, t^n \right]
}
q(\mathbf{x},t)
\end{displaymath}
and using a similar manipulation as for the terms
involving $u_0$.

Concerning the left hand side of (\ref{breuss-35}),
adding zeroes and using the attributes of test functions
together with the $L_\infty$-stability of $u_k$
yields that we finally have to show
\begin{equation}
\lim_{k \to \infty}
\int_S
\bigl|
u(\mathbf{x},t)
-
u_k(\mathbf{x},t)
\bigr|
\left|
\phi_t(\mathbf{x},t)
\right|
\, \mathrm{d}\mathbf{x}\,\mathrm{d}t
\stackrel{k \to \infty}{\longrightarrow}
0
\label{breuss-38}
\end{equation}
and also for all $l\in \underline{d}$
\begin{equation}
\lim_{k \to \infty}
\int_S
\bigl|
f_l(u(\mathbf{x},t))
-
f_l(u_k(\mathbf{x},t+\Delta t^k))
\bigr|
\left|
\frac{\partial}{\partial x_l}
\phi(\mathbf{x},t)
\right|
\, \mathrm{d}\mathbf{x} \mathrm{d}t
\stackrel{k \to \infty}{\longrightarrow}
0
\label{breuss-39}
\end{equation}
in order to prove convergence to a weak solution.
Since $\phi_t$ is continuous on $S$, we can estimate
$\left| \phi_t \right|$ in (\ref{breuss-38}) by a
constant $M_t<\infty$.
Since $u_k(\mathbf{x},t)$ grows monotonically with $k \to \infty$
in the sense of pointwise comparison, and since it
is positive and bounded from above because of $u_0 \in L_\infty(S)$
and the monotonicity of the method,
the function sequence 
$\left( u_k( \mathbf{x},t) \right)_{k \in \mathbf{N}}$
converges almost everywhere to an integrable
limit function on $S$
by the theorem of monotone convergence due to Levi.
We set
\begin{displaymath}
u(\mathbf{x},t) \, := \, \lim_{k \to \infty} u_k(\mathbf{x},t)
\,.
\end{displaymath}
Introducing exactly this limit function as the function 
$u(\mathbf{x},t)$
used up to now, the corresponding term in (\ref{breuss-38})
becomes zero in the limit:
\begin{displaymath}
\lim_{k \to \infty}
\int_S
\bigl|
u(\mathbf{x},t)
-
u_k(\mathbf{x},t)
\bigr|
\left|
\phi_t(\mathbf{x},t)
\right|
\, \mathrm{d}\mathbf{x}\,\mathrm{d}t
\leq
M_t
\int_S
u(\mathbf{x},t)
-
\lim_{k \to \infty}
u_k(\mathbf{x},t)
\, \mathrm{d}\mathbf{x}\,\mathrm{d}t
=0
\,.
\end{displaymath}
Note that the pointwise convergence $u_k \to u$ almost everywhere
is now established and can be used in the following proofs.
For proving (\ref{breuss-39}), we need some further
simple manipulations.
We use again the continuity of the derivatives of $\phi$
to introduce constants $M^l_x<\infty$ to obtain
\begin{eqnarray} 
\lefteqn{
\int_S
\bigl|
f_l(u(\mathbf{x},t))
-
f_l(u_k(\mathbf{x},t+\Delta t^k))
\bigr|
\left|
\frac{\partial}{\partial x_l}
\phi(\mathbf{x},t)
\right|
\, \mathrm{d}\mathbf{x}\,\mathrm{d}t \leq }\nonumber \\
&&
M^l_x
\int_S
\left|
f_l(u_k(\mathbf{x},t))
-
f_l(u(\mathbf{x},t))
\right|
\, \mathrm{d}\mathbf{x}\,\mathrm{d}t
\nonumber\\
&&
\;
+
M^l_x
\int_S
\left|
f_l(u_k(\mathbf{x},t+\Delta t^k))
-
f_l(u_k(\mathbf{x},t))
\right|
\, \mathrm{d}\mathbf{x}\,\mathrm{d}t
\label{breuss-40}
\end{eqnarray}
for all $l\in \underline{d}$.
We now discuss the first right hand side 
term in (\ref{breuss-40}).
Since by construction $u_k$ and $u$ are
in $L_\infty(S)$, we can estimate every
$\left| f_l(u_k(\mathbf{x},t+\Delta t^k)) 
-
f_l(u_k(\mathbf{x},t)) \right|$
over $S$ from
above by a constant $M^l_f<\infty$ because of the continuity of 
the $f_l$ on the compact set of possible values.
Then the functions
\begin{displaymath}
M^l_f(\mathbf{x},t):=
\left\lbrace
\begin{array}{ccc}
M^l_f &, & (\mathbf{x},t) \in S \\
0 &, & \textrm{otherwise}
\end{array}
\right.
\,,
\end{displaymath}
are in $L_1(\mathbb{R}^d \times \mathbf{R_+}; \, \mathbb{R})$
and dominate
$\left| f_l(u_k(\mathbf{x},t)) - f_l(u(\mathbf{x},t)) \right|$
for all $l\in \underline{d}$ and all $k$.
Because of the established pointwise convergence $u_k \to u$
a.e., we can apply the theorem of dominated convergence by 
Lebesgue to obtain for all $l$
\begin{equation}
\lim_{k \to \infty}
M_x
\int_S
\left|
f_l(u_k(\mathbf{x},t))
-
f_l(u(\mathbf{x},t))
\right|
\, \mathrm{d}\mathbf{x}\,\mathrm{d}t 
= 
0
\,.
\label{breuss-41}
\end{equation}
Now we discuss the second right hand side
term in (\ref{breuss-40}).
Since by construction $u_k$ is a step function with finite
values on the compact domain $S$, $u_k$ is in $L_1(S)$.
Since the $f_l$ are continuous, also $f_l \circ u_k$ are 
in $L_1(S)$. By the continuity in the mean of $L_1$-functions,
there exist $\delta_l(\epsilon)$ for all $\epsilon>0$ with
\begin{displaymath}
\int_S
\left|
f_l(u_k(\mathbf{x},t+\Delta t^k))
-
f_l(u_k(\mathbf{x},t))
\right|
\, \mathrm{d}\mathbf{x}\,\mathrm{d}t \, < \, \epsilon\,,
\end{displaymath}
if $\Delta t^k < \delta_l(\epsilon)$. Since $\Delta t^k \downarrow 0$
for $k \to \infty$, $\epsilon$ can be chosen arbitrarily small,
i.e. 
\begin{equation}
M_x
\int_S
\left|
f_l(u_k(\mathbf{x},t+\Delta t^k))
-
f_l(u_k(\mathbf{x},t))
\right|
\, \mathrm{d}\mathbf{x}\,\mathrm{d}t
\, \to \,
0 \;
\textrm{ for } \; k \to \infty
\label{breuss-42}
\end{equation}
holds for all $l \in \underline{d}$.
By (\ref{breuss-41}) and (\ref{breuss-42}) the assertion
in (\ref{breuss-39}) is proven. Since the test element
$\phi$ was chosen arbitrarily, convergence to a weak
solution is established.

We have now to show that exactly this weak solution
is the unique entropy solution in the sense of Kru\v{z}kov.
Therefore, we derive in a similar fashion as in the derivation of
(\ref{breuss-33}) the weak form of the discrete entropy
condition (\ref{breuss-21}) connected with the implicit upwind
scheme using Lemma \ref{lemma-breuss-1} and
Theorem \ref{theorem-breuss-2}. It reads
\begin{eqnarray}
&&
-
\Delta t^k
\prod_{l=1}^d
\Delta x_l^k
\sum_{j \in J}
\mathopen{|}
u_j^0 - k
\mathclose{|}
\phi_j^0 
-
\Delta t^k
\prod_{l=1}^d
\Delta x_l^k
\sum_{j \in J}
\sum_{n \geq 0}
\mathrm{sgn}
\left(
u_j^{n+1} - k
\right)
q_j^{n+1}
\phi_j^{n+1}
\nonumber\\
& \leq &
\Delta t^k
\prod_{l=1}^d
\Delta x_l^k
\sum_{j \in J}
\sum_{n \geq 0}
\left[
\mathopen{|}
u_j^n - k
\mathclose{|}
\frac{\phi_j^{n+1}-\phi_j^n}{\Delta t^k}
\right.
\nonumber\\
& &
\qquad
+
\mathrm{sgn}
\left( 
u_j^{n+1} - k 
\right)
\sum_{l=1}^d
\left\{
\left[
f_l(u_j^{n+1}) - f_l(k)
\right]
\frac{ \phi_{j+1}^{n+1}- \phi_j^{n+1}}{\Delta x_l^k}
\right\}
\Biggr]
\,.
\label{breuss-43}
\end{eqnarray}
Using the established convergence $u_k \to u$ a.e.
of the function sequence
ge\-ne\-ra\-ted by the numerical
method for $\Delta t^k \downarrow 0$
and $\Delta x_l^k \downarrow 0$ for all $l \in \underline{d}$,
we now prove convergence of (\ref{breuss-43})
towards the form of the
entropy condition due to Kru\v{z}kov (\ref{breuss-8}).
Therefore, we have to consider
arbitrarily chosen but fixed test elements
composed of a test function $\phi$ with
$\phi \geq 0$, 
$\phi \in C_0^\infty(\mathbb{R}^{d+1}; \, \mathbb{R})$,
and a test number $k \in \mathbb{R}$.

Using the same notation and applying a similar
procedure as in the case of the
convergence proof to a weak solution, we first
want to prove
\begin{equation}
\lim_{k \to \infty}
M_\phi
\int_K
\bigl|
\left| u_k(\mathbf{x},0)-k \right|
-
\left| u_0(\mathbf{x}) - k \right|
\bigr|
\, \mathrm{d}\mathbf{x}
= 0
\,.
\label{breuss-44}
\end{equation}
Since $k$ is fixed and $u_k( \mathbf{x},0)$ and $u_0$ bounded,
one can find a constant function over the compact interval
$K$ which dominates the integrand
for all $k$.
Then (\ref{breuss-44}) follows
by the use of the already
established convergence $u_k(x,0) \to u_0(x)$
a.e. and the theorem of dominated convergence by Lebesgue.
We also have to treat
\begin{displaymath}
\lim_{k \to \infty}
\int_{\mathbb{R}_+}
\int_{\mathbb{R}^d}
\mathrm{sgn}
\left(
u_k(\mathbf{x},t+\Delta t^k) - k
\right)
q_k(\mathbf{x},t+\Delta t^k)
\phi(\mathbf{x},t+\Delta t^k)
\,
\mathrm{d}\mathbf{x}\,\mathrm{d}t
\,.
\end{displaymath}
Therefore, we expand the factor $\phi(\mathbf{x},t+\Delta t^k)$
by adding zeroes in the form
\begin{displaymath}
\phi(\mathbf{x},t+\Delta t^k) =
\phi(\mathbf{x},t+\Delta t^k)
-
\phi(\mathbf{x},t)
+
\phi(\mathbf{x},t)
\,.
\end{displaymath}
Convergence of the integrals involving the factor
$\phi(\mathbf{x},t+\Delta t^k) - \phi(\mathbf{x},t)$ tend to zero. This
follows by estimating $\mathrm{sgn}$, $u_k$ and $q_k$ from above and using
the usual properties of test functions.
In a similar fashion, we expand the factor
$\mathrm{sgn} \left( u_k(\mathbf{x},t+\Delta t^k) - k \right)$, adding
zero in the form 
$-\mathrm{sgn} \left( u_k(\mathbf{x},t) - k \right)
+\mathrm{sgn} \left( u_k(\mathbf{x},t) - k \right)$.
The proof that the integrals involving 
$\mathrm{sgn} \left( u_k(\mathbf{x},t+\Delta t^k) - k \right)
- \mathrm{sgn} \left( u_k(\mathbf{x},t) - k \right)$
vanish follows from the continuity in the mean of $L_1$-functions. 
Again similarly, we expand in the form
$q_k(\mathbf{x},t+\Delta t^k)=
q_k(\mathbf{x},t+\Delta t^k)
-
q(\mathbf{x},t+\Delta t^k)
+
q(\mathbf{x},t+\Delta t^k)$
and use the concept of monotone convergence due to
Beppo Levi to obtain convergence to zero of the integrals involving 
$q_k(\mathbf{x},t+\Delta t^k) - q(\mathbf{x},t+\Delta t^k)$.
Lastly, the proof of convergence of 
$q(\mathbf{x},t+\Delta t^k)$ to $q(\mathbf{x},t)$
under the integral follows from the continuity in the mean
of $L_1$-functions. The technical details only require
to take all expansions obtained by taking suitable zeroes
into account and eliminating all integrals
which involve discrete notions.
The other terms left to investigate are
\begin{eqnarray}
&&
\int_S
\left|
u_k(\mathbf{x},t)-k
\right|
\phi_t( \mathbf{x},t)
\, \mathrm{d}\mathbf{x}\,\mathrm{d}t \quad \textrm{and} \nonumber\\
&&
\int_S
\sum_{l=1}^d
\mathrm{sgn} \left[
u_k(\mathbf{x},t+\Delta t^k)-k
\right]
\bigl[
f_l(u_k(\mathbf{x},t+\Delta t^k))
-
f_l(k)
\bigr]
\frac{\partial}{\partial x_l}
\phi(\mathbf{x},t)
\, \mathrm{d}\mathbf{x}\,\mathrm{d}t
\,. \nonumber
\end{eqnarray}
The procedure is the same in both cases. Since the occurring
derivatives of $\phi$ are continuous, we can estimate these
over the compact domain $S$ by finite constants.
Since $k$ is a fixed value 
(and so is $f_l(k)$ for all $l\in \underline{d}$),
since $u_k(\mathbf{x},t)$ is bounded and
because the $f_l$ are
continuous over the bounded interval of possible
values of $u_k$ (due to the established $L_\infty$-stability),
we can also give constants which
estimate all the expressions involving $u_k$ from above.
Using the product of these finite constants as dominating
function over $S$ as well as $u_k \to u$ a.e., we employ 
the theorem of dominated convergence to receive the desired result
for the implicit upwind scheme.

In the case of the implicit Lax-Friedrichs method,
we have to assume Lipschitz continuity with a Lipschitz 
constant $L \leq 1/\lambda$ of the flux functions
so that the method is monotone.
In comparison to the implicit upwind method,
the difference in the
corresponding weak forms are made up from 
\begin{displaymath}
-\frac{\Delta t_k}{2}
\int_S 
u_k(\mathbf{x}, t+\Delta t^k) 
\hat \phi \, \mathrm{d}\mathbf{x}\,\mathrm{d}t
\textrm{ and }
-\frac{\Delta t_k}{2} \int_S 
\left| u_k(\mathbf{x}, t+\Delta t^k)
 - k 
\right| \hat \phi 
\, \mathrm{d}\mathbf{x}\,\mathrm{d}t
\,,
\end{displaymath}
respectively. Thereby, $\hat \phi$ converges in the $L_\infty$-Norm
to $\partial^2_{x_l^2}\phi(\mathbf{x},t)$ which is continuous since
$\phi \in C_0^\infty(\mathbb{R}^{d+1}; \, \mathbb{R})$.
Thus, the corresponding term can be estimated from above
by a constant over the compact domain $S$.
Since $k$ is fixed and $u_k(\mathbf{x},t)$ is bounded as usual,
both expressions vanish with $\Delta t^k \downarrow 0$.

With respect to the described
implicit Godunov-type method, we use a similar procedure
as in the case of the implicit Lax-Friedrichs methods,
namely to write down the differences in the weak forms to 
the case of the implicit upwind method.
These are made up from
\begin{eqnarray}
& &
\Bigl\{
\bigl[
g_l^G(u_j^{n+1} \vee k, u_{j+\delta l}^{n+1} \vee k)-
g_l^G(u_j^{n+1} \wedge k, u_{j+\delta l}^{n+1} \wedge k)
\bigr]
\Bigr. \nonumber\\
&& 
\quad
\Bigl.
- \mathrm{sgn}
\left(
u_j^{n+1}
-k
\right)
\bigl[
f_l(u_j^{n+1})- f_l(k)
\bigr]
\Bigr\}
\frac{\phi_{j+\delta l}^{n+1}-\phi_j^{n+1}}{\Delta x_l^k}
\label{breuss-45}\\
&&
\textrm{and}
\quad
\bigl[
g_l^G(u_j^{n+1}, u_{j+1}^{n+1})-f_l(u_j^{n+1})
\bigr]
\frac{\phi_{j+\delta l}^{n+1}-\phi_j^{n+1}}{\Delta x_l^k}
\,.
\label{breuss-46}
\end{eqnarray}
Since $g_G$ is continuous in the components and
$u_k(\mathbf{x},t) \in L_1(S)$, the $g_l^G \circ u_k$ are also in
$L_1(S)$. After introducing step functions as usual,
the expressions incorporating $g_l^G$ from (\ref{breuss-45})
give values $f_l(\xi_l)$ with
\begin{displaymath}
\begin{array}{cc}
 & \xi_l \in \left[ u_k(\mathbf{x}, t+\Delta t^k),
u_k(\mathbf{x}+\Delta x_l^k,t+\Delta t^k) \right] \\
\textrm{or} &
\; 
\xi_l \in \left[ u_k(\mathbf{x} + \Delta x_l^k, t+\Delta t^k),
u_k(\mathbf{x},t+\Delta t^k) \right] \,,
\end{array}
\end{displaymath}
respectively. The integrals over the terms 
corresponding to (\ref{breuss-45})
then go to zero with $k \to \infty$ because of the
continuity in the mean of $g_l^G \circ u_k$. The idea for proving
convergence to zero concerning the integral of
the expressions corresponding to (\ref{breuss-46}) is the same.

Concerning {\it Scenario 2}, the described strategy is fully
transferable by employing accordingly the notions developed
in section \ref{breuss-section-3}. 
\end{proof}

\section{Numerical tests}
\label{breuss-section-5}

In order to show the applicability of the developed notions,
we investigate numerically a number of test cases 
which were employed within the literature in 
various contexts. In contrast to the cited examples from
\cite{santosoliveira99,grlebano97} we do not employ any further
manipulations of the problem or on the numerical side, we simply
rely on the straightforward application of the implicit
Godunov-type method in all cases.

We remark that the notions we developed reduce in the
one-dimensional case without sources to the notions described
within \cite{breuss01,breuss02}. In that works, especially the 
applicability of implicit schemes with respect to
a conservation law given in \cite{krupan91}
was shown where the solution features a rarefaction
wave extending in an arbitrarily small time step
to infinity. Thus, the applicability of the described concept
is already established in the case without sources,
where the flux is merely continuous and where
a meaningful CFL-condition does not exist. With the
numerical tests documented in this section, we focus on the theoretical
extensions developed in this paper.

First, we employ a
one-dimensional conservation law featuring as a particular problem
a point source depending on space and time. This test case 
was used in \cite{santosoliveira99}
to show experimentally convergence to the entropy solution.
In contrast to the scheme used in their work, for our
scheme convergence to the entropy solution is guaranteed. 

At second, we consider a couple of model problems
featuring spatial dependent sources having the form of the
derivative of certain functions. These model problems were used in 
\cite{grlebano97} in order to show numerically
convergence to steady state solutions featuring various
difficulties which is in contrast
to the first unsteady example. Moreover, since we use
the described Godunov-type scheme, we do not rely on a
CFL-like condition as in \cite{grlebano97}
which greatly restricts the time step size,
an annoying aspect in steady state calculations.
Note also that we simply employ the described implicit method
without any further improvements as done with the 
explicit method employed
in \cite{grlebano97} for which convergence was not guaranteed.

While the first two examples could be identified as belonging
to {\it Scenario 1}, the third test case refers to 
{\it Scenario 2}. It consists of a one-dimensional model problem
featuring a parameter dependent source term depending also 
in a nonlinear way on the solution. This model problem
was used by LeVeque and Yee \cite{levequeyee90}
to illuminate numerical difficulties in the case of stiffness.

As fourth and last example, we show numerical results
of a two-dimensional problem used in \cite{grsome02} which exhibits
all principal difficulties encountered when dealing
with hyperbolic equations. As in the second example,
the implicitness of our methods is advantageous 
in order to calculate the steady state solution.

In all examples, nonlinear systems of equations arise
which have been solved numerically with an iterative solver. Precisely, we used the MINPACK subroutine \textit{hybrd}.

\subsection*{Example 1}

The one-dimensional scalar conservation law under consideration is 
\begin{displaymath}
\frac{\partial}{\partial t} u(x,t) 
+
\frac{\partial}{\partial x}
u (x,t) = \sin(\pi t)\delta(x-0.1)
\,, 
\quad
x \in (0,1)
\,, \;
t>0
\,, 
\end{displaymath}
\begin{displaymath}
\textrm{with} \quad
u_0(x)=u(x,0)=0 \; \forall x \in (0,1)
\quad
\textrm{and}
\quad
u(0,t)=0 \; \forall t \geq 0
\,.
\end{displaymath}
\begin{figure}[!ht]
\centering
\includegraphics[height=4.5cm]{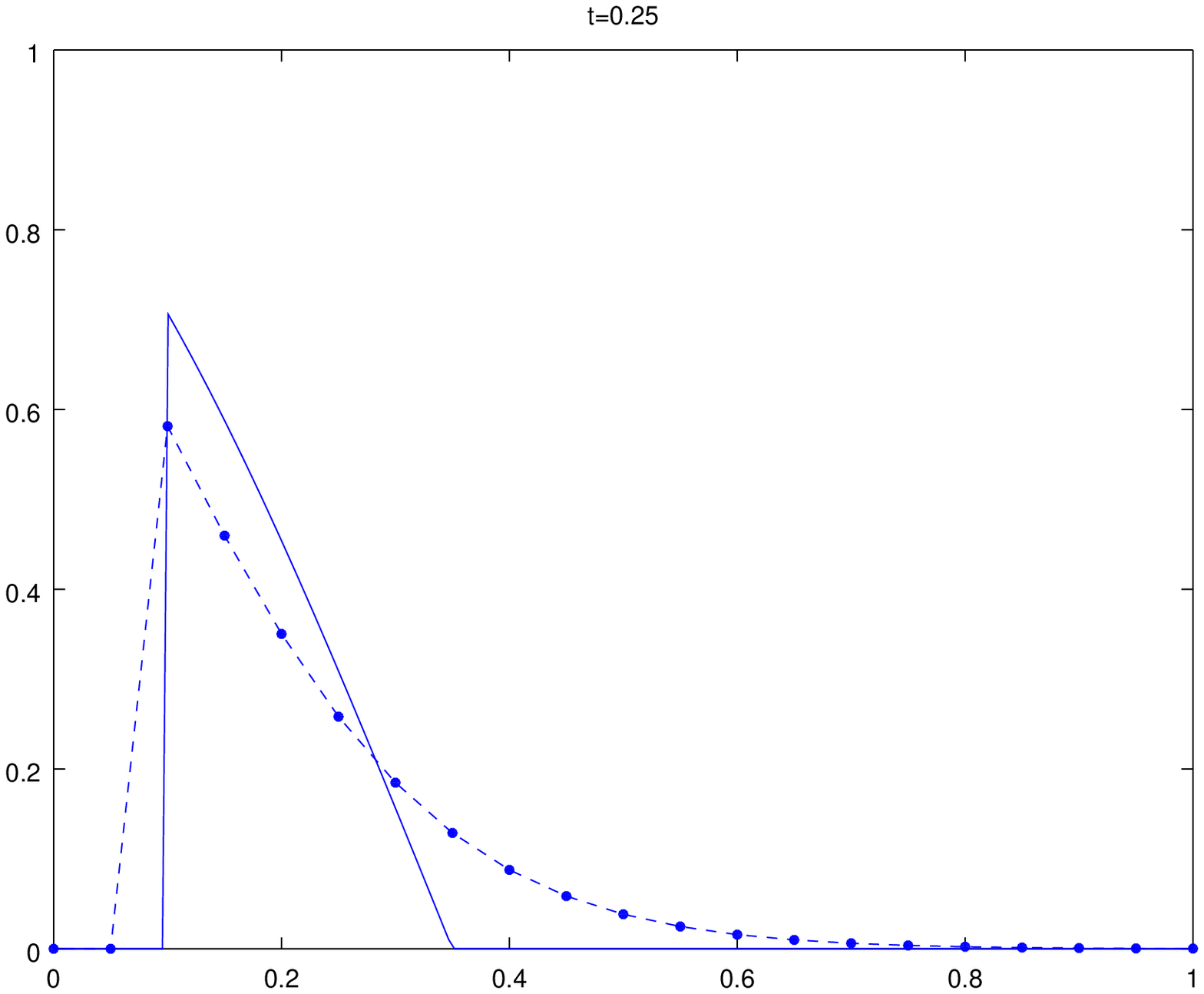}
\includegraphics[height=4.5cm]{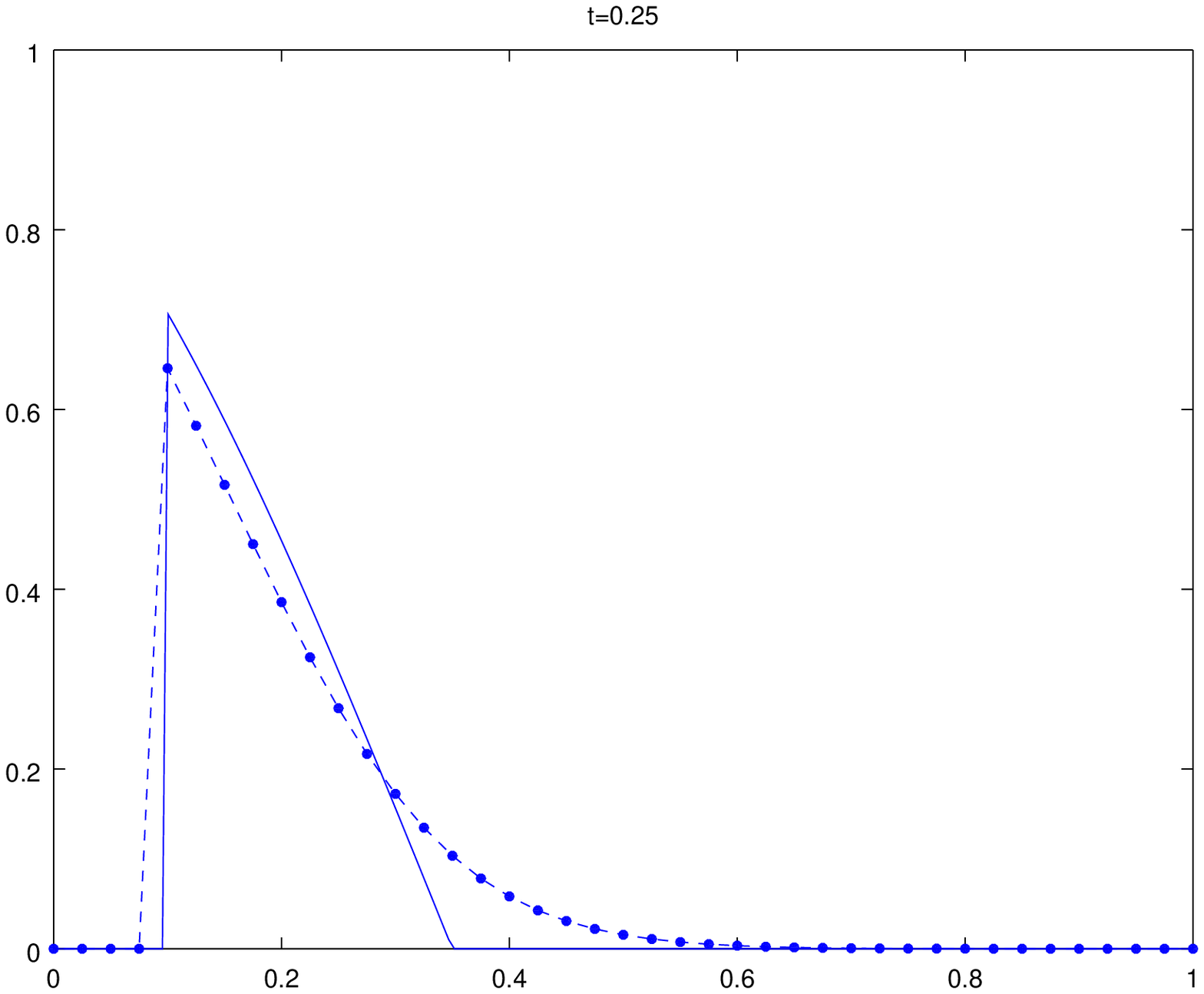}
\includegraphics[height=4.5cm]{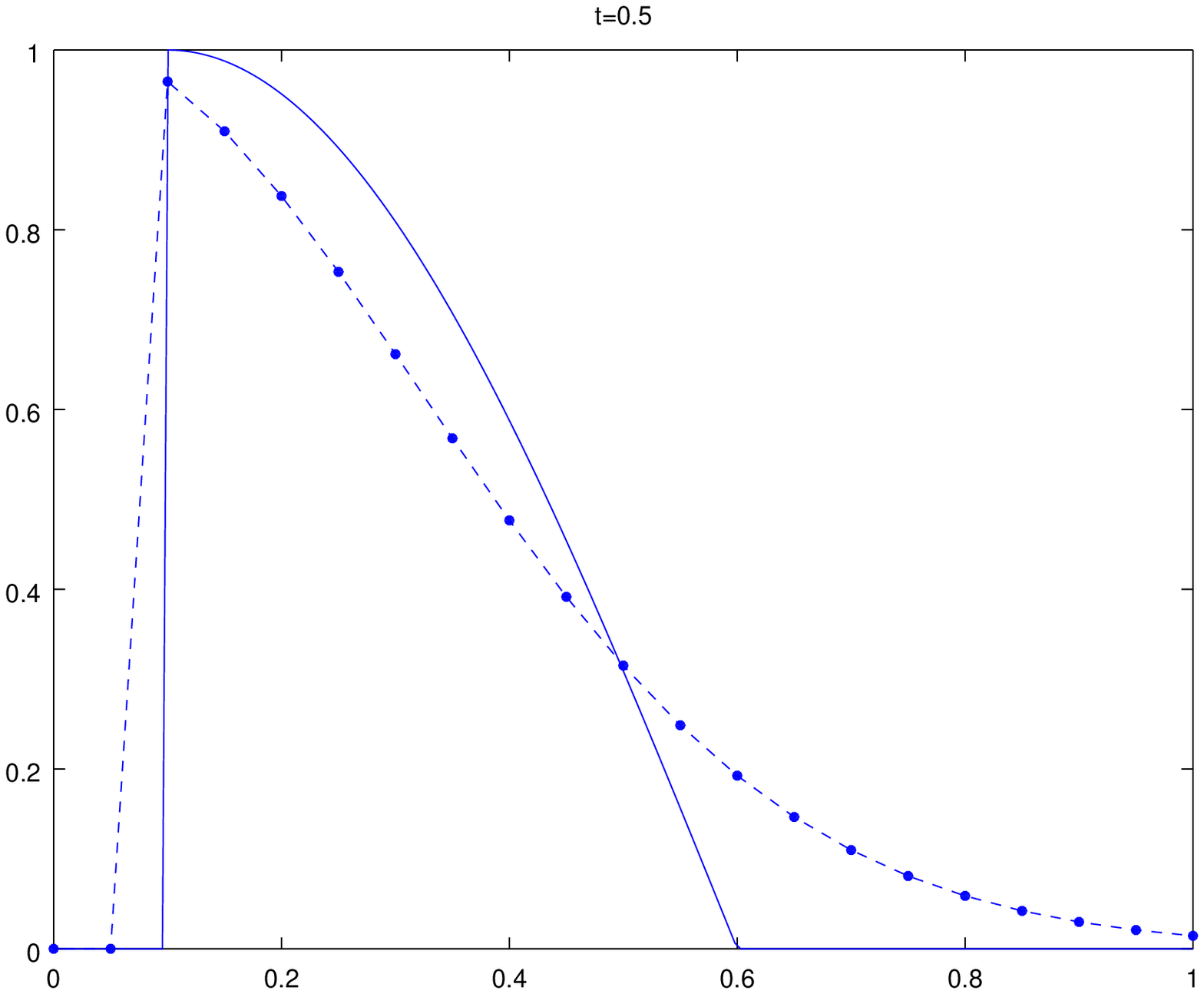}
\includegraphics[height=4.5cm]{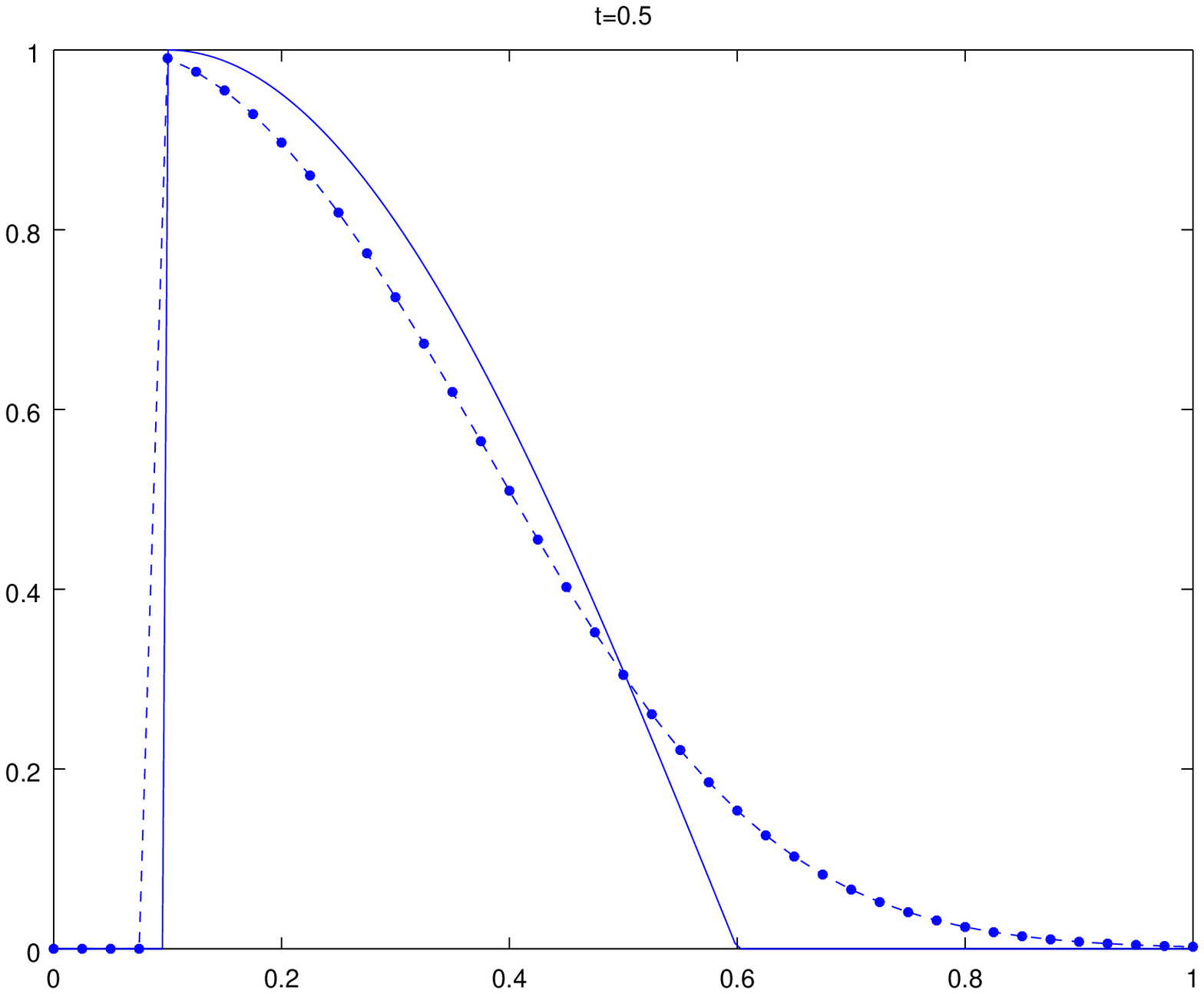}
\includegraphics[height=4.5cm]{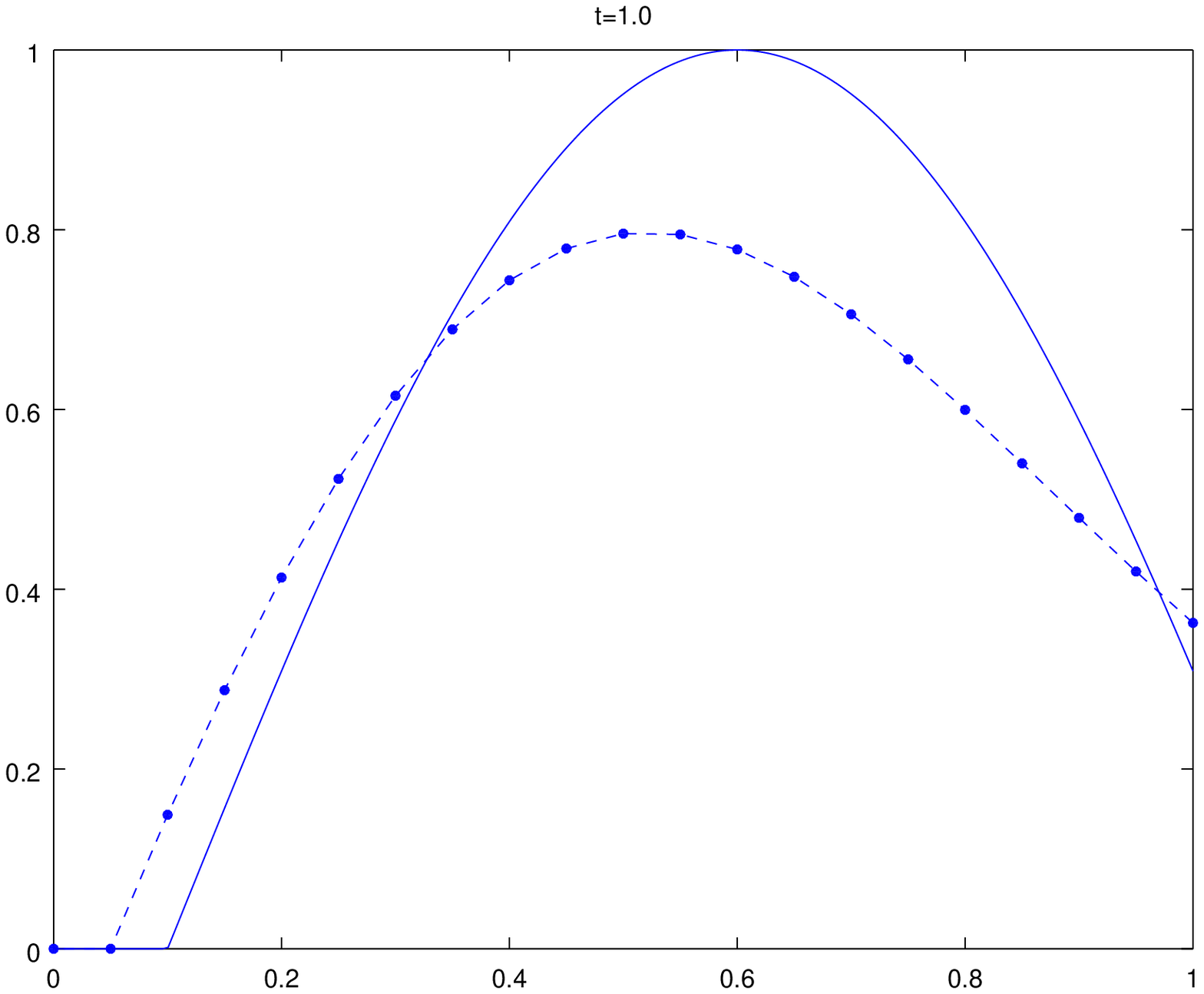}
\includegraphics[height=4.5cm]{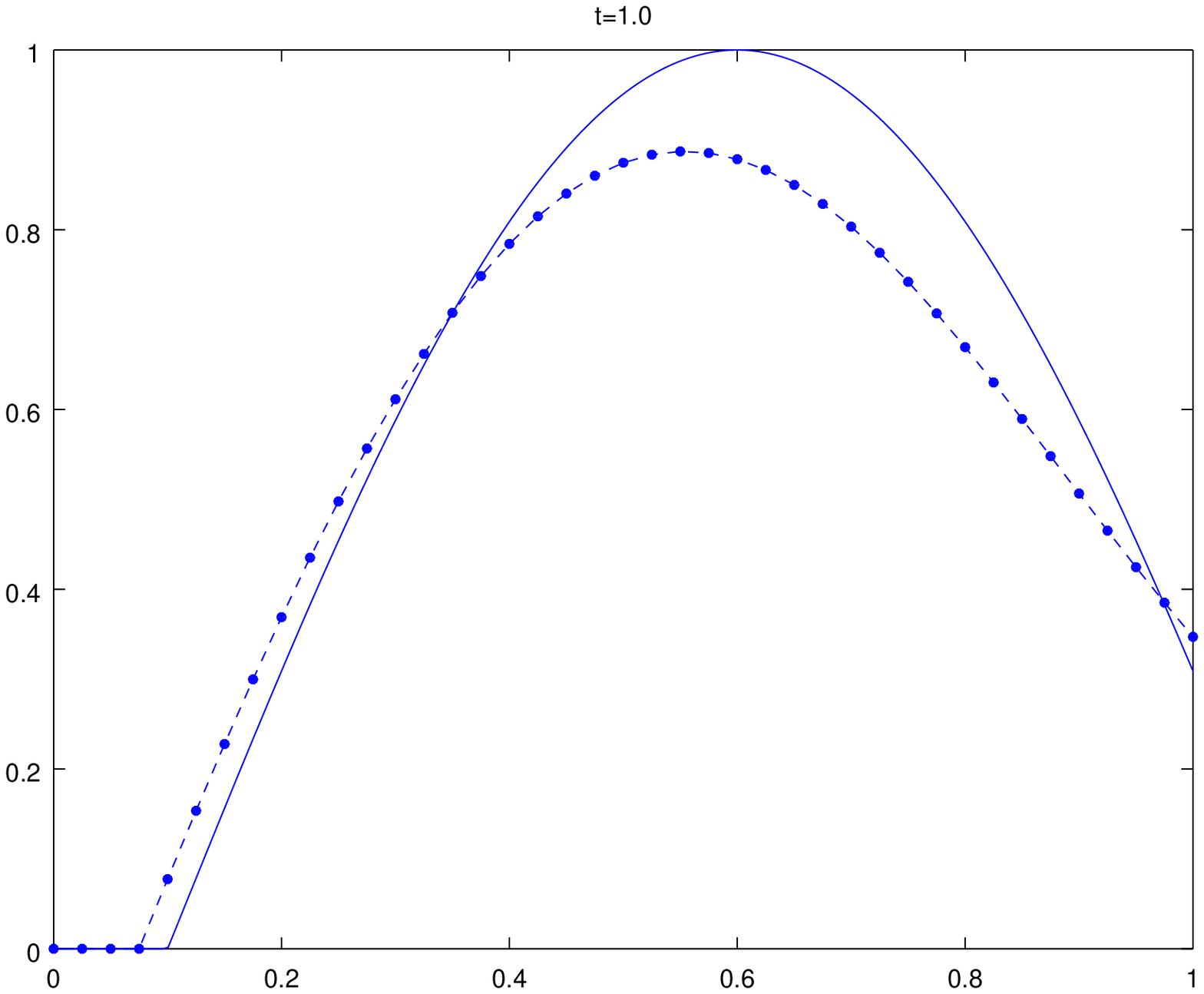}
\caption{The columns show the numerical solutions (dashed lines)
in comparison with the exact solution (continuous lines).
The situations displayed in the right column are obtained using 
a grid twice as fine as in the left column.}
\label{breuss-figure-so1}
\end{figure}
The exact solution is given in \cite{santosoliveira99}
and reads
\begin{displaymath}
u(x,t) \; = \;
\left\lbrace
\begin{array}{ccc}
u_0(x-t) 
& : & 
x<0.1 \textrm{ or } x \geq 0.1+t
\,,\\
\sin \left( \pi (0.1+t-x) \right)
+ u_0(x-t)
& : & 
0.1 \leq x < 0.1+t
\,.
\end{array}
\right.
\end{displaymath}
By Figure \ref{breuss-figure-so1}, we can compare
the exact and numerical solutions obtained with the implicit
upwind method in the same situations as displayed 
in \cite{santosoliveira99}, using also exactly the same
grid parameters. They used $\Delta x=\Delta t=1/20$ and $\Delta x=\Delta t=1/40$ in the three moments $t=1/4$, $t=1/2$, and $t=1$.

Relating to the
method used in \cite{santosoliveira99}, our scheme
is overall much more viscous. This is as expected since
Santos and Oliveira especially sought a good accuracy
of their method. We also observe experimentally
convergence to the correct solution by our method, 
documented by the bottom pictures
within Figure \ref{breuss-figure-so2}
showing results of analogous computations with a
more refined grid.
\begin{figure}[!ht]
\centering
\includegraphics[height=4.5cm]{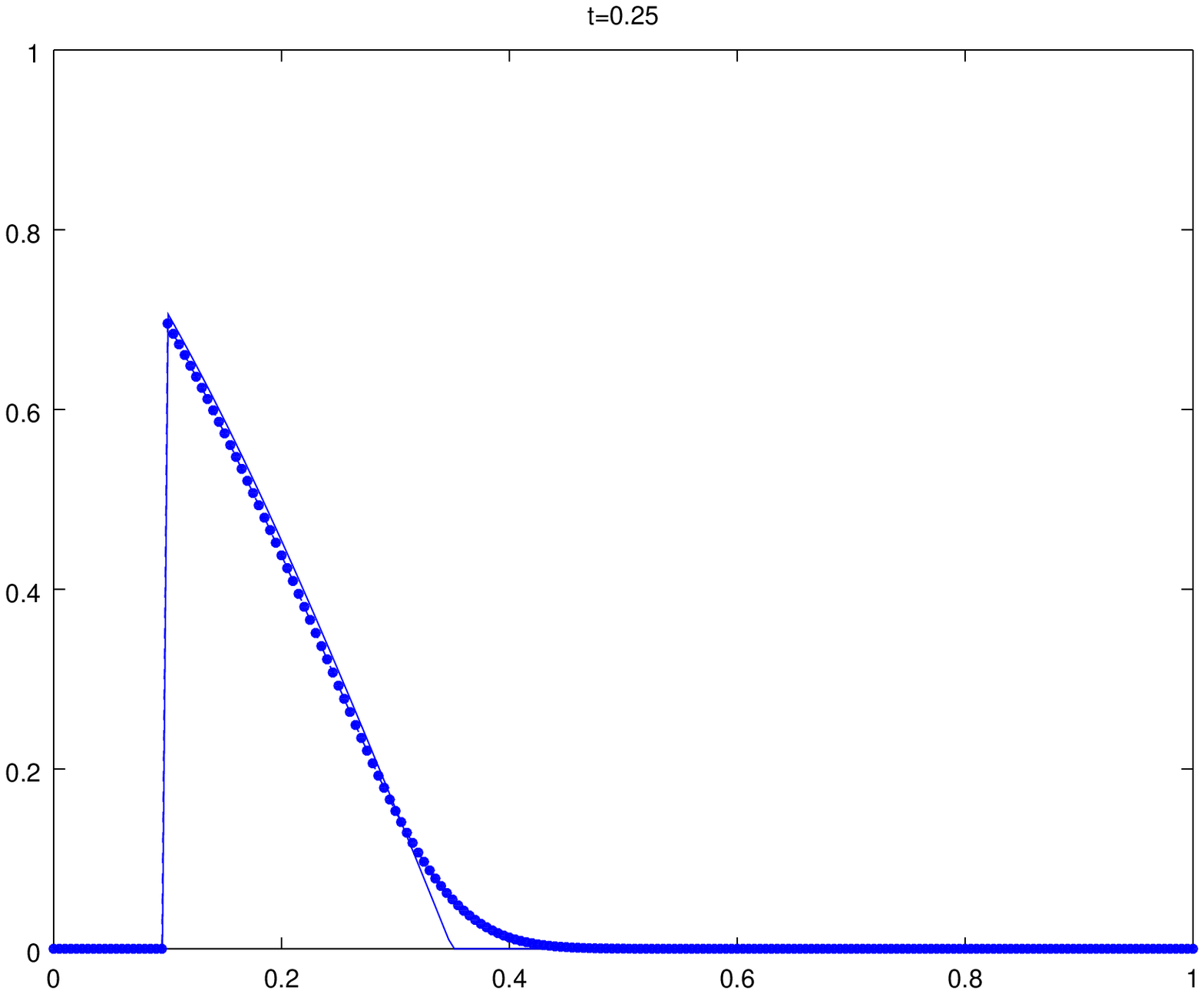}
\includegraphics[height=4.5cm]{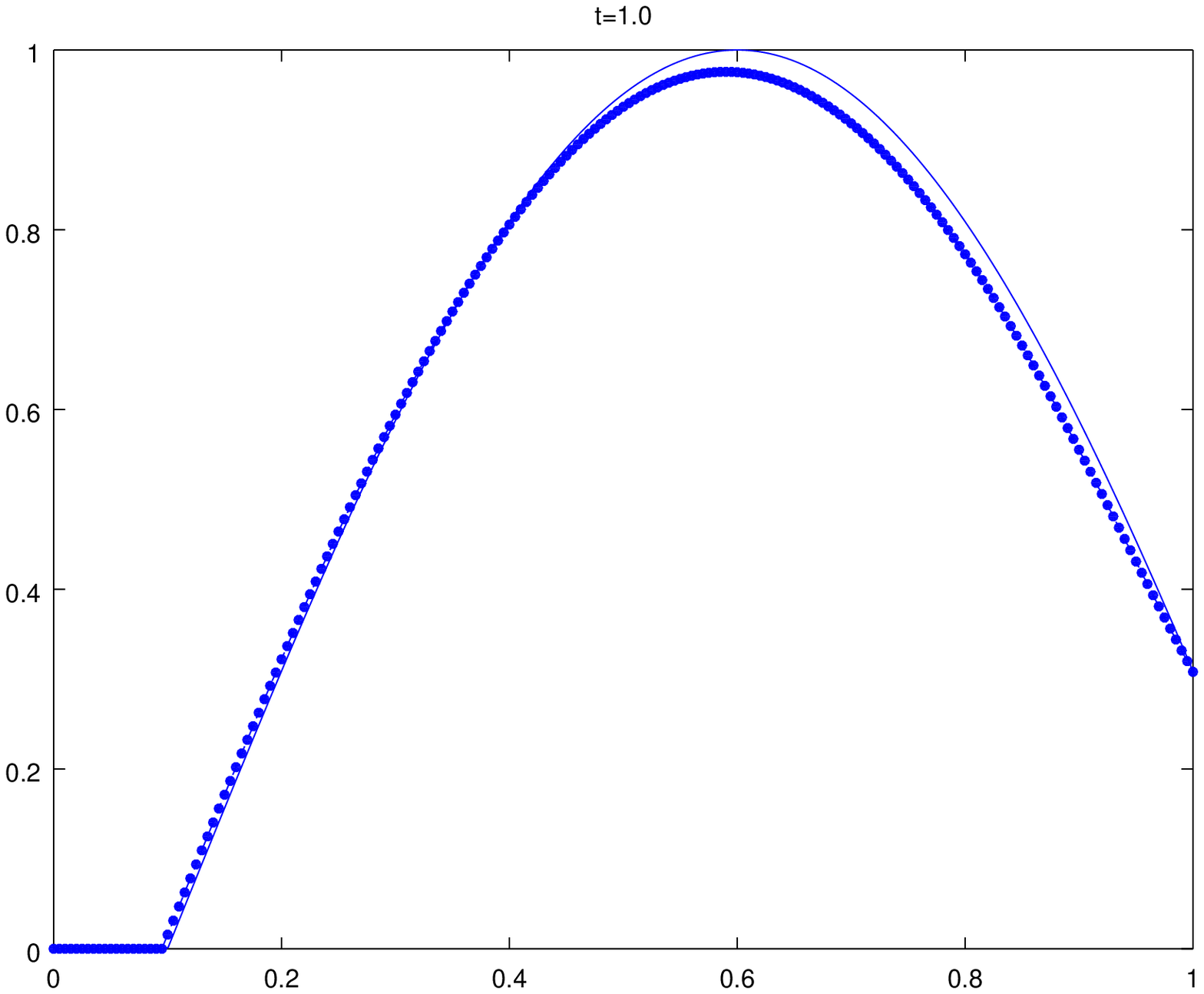}
\caption{The first and third state from Figure
\ref{breuss-figure-so1} revisited,
this time obtained via numerical approximation
using a grid ten times as fine as in the
left column of Figure \ref{breuss-figure-so1}.}
\label{breuss-figure-so2}
\end{figure}

\subsection*{Example 2}

The conservation law generally under consideration is 
\begin{displaymath}
\frac{\partial}{\partial t}
u(x,t) + 
\frac{\partial}{\partial x}
\left( \frac{1}{2} u(x,t)^2 \right) = q_x(x)
\,, 
\quad
x \in \mathbb{R}
\,, \;
t>0
\,, 
\end{displaymath}
which is used featuring different sources and initial conditions
resulting in various difficulties. The source terms in use are
\begin{displaymath}
q(x)
=
\left\lbrace
\begin{array}{cc}
0 \,,
&  
x<-1 \,,\\
\cos^2 \left( \pi x/2 \right)
\,,
& 
-1 \leq x \leq 1 \,,\\
0 \,,
& 1<x 
\end{array}
\right.
\;
\textrm{and}
\;
q(x)
=
\left\lbrace
\begin{array}{cc}
0 \,,
&  
x<-1 \,,\\
-\cos^2 \left( \pi x/2 \right)
\,,
& 
-1 \leq x \leq 1 \,,\\
0 
\,,
&  1<x \,.
\end{array}
\right.
\end{displaymath}
The initial conditions in use are
\begin{eqnarray*}
 &&u(x,0^+)=0,\qquad -\infty<x<\infty\,,\qquad
u(x,0^+)=
\left\lbrace
\begin{array}{cc}
0 \,,
&  
x<-1 \,,\\
1
\,,
& 
-1 \leq x \leq 1 \,,\\
0 
\,,
&  1<x \,,
\end{array}\right.\;
\textrm{and}\\
&&u(x,0^+)=
\left\lbrace
\begin{array}{cc}
0 \,,
&  
x<-1 \,,\\
-1
\,,
& 
-1 \leq x \leq 1 \,,\\
0 
\,,
&  1<x \,.
\end{array}\right.
\end{eqnarray*}

The following four experiments are analogous
to the ones in \cite{grlebano97}, using exactly 
the same grid parameters as initially in \cite{grlebano97}
where later on spatial regridding was used in order to
obtain sharp shock profiles. We use the implicit Lax-Friedrichs method with $\delta x=0.025$ and $\delta t=0.0125$.

In the Figures \ref{breuss-figure-glbn1} and
\ref{breuss-figure-glbn2},
we show in all test cases from top to bottom
the numerical solutions obtained by using our method at times
$t=0.2, \, 0.5, \, 1.0$ and $3.0$ (line featuring small circles)
together with the stationary solution (continuous line). 
Thereby, different experiments correspond to different
columns of pictures.

Concerning the first experiment, the numerical solution
is almost identical to the exact one except at the point $x=0.0$
where a grid point is located exactly on the shock front.
With respect to the other
experiments, the numerical solutions exhibit slightly smeared shocks
while they are otherwise quite accurate. 
Comparing with the numerical results shown in 
\cite{grlebano97}, not employing a regridding
procedure results in slightly smeared shocks. Further numerical 
experiments have shown that we can employ much larger
time steps --- usually of about $20$--$30$ times the one used for the
presented experiments --- without degrading our numerical solution.

\begin{figure}[!ht]
\centering
\includegraphics[height=4.5cm]{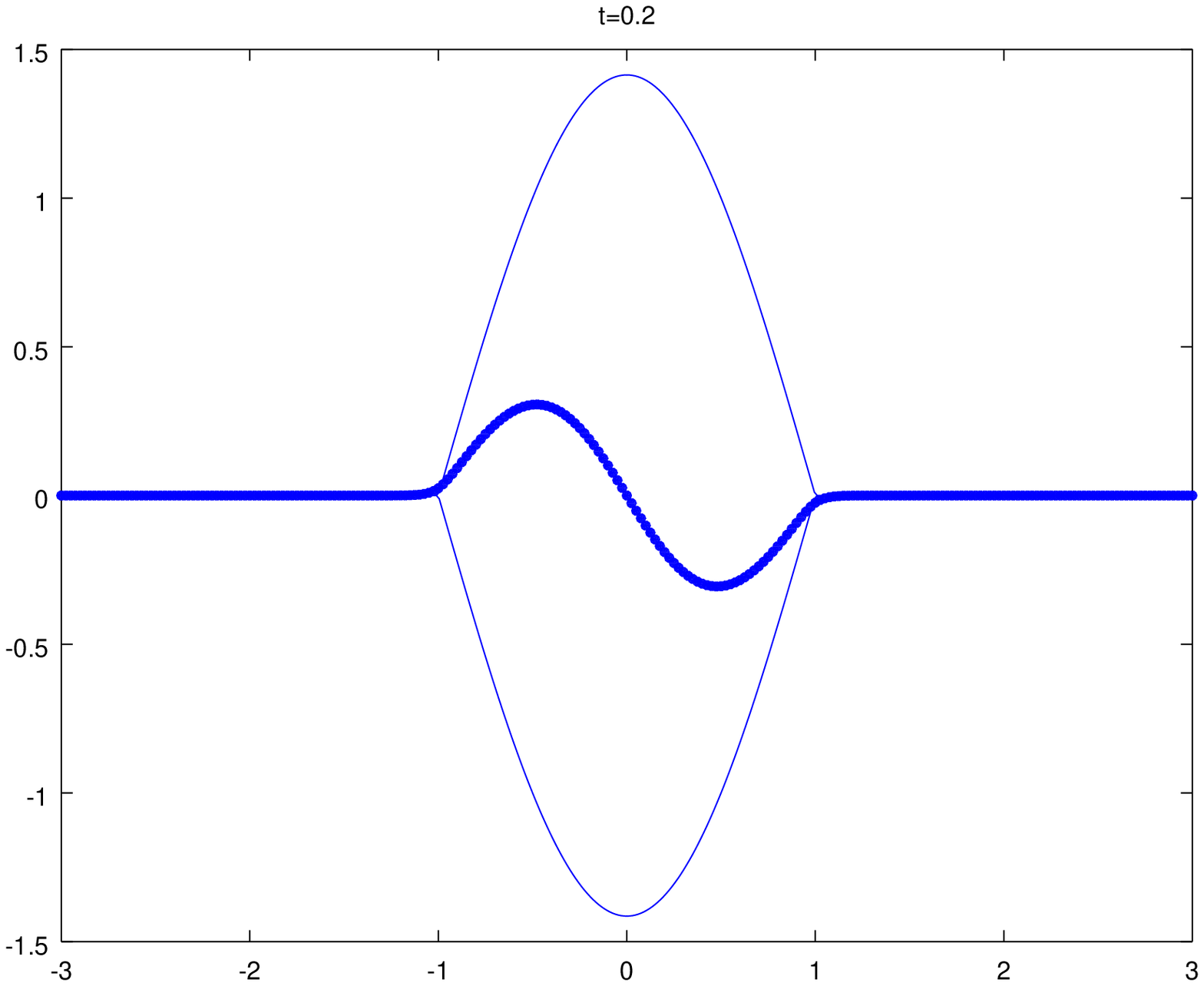}
\includegraphics[height=4.5cm]{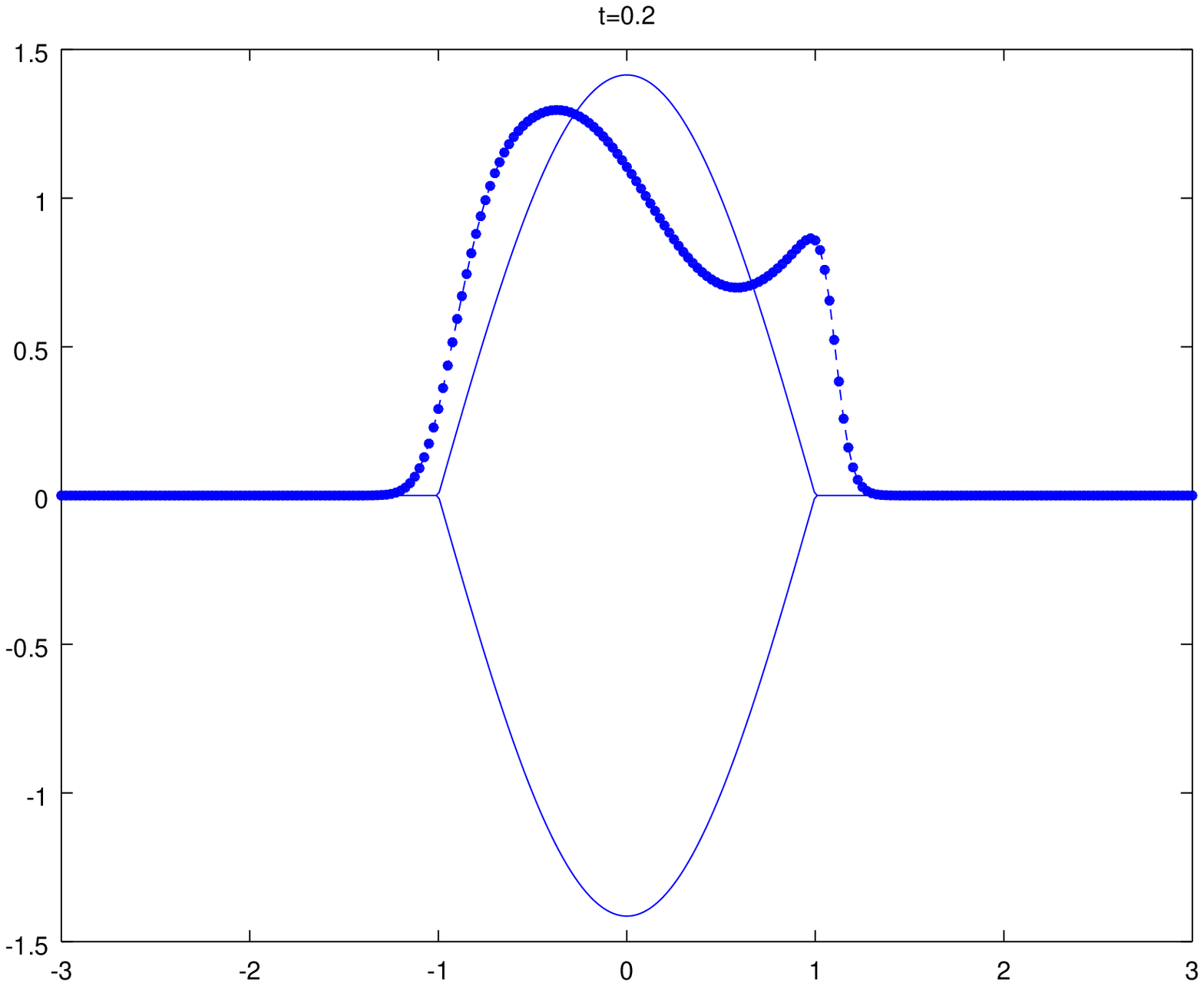}
\includegraphics[height=4.5cm]{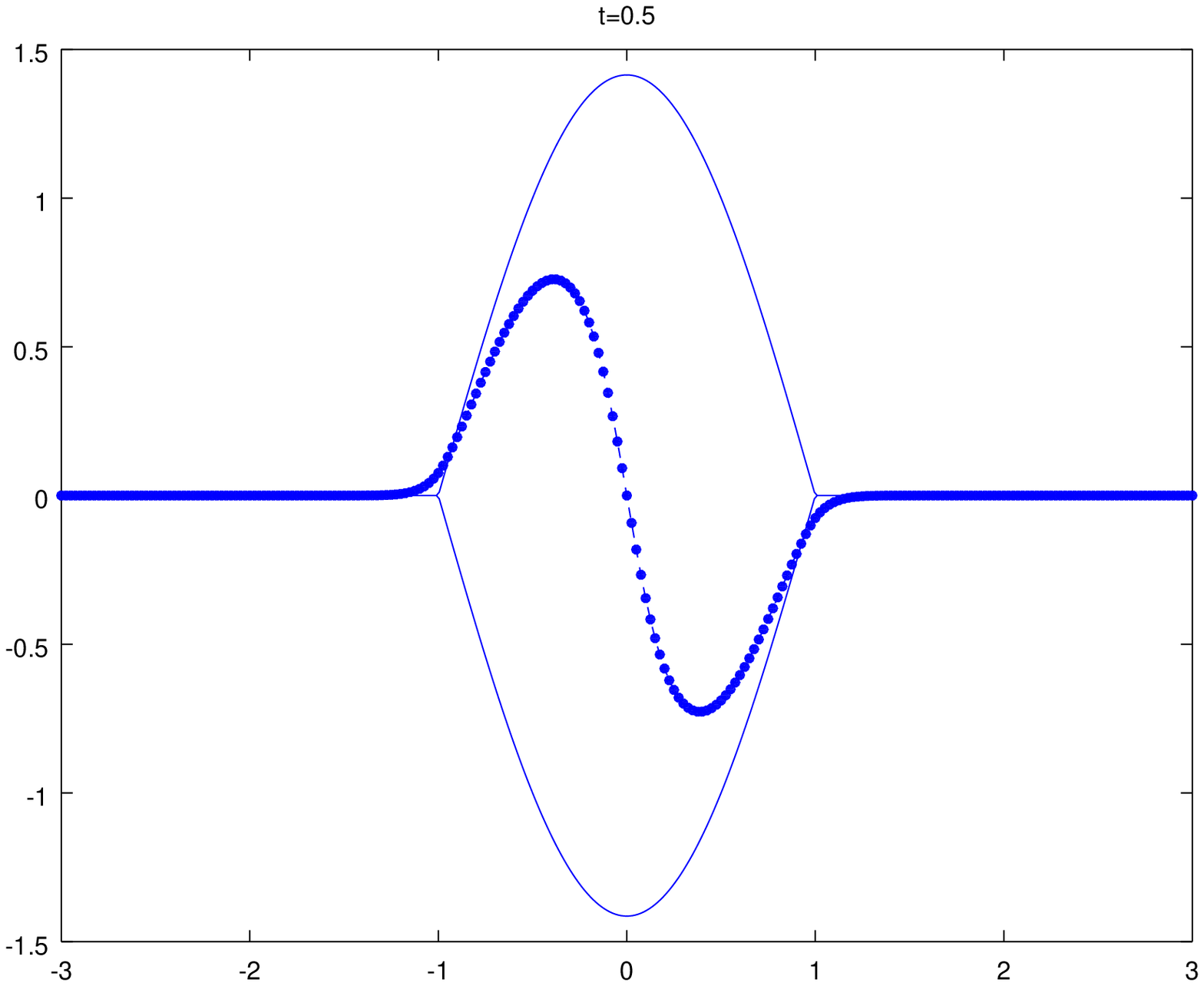}
\includegraphics[height=4.5cm]{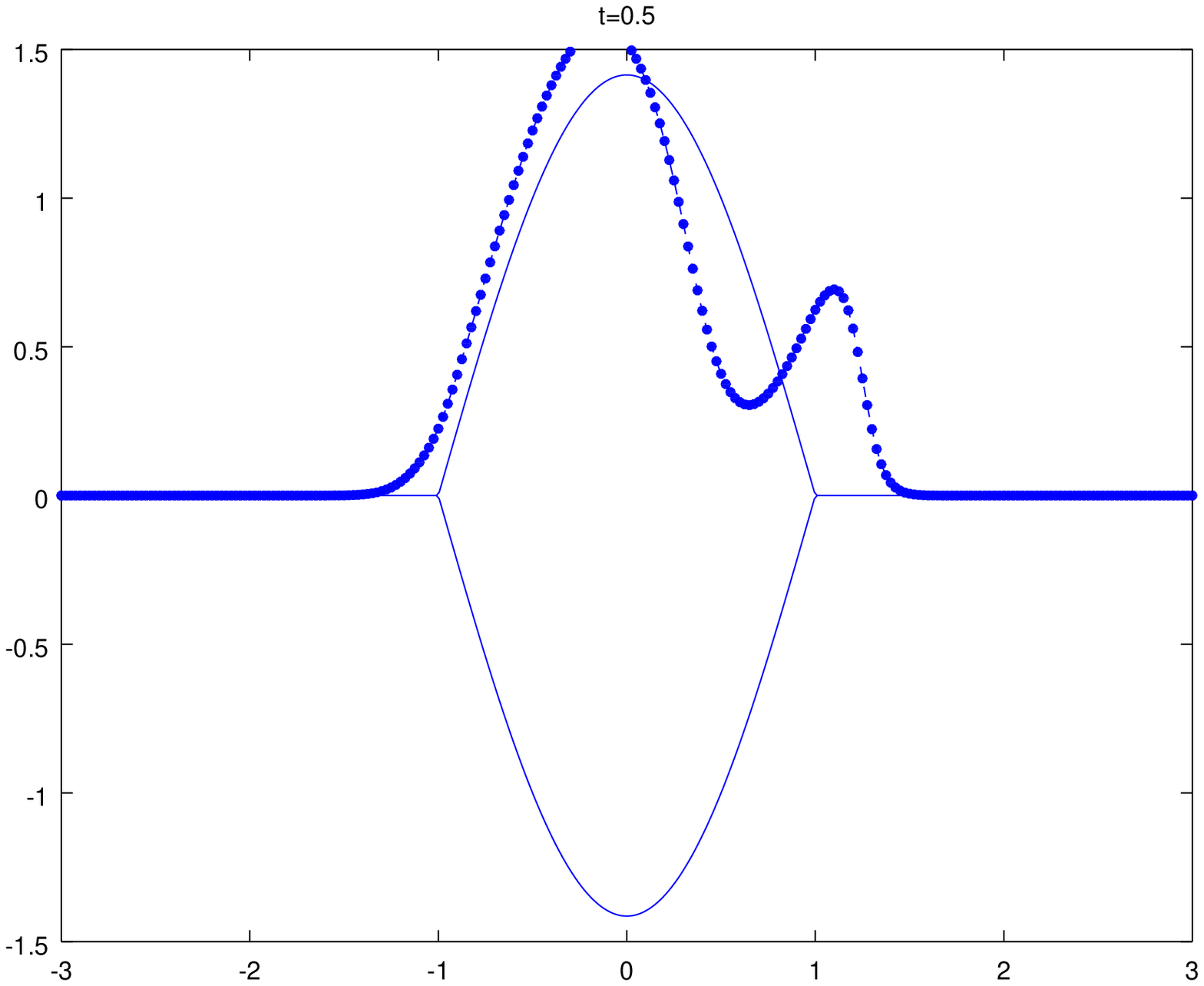}
\includegraphics[height=4.5cm]{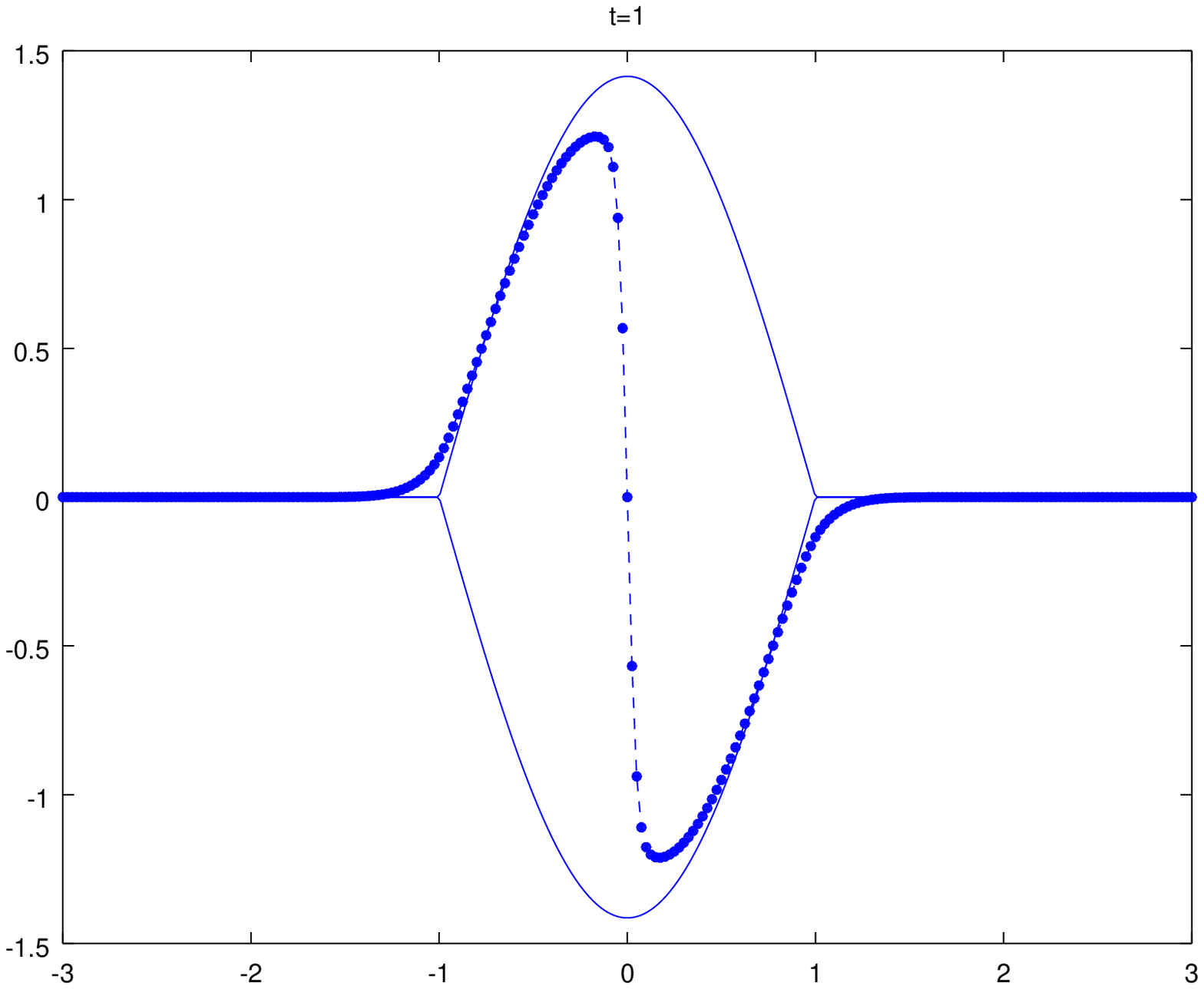}
\includegraphics[height=4.5cm]{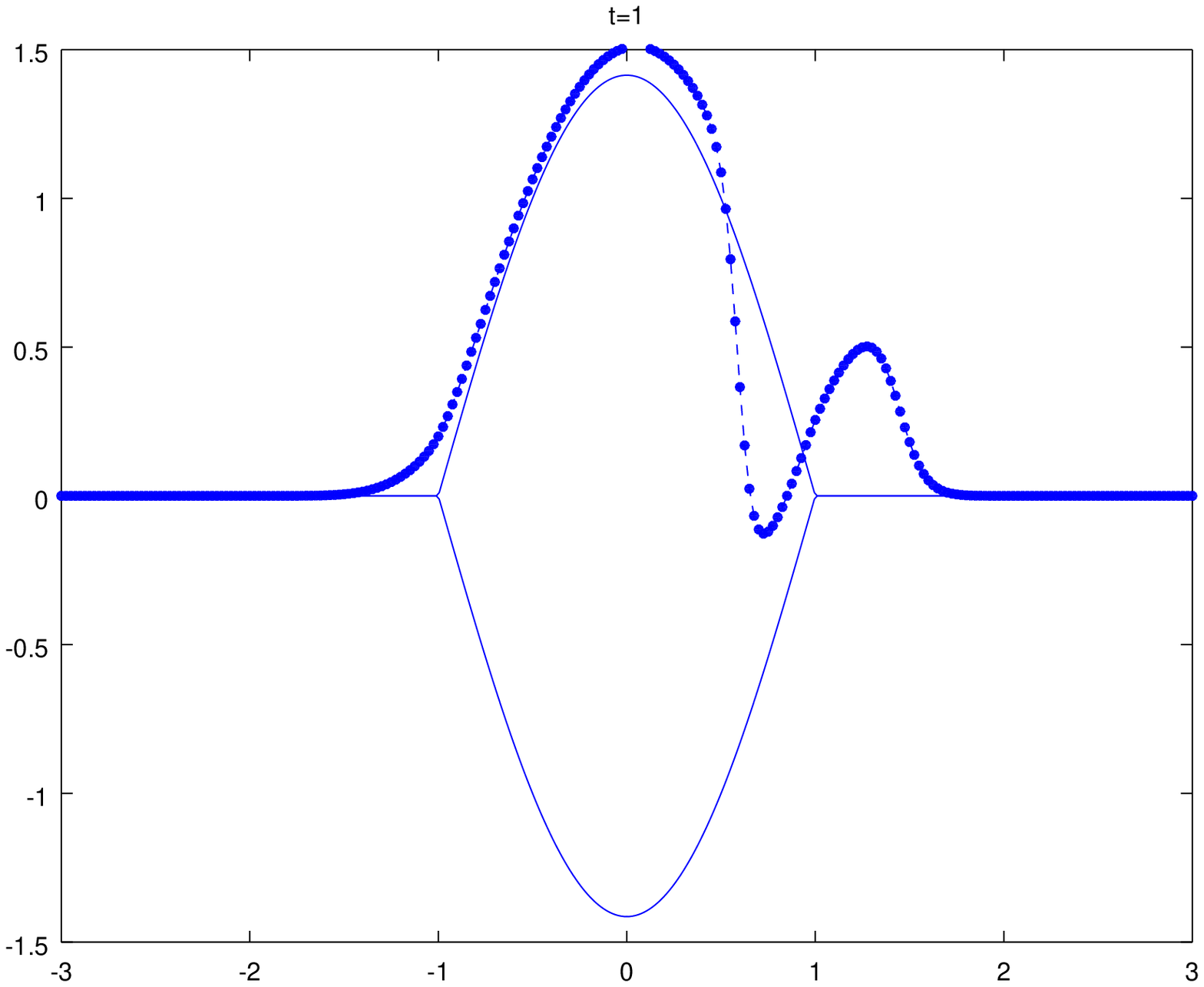}
\includegraphics[height=4.5cm]{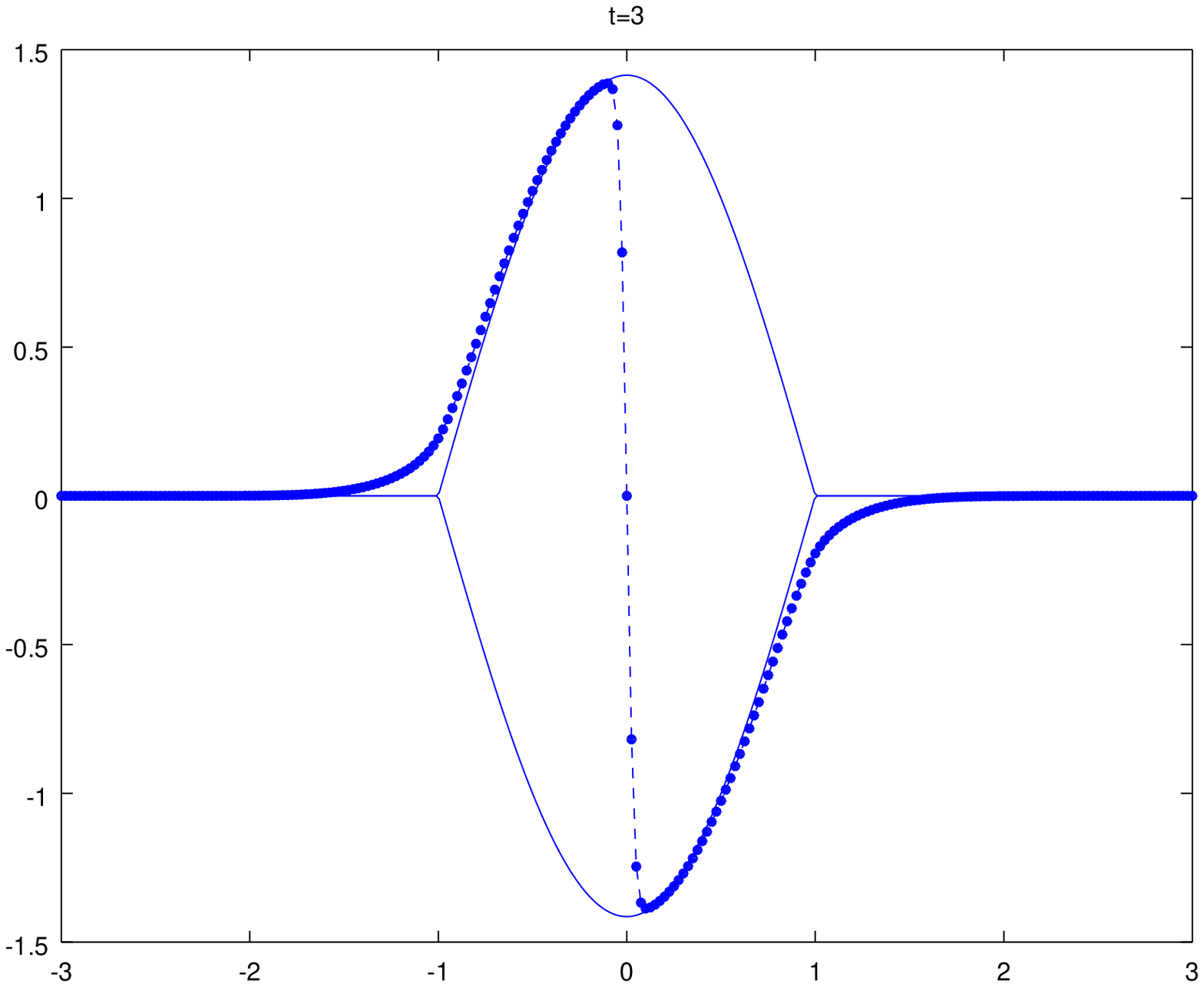}
\includegraphics[height=4.5cm]{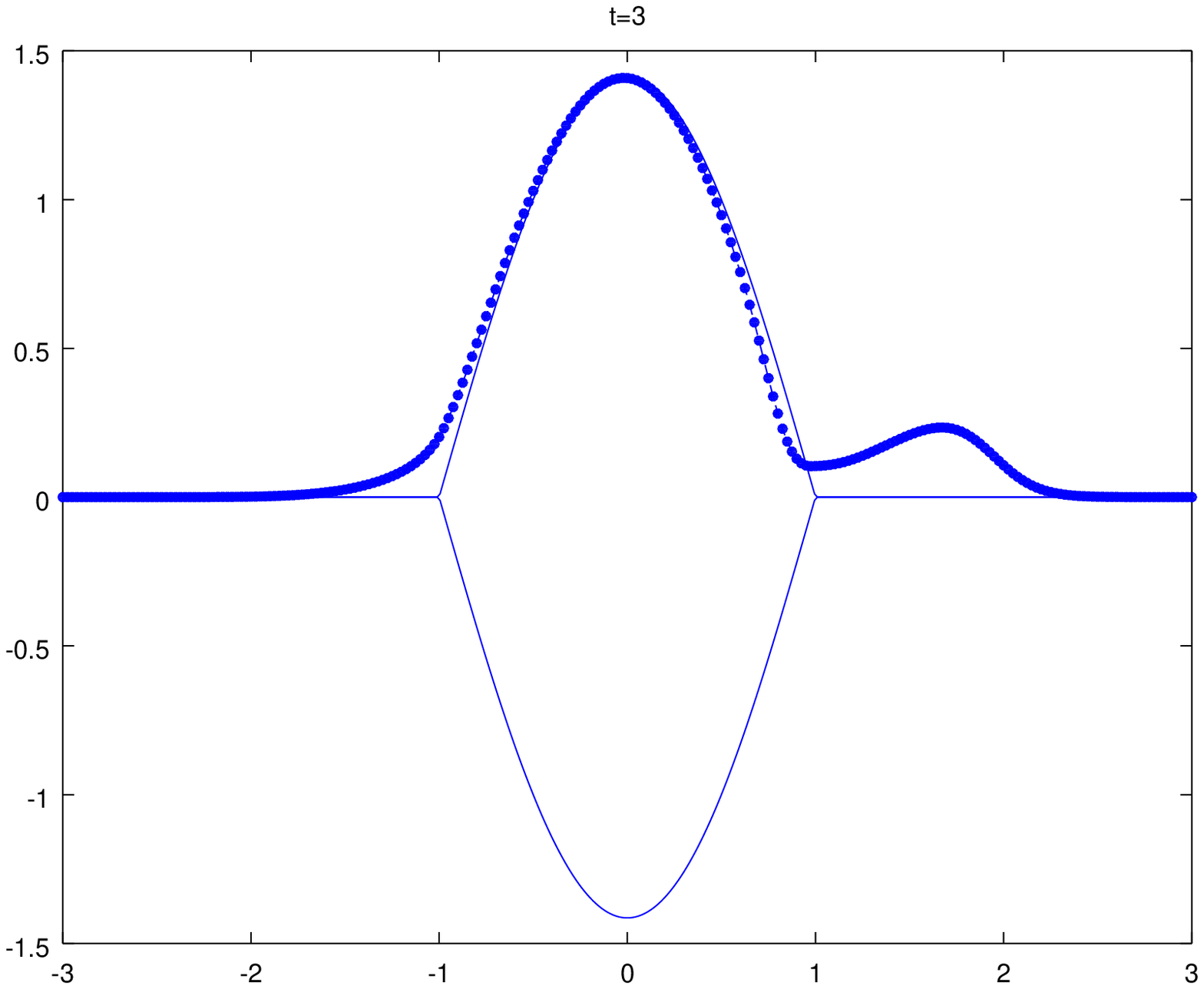}
\caption{Experiments one (left) and two (right).}
\label{breuss-figure-glbn1}
\end{figure}

\begin{figure}[!ht]
\centering
\includegraphics[height=4.5cm]{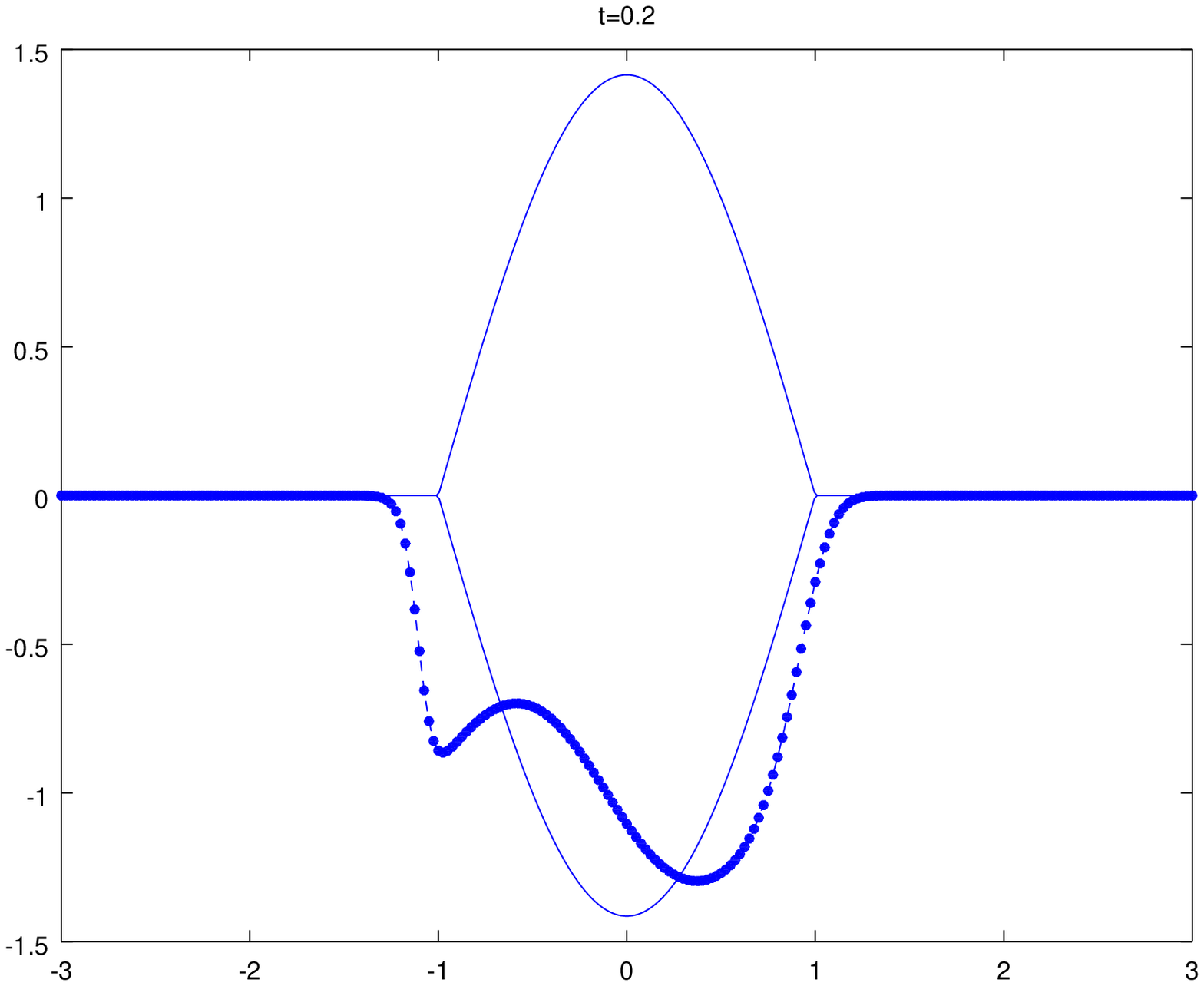}
\includegraphics[height=4.5cm]{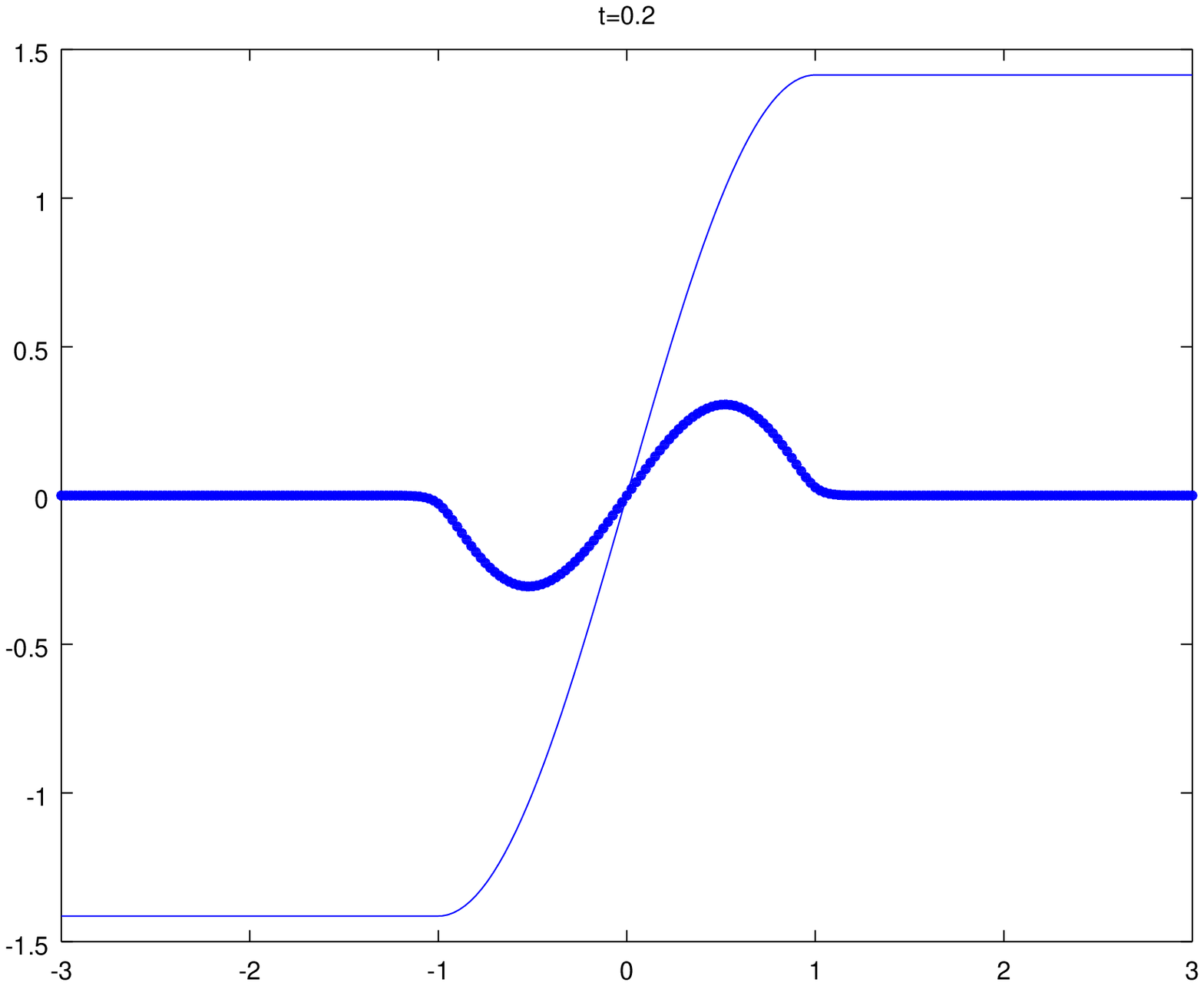}
\includegraphics[height=4.5cm]{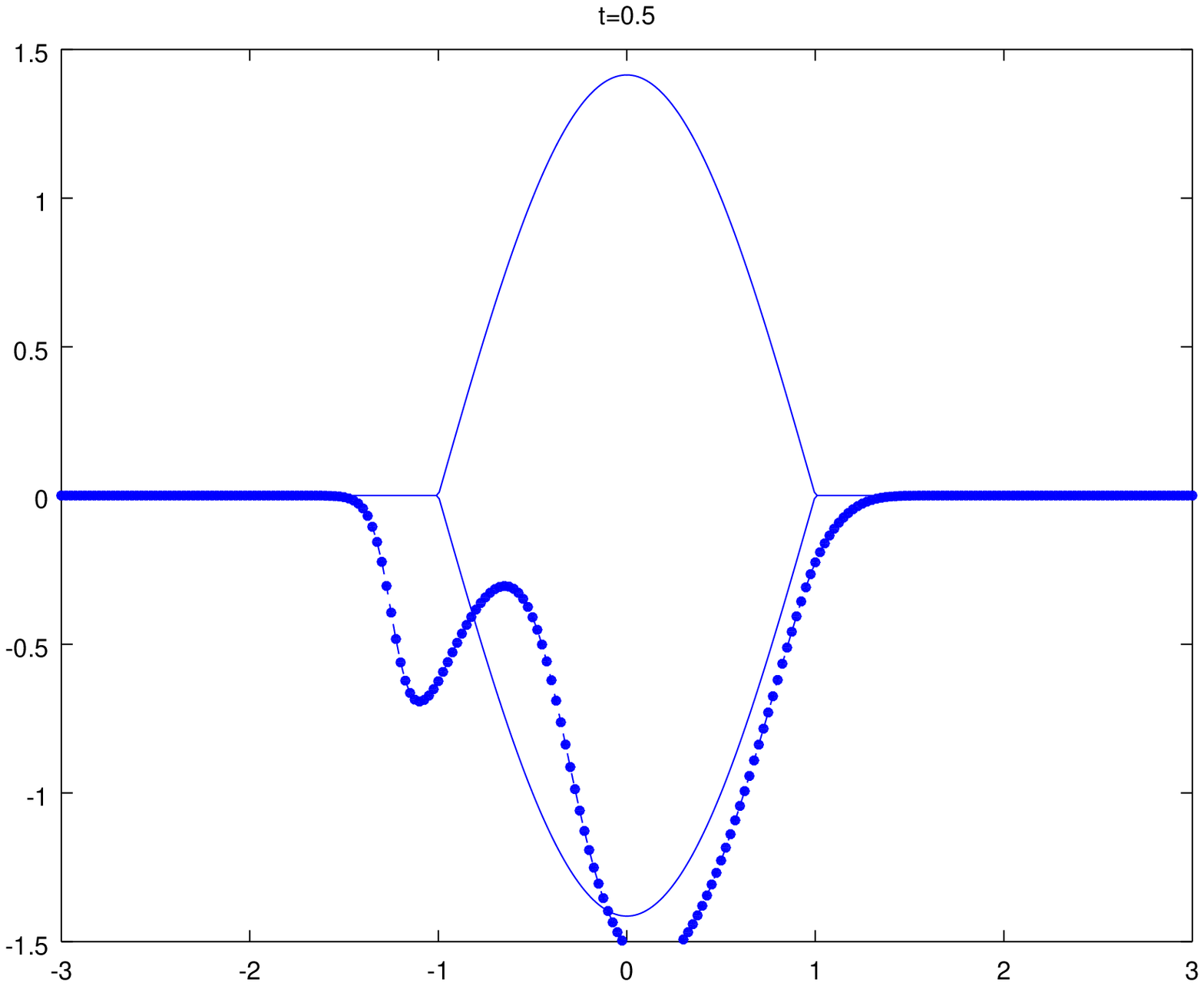}
\includegraphics[height=4.5cm]{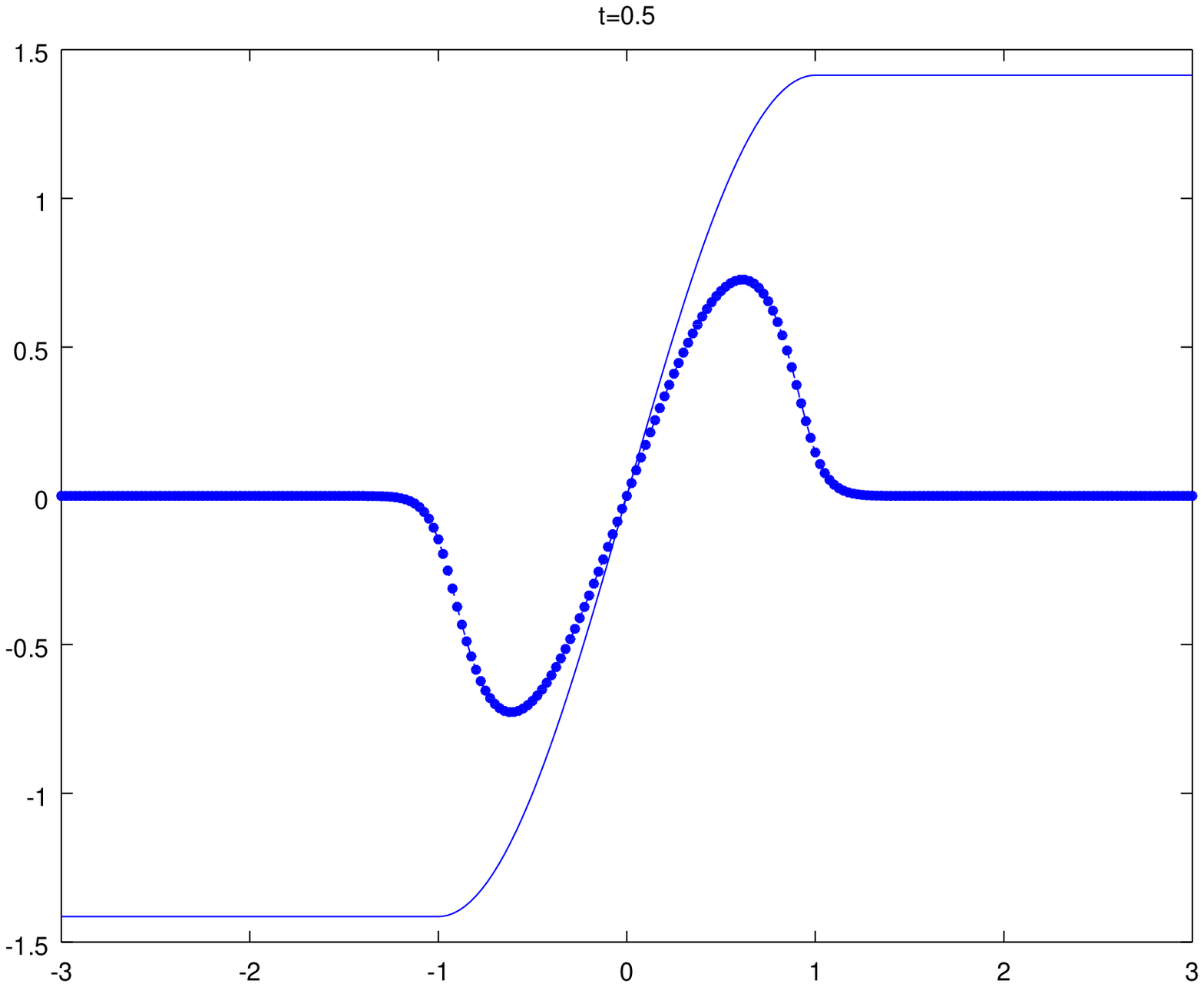}
\includegraphics[height=4.5cm]{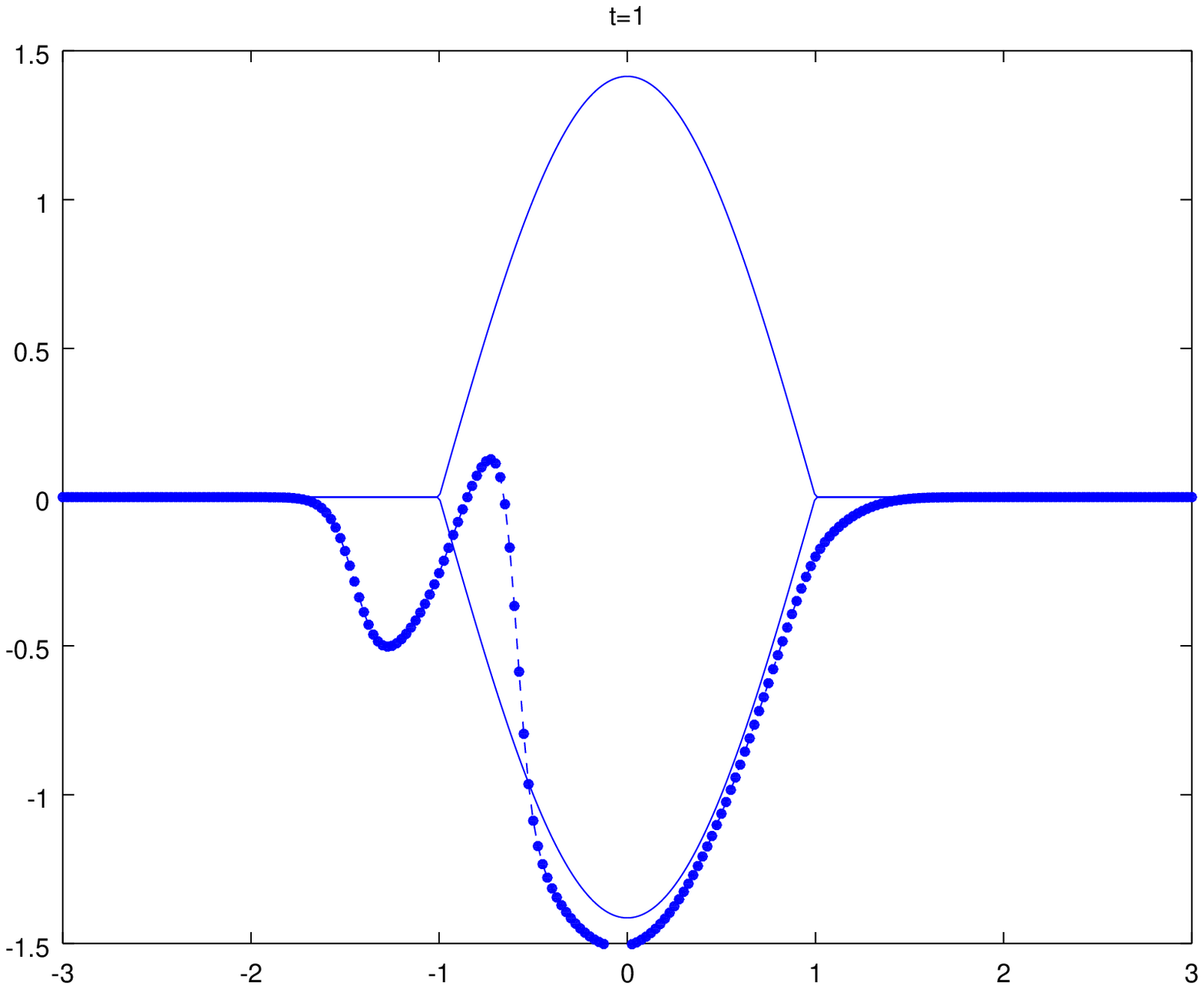}
\includegraphics[height=4.5cm]{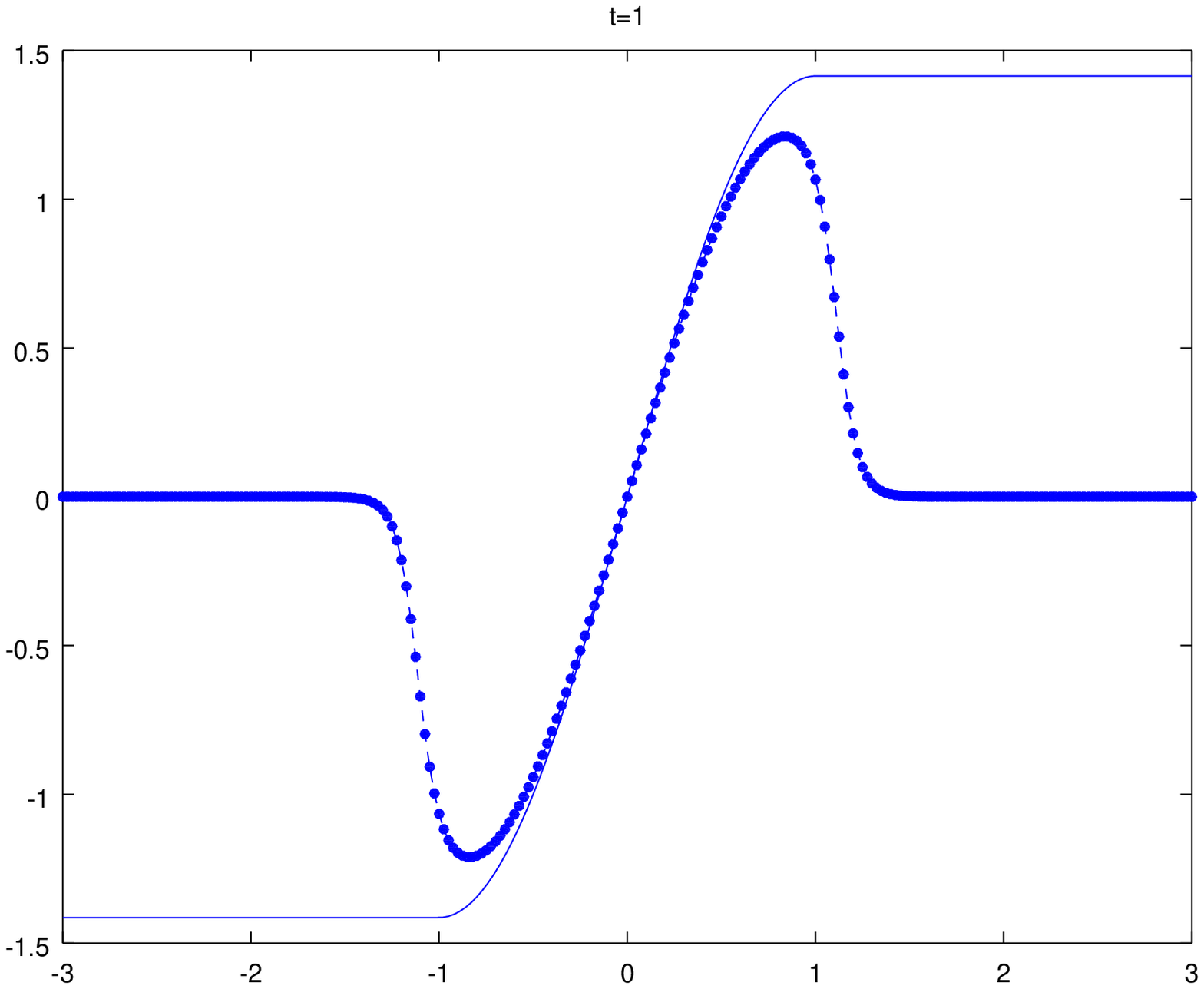}
\includegraphics[height=4.5cm]{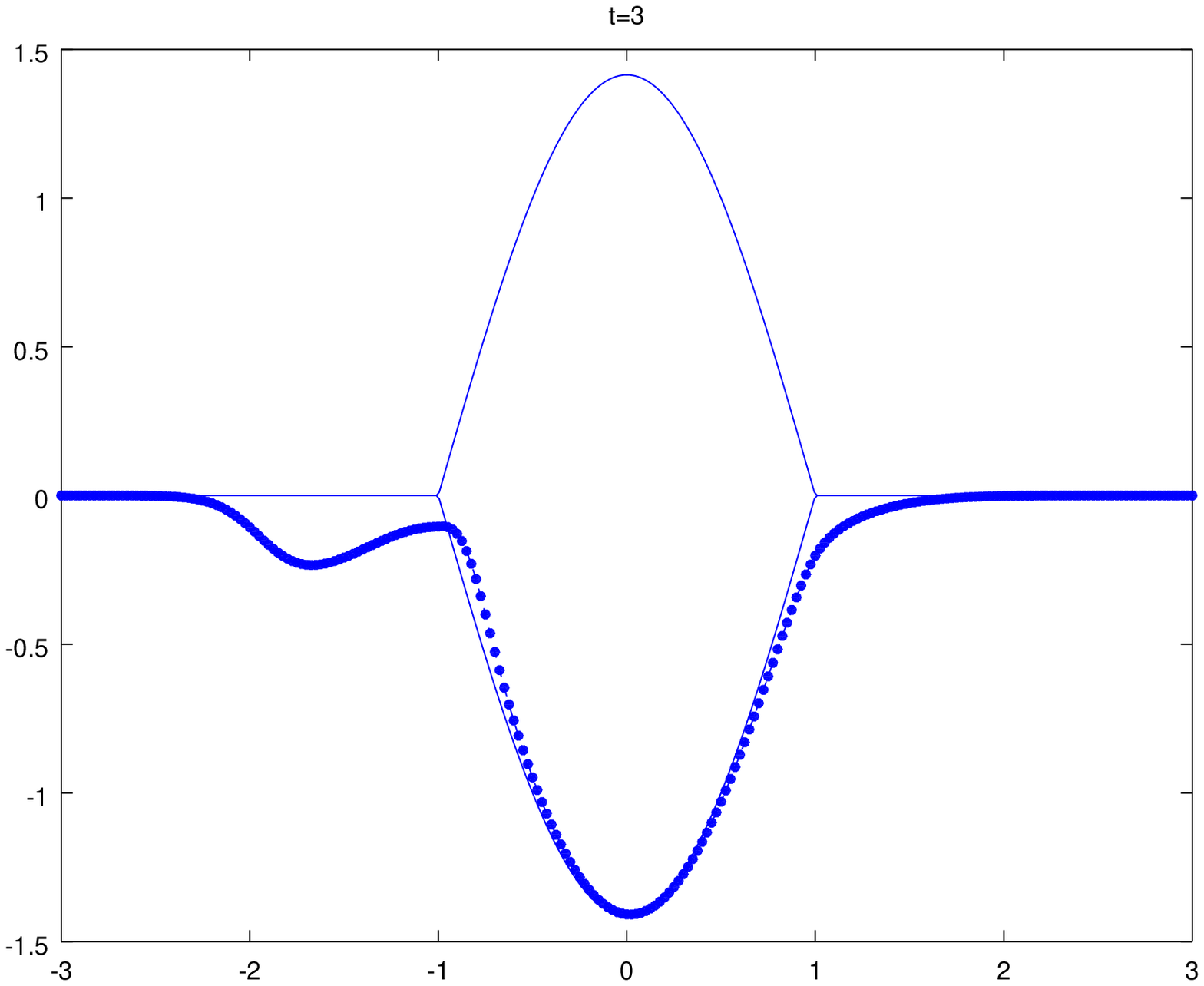}
\includegraphics[height=4.5cm]{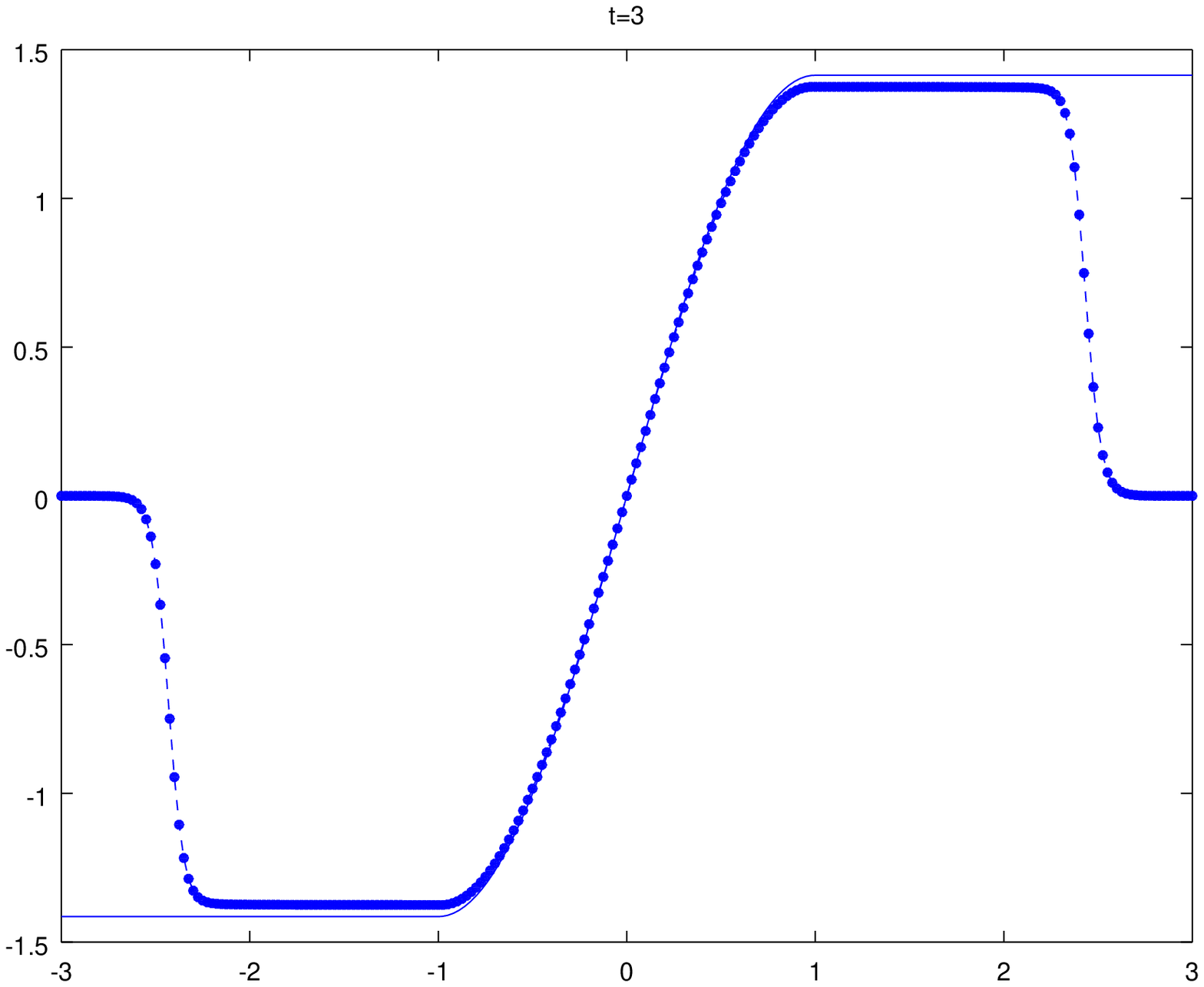}
\caption{Experiments three (left) and four (right).}
\label{breuss-figure-glbn2}
\end{figure}

\subsection*{Example 3}

The conservation law under consideration is 
\begin{displaymath}
\frac{\partial}{\partial t}u(x,t)
+
\frac{\partial}{\partial x} u(x,t)
=
- \mu  u(x,t) \bigl( u(x,t)-1 \bigr) \left( u(x,t)- \frac{1}{2}
\right) \end{displaymath}
which exhibits a nonlinear source term with
an increasingly stiff behavior for $\mu$ growing large. 

Our numerical investigations
are completely analogous to the ones in \cite{levequeyee90}.
The experiments consist of the numerical
solution of a Riemann problem whose
exact solution features a shock front
moving from $x=0.3$ to $x=0.6$ after a couple
of time steps, see Figure \ref{breuss-figure-ly}.
For small and medium $\mu$, the numerical solution shows
the correct behavior incorporating numerical viscosity; note
the sharpening and slight translation
of the shock approximation for $\mu=100$,
an effect of the increasing stiffness of the source term.
For $\mu=1000$ the usual problem is faced, 
see \cite{levequeyee90} for details.
This experiment shows that although the discussed methods
are generally capable to deal with non-linear sources,
they are not recommended without modification for
stiff problems even though they are fully implicit.
Of course, grid refinement results in the 
approximation of the correct solution as expected.

\begin{figure}[!ht]
\centering
\includegraphics[height=4.5cm]{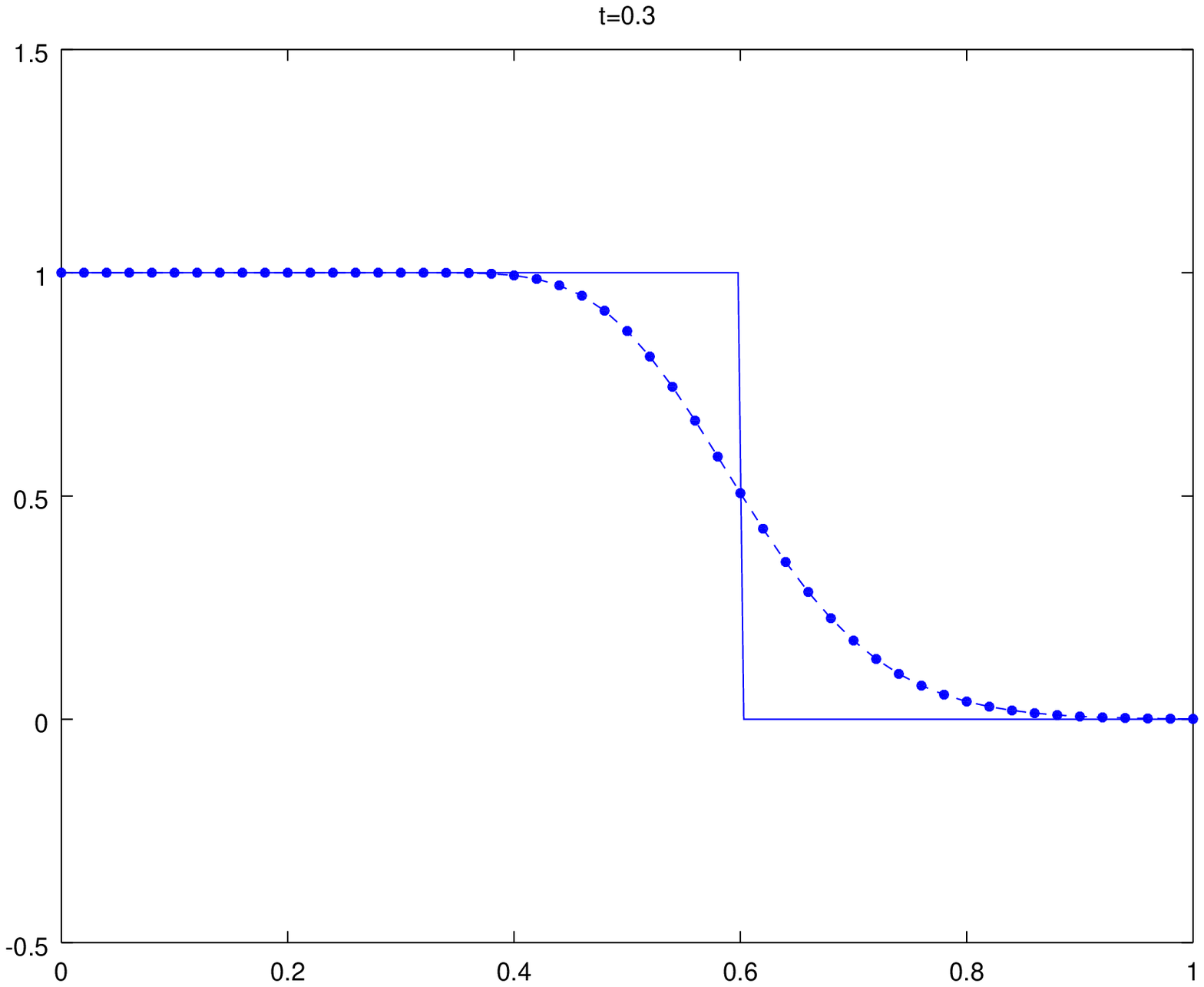}
\includegraphics[height=4.5cm]{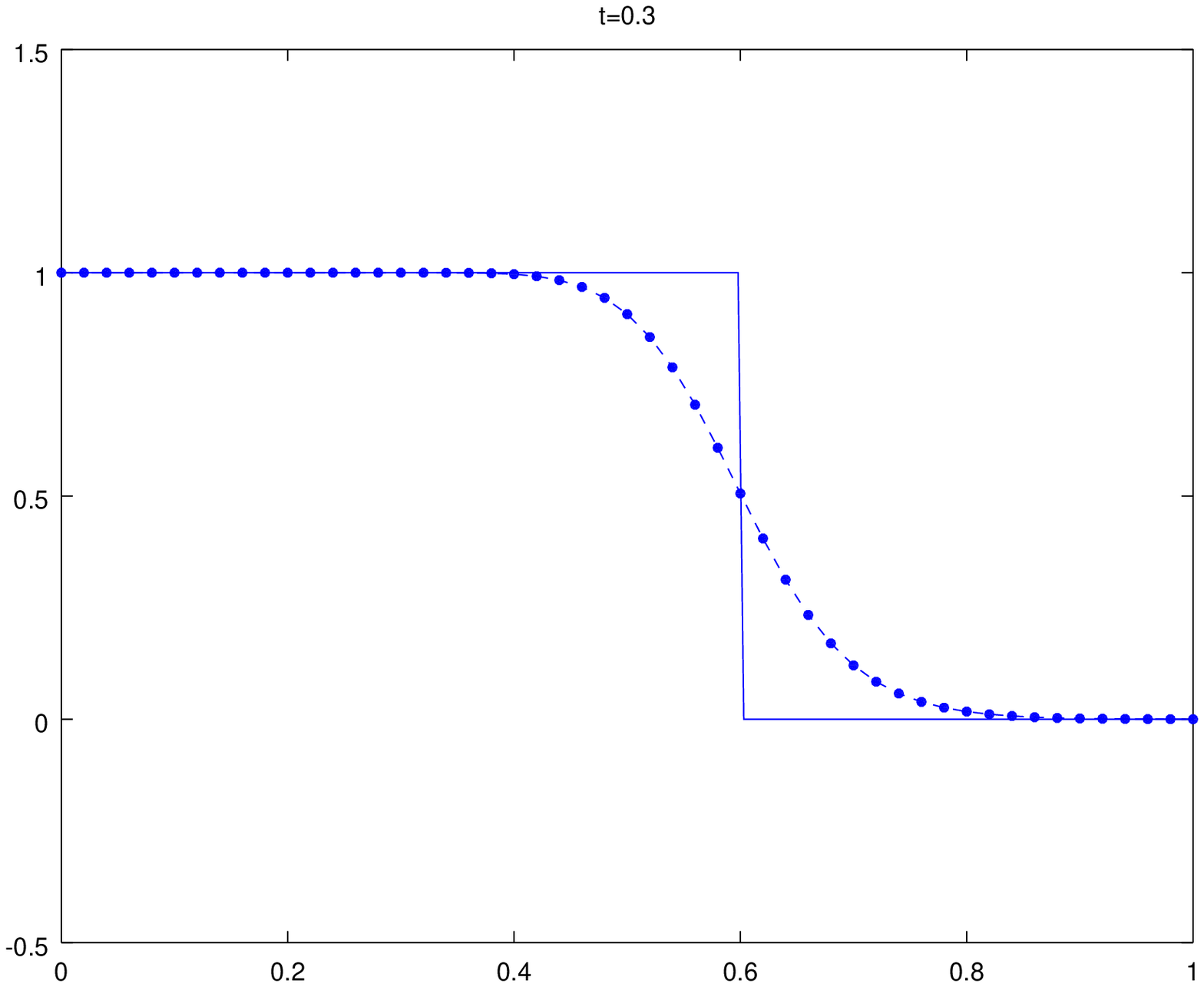}
\includegraphics[height=4.5cm]{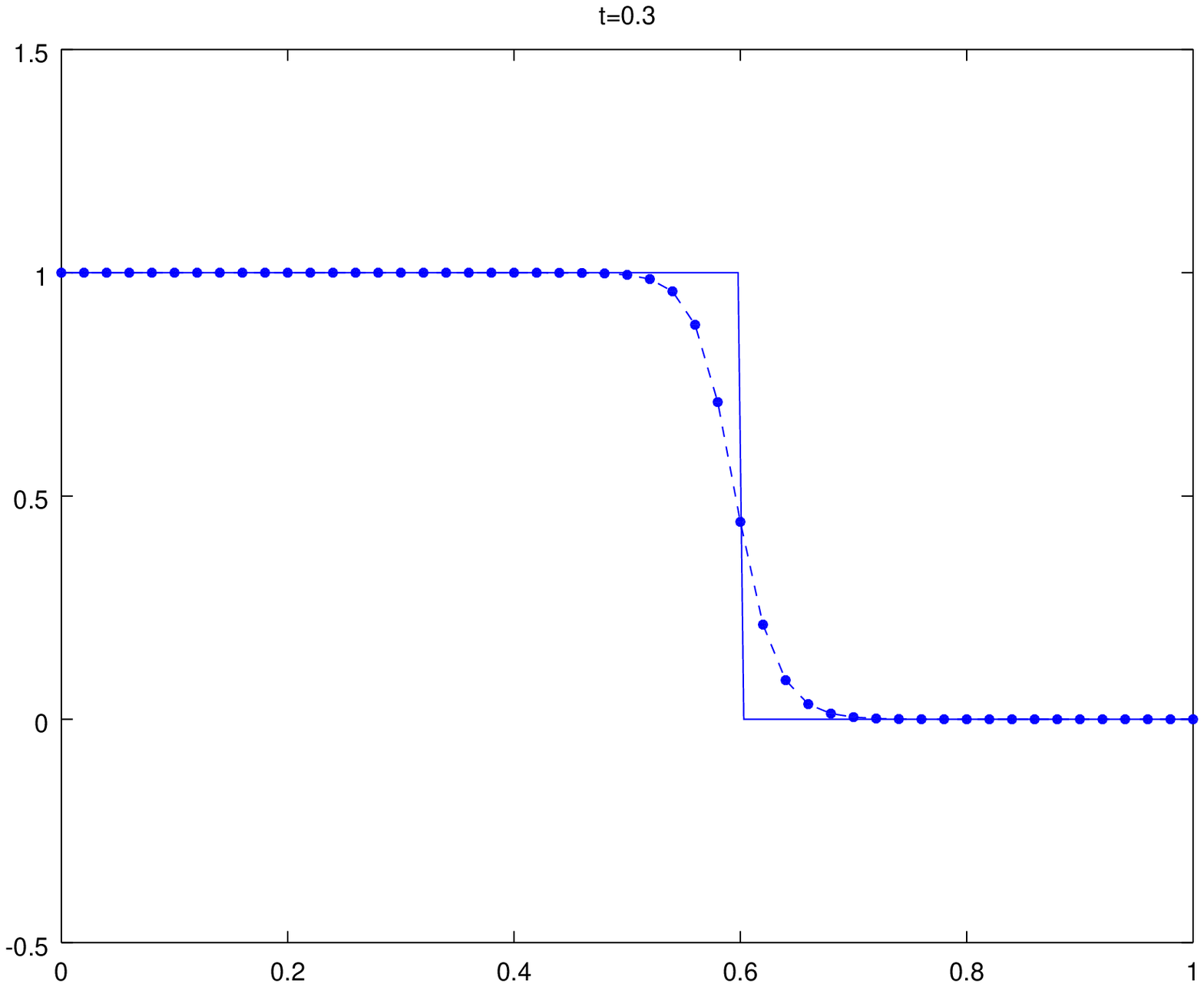}
\includegraphics[height=4.5cm]{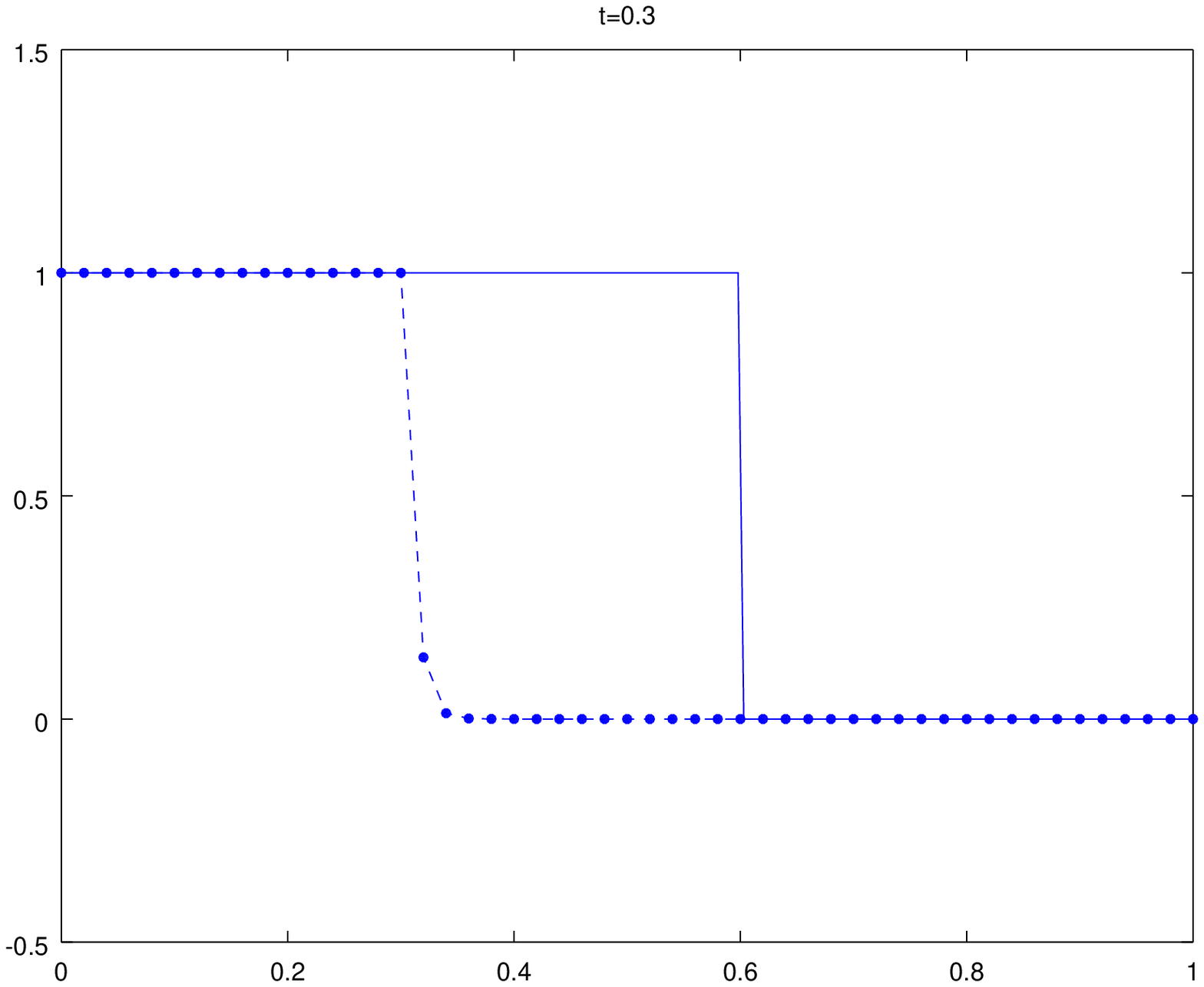}
\caption{Numerical solutions (dashed lines) compared with the exact
solution (continuous lines). The pictures correspond from left to right
and top to bottom to the choices $\mu=1,10,100,1000$.}
\label{breuss-figure-ly}
\end{figure}

\section{Summary and conclusive remarks}
\label{breuss-section-6}

In this paper, we have introduced a new concept for implicit methods for scalar conservation laws in one or more spatial dimensions which may also include source terms of
different type. We developed implicit notions that are centered around a monotonicity criterion and show the relation between a numerical scheme and a discrete entropy inequality.
We investigate in detail three implicit methods and give a convergence proof. Hence, we extend the rigorously verified
range of applicability of those three implicit numerical methods.
By numerical experiments we have shown the validity
and usefulness of our theoretical results.

\bibliographystyle{plain}
\bibliography{bibo}

\end{document}